\newtheorem{theorem-intro}[equation]{Theorem}
\newtheorem{proposition-intro}[equation]{Proposition}
\newtheorem{corollary-intro}[equation]{Corollary}
\newtheorem{question-intro}[equation]{Question}
\newtheorem{problem-intro}[equation]{Problem}
\newtheorem{lemma}{Lemma}[section]
\newtheorem{corollary}[lemma]{Corollary}
\newtheorem{theorem}[lemma]{Theorem}
\newtheorem{proposition}[lemma]{Proposition}
\newtheorem{question}[lemma]{Question}
\theoremstyle{definition}
\newtheorem{definition}[lemma]{Definition}
\newtheorem{notation}[lemma]{Notation}
\theoremstyle{remark}
\newtheorem{remark}[lemma]{Remark}
\newcommand\kk{{\mathbf k}}
\newcommand\N{{\mathbb N}}
\newcommand\id{{\mathrm{id}}}
\newcommand\A{{\mathbb A}}
\newcommand\Z{{\mathbb Z}}
\newcommand\C{{\mathbb C}}
\newcommand\Acal{{\mathcal A}}
\newcommand\Bcal{{\mathcal B}}
\newcommand\Fcal{{\mathcal F}}
\newcommand\Gcal{{\mathcal G}}
\newcommand\Ocal{{\mathcal O}}
\newcommand\Spec{{\mathrm{Spec}}}
\newcommand\SL{{\mathrm{SL}}}
\newcommand\PSL{{\mathrm{PSL}}}
\newcommand\End{{\mathrm{End}}}
\newcommand\GL{{\mathrm{GL}}}
\newcommand\Aut{{\mathrm{Aut}}}
\newcommand\SAut{{\mathrm{SAut}}}
\newcommand\tr{\hbox to 1mm  {${}^t \!  $} }
\DeclareMathOperator{\Stab}{Stab}
\DeclareMathOperator{\Jac}{Jac}
\DeclareMathOperator{\B}{B}
\DeclareMathOperator{\V}{V}
\DeclareMathOperator{\ev}{ev}
\DeclareMathOperator{\mdeg}{mdeg}
\title{On the maximality of the triangular subgroup}
\thanks{The authors gratefully acknowledge support by the Swiss National Science Foundation Grant  "Birational Geometry" PP00P2\_128422 /1 and by the French National Research Agency Grant "BirPol", ANR-11-JS01-004-01.}
\author[J.-P.~Furter]{Jean-Philippe Furter}
\address{Jean-Philippe Furter, Universit\"{a}t Basel, Spiegelgasse $1$, CH-$4051$ Basel, Switzerland\\ On leave from Dpt. of Math., Univ. of La Rochelle, av. Cr\'epeau, 17000 La Rochelle, France}
\email{jpfurter@univ-lr.fr}
\author[P.-M.~Poloni]{Pierre-Marie~Poloni}
\address{Pierre-Marie Poloni \\ Universit\"{a}t Bern \\ Mathematisches Institut \\Sidlerstrasse 5 \\ CH-3012 Bern \\ Switzerland}
\email{pierre.poloni@math.unibe.ch}
\begin{document}

\begin{abstract}
We prove that the subgroup of triangular automorphisms of the complex affine $n$-space is maximal among all solvable subgroups of $\Aut(\A_{\C}^n)$ for every $n$. In particular, it is a Borel subgroup of $\Aut(\A_{\C}^n)$, when the latter is viewed as an ind-group. In dimension two, we prove that the triangular subgroup is a  maximal closed subgroup and that nevertheless, it is not  maximal among all subgroups of $\Aut(\A_{\C}^2)$. Given an automorphism $f$ of $\A_{\C}^2$, we study the question whether the group generated by $f$ and the triangular subgroup is equal to the whole group $\Aut(\A_{\C}^2)$.
\end{abstract}

\maketitle

\section{Introduction}\label{section:intro}

The main purpose of this paper is to study the Jonqui\`eres subgroup $\Bcal_n$ of the group $\Aut(\A_{\C}^n)$ of polynomial automorphisms of the complex affine $n$-space, i.e.\ its subgroup of triangular automorphisms. We will  settle the  titular question by providing three different answers, depending on to which properties the maximality condition is referring to. 

\begin{theorem-intro}  \label{main-theorem}\begin{enumerate}
\item For every $n\geq2$, the subgroup $\Bcal_n$   is maximal among all solvable subgroups of $\Aut(\A_{\C}^n)$.
\item The subgroup  $\Bcal_2$ is  maximal among the closed subgroups of $\Aut(\A_{\C}^2)$. 
\item The subgroup  $\Bcal_2$ is not  maximal among all subgroups of $\Aut(\A_{\C}^2)$.
\end{enumerate}
\end{theorem-intro}

Recall that  $\Aut(\A_{\C}^n)$ is naturally an ind-group, i.e.\ an infinite dimensional algebraic group. It is thus  equipped with the usual ind-topology (see Section~\ref{section:ind-groups} for the definitions). In particular, since $\Bcal_n$ is a closed connected solvable subgroup of  $\Aut(\A_{\C}^n)$,  the first statement of Theorem \ref{main-theorem} can be  interpreted as follows:

\begin{corollary-intro}\label{cor:Borel}The group $\Bcal_n$ is a Borel subgroup of $\Aut(\A_{\C}^n)$.
\end{corollary-intro}

This generalizes a remark of  Berest, Eshmatov and Eshmatov  \cite{BEE}  stating  that triangular automorphisms of $\A^2_{\C}$, of Jacobian determinant  $1$, form a Borel subgroup (i.e.\ a maximal connected solvable subgroup) of the group $\SAut(\A_{\C}^2)$ of polynomial automorphisms of $\A_{\C}^2$ of Jacobian determinant $1$. Actually, the proofs in \cite{BEE} also imply Corollary \ref{cor:Borel} in the case $n=2$. Nevertheless, since they are based on results of Lamy \cite{Lamy}, which use the Jung-van der Kulk-Nagata structure theorem for $\Aut(\A_{\C}^2)$, these arguments are specific  to the dimension $2$ and cannot be generalized to higher dimensions. 

The Jonqui\`eres subgroup of $\Aut(\A_{\C}^n)$ is thus a good analogue of the subgroup of invertible upper triangular matrices, which is a Borel subgroup of the classical linear algebraic group $\GL_n(\C)$. Moreover, Berest, Eshmatov and Eshmatov strengthen this analogy when $n=2$   by proving that $\Bcal_2$ is, up to conjugacy, the only Borel subgroup of $\Aut(\A_{\C}^2)$.  On the other hand, it is well known that there exist, if $n\geq3$, algebraic additive group actions on $\A_{\C}^n$ that cannot be triangularized \cites{Bass,Popov87}. Therefore, we ask the following problem.

\begin{problem-intro}
Show that Borel subgroups of $\Aut(\A_{\C}^n)$ are not all conjugate ($n\geq3$).
\end{problem-intro}  

This problem turns out to be closely related to the question of the  boundedness of the derived length of solvable subgroups of $\Aut(\A_{\C}^n)$. We  give such a bound when $n=2$. More precisely, the maximal derived length of a solvable   subgroup of $\Aut(\A_{\C}^2)$ is equal to $5$ (see Proposition~\ref{prop: bound for the derived length of solvable subgroups of Aut(A2)}). As a  consequence, we prove that the group $\Aut_z(\A^3_{\C})$ of automorphisms of $\A^3$ fixing the last coordinate admits non-conjugate Borel subgroups (see Corollary~\ref{corollary: Aut_z(A^3) admits non-conjugate Borel subgroups}). Note that such a phenomenon has already been pointed out in \cite{BEE}.

\medskip

The paper is organized as follows. Section \ref{section:intro} is the present introduction. In Section \ref{section:ind-groups}, we recall the definitions of ind-varieties and ind-groups given by Shafarevich and explain how the automorphism group of the affine $n$-space may be endowed with the structure of an ind-group.

In Section \ref{section:Borel-subgps},  we prove the first two statements of Theorem \ref{main-theorem} and discuss the question, whether the ind-group $\Aut(\A_{\C}^n)$ does admit non-conjugate Borel subgroups. We then study the group of all automorphisms of $\A_{\C}^3$ fixing the last variable, proving that it admits non-conjugate Borel subgroups. In the last part of Section \ref{section:Borel-subgps}, we give examples of maximal closed subgroups of $\Aut(\A_{\C}^n)$.

 Finally, we consider $\Aut(\A_{\C}^2)$ as an ``abstract'' group in Section \ref{section:dim2}. We show that triangular automorphisms do not form a maximal subgroup of $\Aut(\A_{\C}^2)$.
More precisely, after defining the affine length of an automorphism in Definition~\ref{definition: affine length}, we prove the following statement:

\begin{theorem-intro}
For any field $\kk$, the two following assertions hold.
\begin{enumerate}
\item If the affine length of an automorphism $f \in \Aut (\A^2_{\kk})$ is at least $1$ (i.e. $f$ is not triangular) and at most $4$, then the group generated by $\Bcal_2$ and $f$ satisfies
\[ \langle \Bcal_2, f \rangle =  \Aut (\A^2_{\kk}).\]
\item There exists an automorphism $f \in \Aut (\A^2_{\kk})$ of affine length $5$ such that the group $\langle \Bcal_2, f \rangle$ is strictly included into $\Aut (\A^2_{\kk})$.
\end{enumerate} 
\end{theorem-intro}

\bigskip

{\bf Acknowledgement.} The authors thank the referee for helpful comments and suggestions which helped to improve the text.

%%%%%%%%%%%%%%%%%%%%%%%%%%%%%%%%%%%%%%%%%%%%%%%%%%%%%

\section{Preliminaries: the ind-group of polynomial automorphisms}  \label{section:ind-groups}

%%%%%%%%%%%%%%%%%%%%%%%%%%%%%%%%%%%%%%%%%%%%%%%%%%%%%%%

In \cites{Shafarevich1, Shafarevich2}, Shafarevich introduced the notions of ind-varieties and ind-groups, and explained how to endow the group  of polynomial automorphisms of the affine $n$-space with the structure of an ind-group. Since these two papers are well-known to contain several inaccuracies, we now recall the definitions from Shafarevich and describe the ind-group structure   of the automorphism group of the affine $n$-space.

For simplicity, we  assume in this section that $\kk$ is an algebraically closed field.

\subsection{Ind-varieties and ind-groups}

We first define the category of infinite dimensional algebraic varieties (\emph{ind-varieties} for short).

\begin{definition}[Shafarevich, 1966]\hfill
\begin{enumerate}
\item
An ind-variety $V$ (over $\kk$) is a set together with an ascending filtration $V_{\leq \, 0} \subseteq V_{\leq \, 1} \subseteq V_{\leq \, 2 }  \subseteq \cdots  \subseteq V$ such that the following holds:
\begin{enumerate}
\item $V= \bigcup_d V_{\leq \, d}$.
\item Each $V_{\leq \, d}$ has the structure of an algebraic variety (over $\kk$).
\item Each $V_{\leq \, d}$ is Zariski closed in $V_{\leq \, d+1}$.
\end{enumerate}

\item A morphism of ind-varieties  (or ind-morphism)  is a map $\varphi \colon V \to W$ between two ind-varieties $V= \bigcup_d V_{\leq \, d}$ and $W= \bigcup_d W_{\leq \, d}$ such that  there exists, for every $d$, an $e$ for which $\varphi (V_{\leq \, d} ) \subseteq W_{\leq \, e}$ and such that the induced map $V_{\leq \, d} \to W_{\leq \, e}$ is a morphism of varieties (over $\kk$).
\end{enumerate}
\end{definition}

In particular, every ind-variety $V$ is naturally equipped with the so-called ind-topology in which a subset $S \subseteq V$ is closed if and only if every subset $S_{\leq \, d} := S \cap V_{\leq \, d}$ is Zariski-closed in $V_{\leq \, d}$.

We remark that the product $V\times W$ of two ind-varieties  $V= \bigcup_d V_{\leq \, d}$ and $W= \bigcup_d W_{\leq \, d}$ has the structure of an ind-variety for the filtration $V\times W= \bigcup_d V_{\leq \, d}\times W_{\leq \, d}$. 

\begin{definition}An \emph{ind-group} is a group $G$ which is an ind-variety such that the multiplication $G\times G\to G$ and inversion $G\to G$ maps are morphisms of ind-varieties. 
\end{definition}

If $G$ is an abstract group, we denote by $D(G) = D^1(G)$  its (first) derived subgroup. It is the subgroup generated by all commutators $[g,h] := ghg^{-1}h^{-1}$, $g,h \in G$. The $n$-th derived subgroup of $G$ is then defined inductively by $D^n(G) = D^1 ( D^{n-1}(G))$ for $n \geq 1$, where by definition $D^0 (G) = G$. A group $G$ is called \emph{solvable} if $D^n(G) = \{ 1 \}$ for some integer $n \geq 0$. Furthermore, the smallest such integer $n$ is called the \emph{derived length} of $G$.

For later use, we  state (and prove) the following results which are well-known for algebraic groups and which extend straightforwardly to ind-groups.

\begin{lemma} \label{lemma: the closure of a (solvable) subgroup is a (solvable) subgroup}
Let $H$ be a subgroup of an ind-group $G$. Then, the following assertions hold.
\begin{enumerate}
\item \label{the closure of a  subgroup is a subgroup} The closure $\overline{H}$ of $H$ is again a subgroup of $G$.
\item \label{the derivation of the closure is included into the closure of the derivation} We have $D (\overline{H} ) \subseteq \overline{ D(H) }$.
\item \label{the closure of a solvable subgroup is a solvable subgroup} If $H$ is solvable, then $\overline{H}$ is solvable too.
\end{enumerate}
\end{lemma}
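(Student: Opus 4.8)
The plan is to prove the three assertions in order, each building on the previous. These are standard facts for topological groups equipped with a reasonable topology, and the proof reduces to checking that the ind-topology on $G$ is compatible with the group operations, which is exactly the content of the definition of an ind-group.

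For assertion \eqref{the closure of a subgroup is a subgroup}, I would first fix $h \in H$ and consider the left-translation map $L_h \colon G \to G$, $g \mapsto hg$. Since multiplication $G \times G \to G$ is an ind-morphism and $h$ is fixed, $L_h$ is a homeomorphism of $G$ for the ind-topology (with inverse $L_{h^{-1}}$). Because $L_h(H) = H$ (as $H$ is a subgroup), applying $L_h$ to the inclusion $H \subseteq \overline{H}$ and using that $L_h$ is a homeomorphism gives $h \overline{H} = L_h(\overline{H}) = \overline{L_h(H)} = \overline{H}$ for every $h \in H$. Thus $H \overline{H} \subseteq \overline{H}$. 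Next I would fix $g \in \overline{H}$ and consider the right-translation $R_g \colon x \mapsto xg$, again a homeomorphism. The relation just obtained says $H g \subseteq \overline{H}$ for every $g \in \overline{H}$, i.e.\ $R_g(H) \subseteq \overline{H}$; since $\overline{H}$ is closed and $R_g$ is a homeomorphism, $R_g(\overline{H}) = \overline{R_g(H)} \subseteq \overline{\overline{H}} = \overline{H}$, so $\overline{H}\,\overline{H} \subseteq \overline{H}$. Finally, inversion $\iota \colon G \to G$ is a homeomorphism with $\iota(H) = H$, whence $\iota(\overline{H}) = \overline{\iota(H)} = \overline{H}$, so $\overline{H}$ is closed under inverses. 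Together these show $\overline{H}$ is a subgroup.

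For assertion \eqref{the derivation of the closure is included into the closure of the derivation}, the natural tool is the commutator map. Fix $h \in H$ and consider $c_h \colon G \to G$, $g \mapsto [h,g] = hgh^{-1}g^{-1}$, which is an ind-morphism (built from multiplication and inversion with one argument fixed), hence continuous. For $g \in H$ we have $c_h(g) = [h,g] \in D(H)$, so $c_h(H) \subseteq D(H) \subseteq \overline{D(H)}$; by continuity $c_h(\overline{H}) \subseteq \overline{c_h(H)} \subseteq \overline{D(H)}$, giving $[h, \overline{H}] \subseteq \overline{D(H)}$ for every $h \in H$. Now fix $g \in \overline{H}$ and vary the first argument: the map $h \mapsto [h,g]$ is again continuous, and the previous step shows it sends $H$ into $\overline{D(H)}$, so by continuity it sends $\overline{H}$ into $\overline{\overline{D(H)}} = \overline{D(H)}$. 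Hence $[\,\overline{H}, \overline{H}\,] \subseteq \overline{D(H)}$. Since $\overline{D(H)}$ is a subgroup by part \eqref{the closure of a  subgroup is a subgroup} and $D(\overline{H})$ is generated by these commutators, we conclude $D(\overline{H}) \subseteq \overline{D(H)}$.

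Assertion \eqref{the closure of a solvable subgroup is a solvable subgroup} then follows by a short induction from \eqref{the derivation of the closure is included into the closure of the derivation}. The claim is that $D^n(\overline{H}) \subseteq \overline{D^n(H)}$ for all $n$: indeed, assuming it for $n$, we have $D^{n+1}(\overline{H}) = D(D^n(\overline{H})) \subseteq D(\overline{D^n(H)}) \subseteq \overline{D^{n+1}(H)}$, where the first inclusion uses monotonicity of $D$ applied to the inductive hypothesis and the second is \eqref{the derivation of the closure is included into the closure of the derivation} applied to the subgroup $D^n(H)$. If $H$ is solvable of derived length $n$, then $D^n(H) = \{1\}$, so $D^n(\overline{H}) \subseteq \overline{\{1\}} = \{1\}$ (the singleton is closed, as points are closed in the ind-topology), and $\overline{H}$ is solvable. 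I expect the main technical point to be the careful ``fix one argument and use continuity twice'' argument in parts \eqref{the closure of a  subgroup is a subgroup} and \eqref{the derivation of the closure is included into the closure of the derivation}: one must treat the two arguments of multiplication and of the commutator separately, since continuity is invoked one variable at a time via translations, rather than appealing directly to joint continuity of the full operation.
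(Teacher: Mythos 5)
Your proof is correct and follows essentially the same route as the paper: translations and inversion as homeomorphisms for part (1), the "fix one argument, use continuity, then fix the other" argument with the commutator map for part (2), and an induction on derived subgroups for part (3). The only cosmetic difference is that you phrase part (3) as $D^n(\overline{H})\subseteq\overline{D^n(H)}$ directly, whereas the paper runs the same induction through a chain of subgroups $H_i$ with $D(H_i)\subseteq H_{i+1}$; the content is identical.
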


\begin{proof}

(\ref{the closure of a  subgroup is a subgroup}). The proof for algebraic groups given in \cite[Proposition 7.4A, page 54]{Hum81} directly applies to ind-groups. This proof being very short, we  give it here. Inversion being a homeomorphism, we get $(\overline{H})^{-1} = \overline{ H^{-1} } = \overline{H}$. Similarly, left translation by an element $x$ of $H$ being a homeomorphism, we get $x \overline{H} = \overline{xH} = \overline{H}$, i.e. $H \overline{H} \subseteq \overline{H}$. In turn, right translation by an element $x$ of $\overline{H}$ being a homeomorphism, we get $\overline{H} x= \overline{Hx} \subseteq \overline{H \overline{H}} \subseteq \overline{ \overline{H}} = \overline{H}$. This says that $\overline{H}$ is a subgroup.

(\ref{the derivation of the closure is included into the closure of the derivation}). Fix an element $y$ of $H$. The map $\varphi \colon G \to G$, $x \mapsto [x,y] = xyx^{-1}y^{-1}$ being an ind-morphism, it is in particular continuous. Since $H$ is obviously contained in $\varphi^{-1} ( \overline{D(H)} )$, we get $\overline{H} \subseteq \varphi^{-1} ( \overline{D(H)} )$. Consequently,   we have proven that
\[ \forall x \in \overline{H}, \; \forall y \in H, \; [x,y] \in \overline{D(H)}.\]
In turn (and analogously), for each fixed element $x$ of $\overline{H}$, the map $\psi \colon G \to G$, $y \mapsto [x,y]$ is continuous. Since $H$ is included into  $\psi^{-1} ( \overline{D(H)} )$, we get $\overline{H} \subseteq \psi^{-1} ( \overline{D(H)} )$ and thus
\[ \forall x,y \in \overline{H},  \; [x,y] \in \overline{D(H)}.\]
This  implies the desired inclusion.

(\ref{the closure of a solvable subgroup is a solvable subgroup}). If $H$ is solvable, it admits a sequence of subgroups such that
\[ H= H_0 \supseteq H_1 \supseteq \cdots \supseteq H_n = \{ 1 \} \quad \text{and} \quad D(H_i) \subseteq H_{i+1} \text{ for each } i.\]
This yields $\overline{H}= \overline{H}_0 \supseteq \overline{H}_1 \supseteq \cdots \supseteq \overline{H}_n = \{ 1 \}$ and by (\ref{the derivation of the closure is included into the closure of the derivation}) we get 
$ D( \overline{H}_i) \subseteq \overline{D( H_i)  } \subseteq \overline{H}_{i+1}$ for each $i$.
\end{proof}

%%%%%%%%%%%%%%%%%%%%%%%%%%%%%%%%%%%%%%%%%%%%%%%%%%%%%

\subsection{Automorphisms of the  affine $n$-space}

As usual, given an endomorphism $f\in\End(\A^n_{\kk})$, we denote by $f^*$ the corresponding endomorphism of the algebra of regular functions ${\mathcal O} (\A_{\kk}^n) = \kk [x_1 , \ldots ,  x_n]$. Note that every endomorphism $f\in\End(\A^n_{\kk})$ is uniquely determined by the polynomials $f_i=f^*(x_i)$,  $1\leq i\leq n$. 

In the sequel, we identify  the set ${\mathcal E}_n(\kk):= {\rm End} (\A^n_{\kk})$ with $(\kk [x_1, \ldots,x_n ])^n$. We thus simply denote by $f=(f_1,\ldots,f_n)$ the element of $\mathcal{E}_n(\kk)$ whose corresponding endomorphism $f^*$ is given by  
\[ f^* \colon {\mathcal O} (\A^n_{\kk}) \to  {\mathcal O} (\A^n_{\kk}), \quad P(x_1, \ldots,x_n) \mapsto P\circ f=P(f_1,\ldots,f_n). \]

The composition $g\circ f$ of two endomorphisms $f=(f_1,\ldots,f_n)$ and $g=(g_1,\ldots,g_n)$ is equal to
\[g\circ f=(g_1(f_1,\ldots,f_n),\ldots,g_n(f_1,\ldots,f_n)).\]

Note that for each nonnegative integer $d$, the following set is naturally an affine space (and therefore an algebraic variety!).
\[ \kk [x_1, \ldots,x_n]_{\leq \, d} := \{ P \in \kk [x_1, \ldots,x_n ], \; \deg P \leq d \}.\]
If $f= (f_1, \ldots,f_n) \in {\mathcal E}_n(\kk)$, we set $\deg f := \max_i \{\deg f_i\}$ and   define 
\[ {\mathcal E}_n(\kk)_{\leq \, d} := \{ f \in {\mathcal E}_n(\kk), \; \deg f \leq d \}. \]
The equality ${\mathcal E}_n(\kk)_{\leq \, d} = (  \kk [x_1, \ldots,x_n]_{\leq \, d})^n$ shows that ${\mathcal E}_n(\kk)_{\leq \, d}$ is naturally an affine space. Moreover, the filtration ${\mathcal E}_n(\kk) = \bigcup_d {\mathcal E}_n(\kk)_{\leq \, d}$ defines a structure of ind-variety on ${\mathcal E}_n(\kk)$.

We denote by ${\mathcal G}_n(\kk) =  \Aut (\A^n_{\kk})$  the automorphism group of $\A^n_{\kk}$. The next result allows us to endow ${\mathcal G}_n(\kk)$ with the structure of an ind-variety.

\begin{lemma}
Denote by ${\mathcal C}_n(\kk)$, resp.~${\mathcal J}_n(\kk)$, the set of elements $f$ in ${\mathcal E}_n(\kk)$ whose Jacobian determinant $\Jac (f)$ is a  constant, resp. a nonzero constant. Then, the following assertions hold:
\begin{enumerate}
\item \label{C is closed in E} The set ${\mathcal C}_n(\kk)$ is closed in ${\mathcal E}_n(\kk)$.
\item \label{J is closed in C} The set ${\mathcal J}_n(\kk)$ is open in ${\mathcal C}_n(\kk)$.
\item \label{G is closed in J} The set ${\mathcal G}_n(\kk)$ is closed in ${\mathcal J}_n(\kk)$.
\end{enumerate}
\end{lemma}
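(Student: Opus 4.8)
The plan is to verify each assertion on every piece $\mathcal{E}_n(\kk)_{\leq d}$ of the filtration separately, which by definition of the ind-topology is exactly what ``closed'' and ``open'' mean for subsets of $\mathcal{E}_n(\kk)$. The assignment $f \mapsto \Jac(f)$ sends $\mathcal{E}_n(\kk)_{\leq d}$ into $\kk[x_1,\ldots,x_n]_{\leq n(d-1)}$ and is a morphism of affine varieties, since every coefficient of $\Jac(f)$ is a polynomial in the coefficients of $f$. For \eqref{C is closed in E}, requiring $\Jac(f)$ to be constant amounts to the vanishing of all its coefficients of positive degree, i.e. to finitely many polynomial equations in the coefficients of $f$; hence $\mathcal{C}_n(\kk)\cap\mathcal{E}_n(\kk)_{\leq d}$ is closed. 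For \eqref{J is closed in C}, on $\mathcal{C}_n(\kk)$ the surviving constant term defines a morphism $\mathcal{C}_n(\kk)\to\A^1$, $f\mapsto\Jac(f)$, and $\mathcal{J}_n(\kk)$ is the preimage of $\A^1\setminus\{0\}$, hence open.

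The real content is \eqref{G is closed in J}. First note $\mathcal{G}_n(\kk)\subseteq\mathcal{J}_n(\kk)$: differentiating $f\circ f^{-1}=\id$ shows that $\Jac(f)$ is a unit of $\kk[x_1,\ldots,x_n]$, thus a nonzero constant. Fix $d$, set $\mathcal{G}_n(\kk)_{\leq d}:=\mathcal{G}_n(\kk)\cap\mathcal{E}_n(\kk)_{\leq d}$, and aim to prove this is closed in $\mathcal{J}_n(\kk)_{\leq d}$. I would invoke the classical degree bound $\deg(f^{-1})\leq(\deg f)^{n-1}$ for an automorphism $f$, so that for $f\in\mathcal{G}_n(\kk)_{\leq d}$ the inverse lies in $\mathcal{E}_n(\kk)_{\leq e}$ with $e:=d^{n-1}$. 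Since composition is a morphism and the equalities $f\circ g=\id$, $g\circ f=\id$ reduce to equating coefficients, the incidence set $\Gamma:=\{(f,g)\in\mathcal{E}_n(\kk)_{\leq d}\times\mathcal{E}_n(\kk)_{\leq e}:f\circ g=g\circ f=\id\}$ is Zariski closed, and by uniqueness of a two-sided inverse its first projection is a bijection onto $\mathcal{G}_n(\kk)_{\leq d}$.

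The main obstacle is that the projection of a closed set need not be closed, so this only exhibits $\mathcal{G}_n(\kk)_{\leq d}$ as constructible; I must upgrade this to closedness in $\mathcal{J}_n(\kk)_{\leq d}$, for which it suffices to check stability under specialization. Concretely, let $f_0\in\mathcal{J}_n(\kk)_{\leq d}$ lie in the closure of $\mathcal{G}_n(\kk)_{\leq d}$, and choose a smooth curve $c\mapsto f_c$ with $f_{c_0}=f_0$ and $f_c\in\mathcal{G}_n(\kk)_{\leq d}$ for $c\neq c_0$. The inverses $g_c:=f_c^{-1}$ all lie in the fixed space $\mathcal{E}_n(\kk)_{\leq e}$; if their coefficients stay bounded as $c\to c_0$, they converge to some $g_0$ with $f_0\circ g_0=g_0\circ f_0=\id$, whence $f_0\in\mathcal{G}_n(\kk)$. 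To exclude a pole, suppose the coefficients of $g_c$ have a pole of order $m>0$ at $c_0$, and let $t$ be a local parameter; then $t^m g_c$ has a nonzero limit $\bar g\in\mathcal{E}_n(\kk)_{\leq e}$. Because the identity $g_c\circ f_c=\id$ is \emph{linear} in the coefficients of $g_c$, multiplying it by $t^m$ and letting $c\to c_0$ gives $\bar g\circ f_0=0$, i.e. every component of $\bar g$ vanishes on the image of $f_0$. But $\Jac(f_0)$ is a nonzero constant, so $f_{0,1},\ldots,f_{0,n}$ are algebraically independent and $f_0$ is dominant; thus $\bar g=0$, a contradiction. This rules out the pole and proves \eqref{G is closed in J}. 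Equivalently, one may compactify the second factor to $\p^M$ and invoke properness of the induced projection, the same computation showing that no point at infinity lies over $\mathcal{J}_n(\kk)_{\leq d}$.
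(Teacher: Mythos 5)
Your proofs of (1) and (2) coincide with the paper's. For (3) you take a genuinely different, but correct, route. The paper writes the inverse as a formal power series: for $f$ fixing the origin with invertible Jacobian, each homogeneous component $g_e$ of the formal inverse depends polynomially on the coefficients of $f$ and on $(\Jac f)^{-1}$, so $\psi_e\colon f\mapsto g_e$ is a morphism; by the bound $\deg(f^{-1})\leq(\deg f)^{n-1}$, an element of ${\mathcal J}_n(\kk)_{\leq d}$ is an automorphism if and only if $\psi_e(f-f(0))=0$ for all $e>d^{n-1}$, which exhibits ${\mathcal G}_n(\kk)_{\leq d}$ directly as the common zero locus of morphisms, hence closed. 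You instead use the same degree bound to cap the degree of the inverse, form the incidence variety $\Gamma$, obtain constructibility of its projection by Chevalley, and upgrade to closedness by specialization along a curve; the key computation is that a pole of the inverse at the limit point would yield a nonzero $\bar g$ with $\bar g\circ f_0=0$, which is impossible because $\Jac(f_0)\in\kk^*$ forces $f_{0,1},\dots,f_{0,n}$ to be algebraically independent. Both arguments hinge on the inequality $\deg(f^{-1})\leq(\deg f)^{n-1}$. The paper's version is more economical: it produces explicit closed equations and needs no curve selection or Chevalley. Yours avoids formal power-series inversion but requires two additional (true) ingredients you should make explicit: that the coefficients of $g_c$ are rational functions of $c$ (solve the linear system $g\circ f_c=\id$ by Cramer's rule, uniqueness following from dominance of $f_c$), and that a nonzero constant Jacobian implies algebraic independence of the components in every characteristic (in characteristic $p$ this needs the extra step that a relation $P$ with all $\partial_iP=0$ is a $p$-th power, contradicting minimality). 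Your replacement of the analytic phrase ``coefficients stay bounded'' by the order of pole in a local parameter $t$ is exactly the right fix.
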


\begin{proof}
(\ref{C is closed in E}). Since $\deg ( \Jac (f) ) \leq n ( \deg (f) -1)$, the map $\Jac \colon {\mathcal E}_n(\kk) \to \kk [x_1, \ldots, x_n]$ is an ind-morphism. By definition, ${\mathcal C}_n(\kk)$ is the preimage of the set $\kk$ which is closed in $\kk [x_1, \ldots, x_n]$.

(\ref{J is closed in C}). The Jacobian morphism induces a morphism $\varphi \colon {\mathcal C}_n(\kk) \to \kk$, $f \mapsto \Jac (f)$.  By definition, ${\mathcal J}_n(\kk)$ is the preimage of the set $\kk^*$ which is open in $\kk $.

(\ref{G is closed in J}). Set ${\mathcal J}_{n,0}:= \{ f \in {\mathcal J}_n(\kk), \; f(0) = 0 \}$. Every element $f \in {\mathcal J}_{n,0}$ admits a formal inverse for the composition (see e.g.~\cite [Theorem 1.1.2]{Essen}), i.e.~a formal power series $g =\sum_{d \geq 1} g_d$, where each $g_d = (g_{d,1}, \ldots, g_{d,n})$ is a $d$-homogeneous element of ${\mathcal E}_n(\kk)$, meaning that $g_{d,1}, \ldots, g_{d,n}$ are $d$-homogeneous polynomials in $\kk[x_1, \ldots,x_n]$ such that
\[ f \circ g = g \circ f = (x_1, \ldots, x_n) \quad \text{(as formal power series).} \]
 Furthermore, for each $d$, the map $\psi_d \colon {\mathcal J}_{n,0} \to {\mathcal E}_n(\kk)$ sending $f$ onto $g_d$ is a morphism because each coefficient of every component of $g_d$ can be expressed as a polynomial in the coefficients of the components of $f$ and in the inverse $(\Jac f)^{-1}$ of the polynomial $\Jac f$. Recall furthermore (see \cite[Theorem 1.5]{BCW}) that every automorphism $f \in {\mathcal G}_n(\kk)$ satisfies
\begin{equation} \deg (f^{-1}) \leq  (\deg f)^{n-1}. \label{Gabber's inequality} \end{equation}
Therefore, an element $f \in {\mathcal J}_n(\kk)_{\leq \, d}$ is an automorphism if and if $\tilde{f} := f-f(0)$ is an automorphism. This    amounts to saying that $f$ is an automorphism if and only if $\psi_e ( \tilde{f}  ) = 0$ for all integers $e > d^{n-1}$. These conditions being closed, we have proven that ${\mathcal G}_n(\kk)_{\leq \, d}$ is closed in  ${\mathcal J}_n(\kk)_{\leq \, d}$ for each $d$, i.e. that ${\mathcal G}_n(\kk)$ is closed in ${\mathcal J}_n(\kk)$. Note that when the field $\kk$ has characteristic zero, the Jacobian conjecture (see for example \cite{BCW,Essen}) asserts that the equality ${\mathcal G}_n(\kk) = {\mathcal J}_n(\kk)$ actually holds.
\end{proof}

 Since the multiplication $\Gcal_n(\kk)\times\Gcal_n(\kk) \to\Gcal_n(\kk)$ and inversion $\Gcal_n(\kk)\to\Gcal_n(\kk)$ maps are morphisms (for  the inversion, this again relies on the fundamental inequality \eqref{Gabber's inequality}), we obtain that $\Gcal_n(\kk)$ is an ind-group.
%%%%%%%%%%%%%%%%%%%%%%%%%%%%%%%%%%%%%%%%%%%%%%%%%%%%%%%%%%%%%
\section{Borel subgroups}\label{section:Borel-subgps}

%%%%%%%%%%%%%%%%%%%%%%%%%%%%%%%%%%%%%%%%%%%%%%%%%%%%%%%

Throughout this section, we work over the field $\kk=\C$ of complex numbers.

Note that the affine subgroup 
\[\Acal_n=\{f=(f_1,\ldots,f_n)\in\Gcal_n(\C)\mid \deg(f_i)=1\text{ for all }i=1\ldots n\}\]
 and the Jonqui\`eres (or triangular) subgroup 
\begin{align*}
\Bcal_n & =  \{f=(f_1,\ldots,f_n) \in \Gcal_n(\C) \mid  \forall \, i, \; f_i =a_i x_i + p_i,  \; a_i \in \C^*, \; p_i \in \C[x_{i+1},\ldots,x_n]  \} \\
& =  \{f=(f_1,\ldots,f_n)\in\Gcal_n(\C)\mid  \forall \, i, \; f_i\in\C[x_{i},\ldots,x_n]  \} 
 \end{align*}
are both closed in $\Gcal_n(\C)$.

It is well known that the group $\Gcal_n(\C)$ is connected (see e.g.~\cite{Shafarevich2}*{proof of Lemma 4}, \cite{Kaliman}*{Proposition 2} or \cite{Popov2014}*{Theorem 6}). The same is true for $\Bcal_n$.

\begin{lemma}\label{lemma:G_n and B_n are connected} 
The groups $\Gcal_n(\C)=\Aut(\A^n_{\C})$ and $\Bcal_n$ are connected.
\end{lemma}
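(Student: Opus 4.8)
The plan is to take the connectedness of $\Gcal_n(\C)$ as the cited known fact and to concentrate on $\Bcal_n$, for which I would give a self-contained argument. Since a topological space is connected as soon as every point lies in the connected component of a fixed base point, it suffices to show that each $f \in \Bcal_n$ can be joined to the identity by a chain of connected subsets of $\Bcal_n$. The key structural observation is that $\Bcal_n$ factors as $\Bcal_n = T_n \cdot \mathcal{U}_n$, where $T_n = \{(a_1 x_1, \ldots, a_n x_n) : a_i \in \C^*\} \cong (\C^*)^n$ is the diagonal torus and $\mathcal{U}_n = \{(x_1 + q_1, \ldots, x_n + q_n) : q_i \in \C[x_{i+1}, \ldots, x_n]\}$ is the unipotent triangular subgroup. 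Indeed, writing $f_i = a_i x_i + p_i$, one checks that $f = d \circ u$ with $d = (a_1 x_1, \ldots, a_n x_n) \in T_n$ and $u = (x_1 + p_1/a_1, \ldots, x_n + p_n/a_n) \in \mathcal{U}_n$, the latter being an automorphism because every strictly triangular endomorphism is invertible by back-substitution from the last coordinate.

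Next I would prove that both factors are connected. The torus $T_n$ is the image of the irreducible variety $(\C^*)^n$ under the morphism $(a_1, \ldots, a_n) \mapsto (a_1 x_1, \ldots, a_n x_n)$, hence connected. For $\mathcal{U}_n$, I would filter it by degree: for $d \geq 1$ the subset $\mathcal{U}_n^{\leq d}$ of elements of degree at most $d$ is precisely the set of tuples with $q_i \in \C[x_{i+1}, \ldots, x_n]_{\leq d}$, with no further constraint since all such tuples are automorphisms; thus $\mathcal{U}_n^{\leq d}$ is a finite-dimensional affine space, in particular irreducible and connected, and contains $\id$. As $\mathcal{U}_n = \bigcup_d \mathcal{U}_n^{\leq d}$ is an ascending union of connected sets sharing the common point $\id$, it is connected. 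Finally, to connect an arbitrary $f = d \circ u$ to $\id$: the morphism $T_n \to \Bcal_n$, $s \mapsto s \circ u$, has connected image containing $u$ (at $s = \id$) and $f$ (at $s = d$), so $f$ lies in the same component as $u$; and $u$ lies in the same component as $\id$ by the connectedness of $\mathcal{U}_n$. Hence every element of $\Bcal_n$ is connected to $\id$, so $\Bcal_n$ is connected.

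The main obstacle is not the group theory but the care required with the ind-topology. I must verify that each degree-bounded stratum $\mathcal{U}_n^{\leq d}$ really is the full affine space of coefficient tuples (this rests on the fact that strictly triangular endomorphisms are automatically bijective, so that no closed invertibility condition cuts it down), and that connectedness is inherited through the ascending union defining the ind-variety, which follows from the elementary fact that a union of connected sets with a common point is connected, applied stratum by stratum in the ind-topology. I would also record that the maps used ($s \mapsto s \circ u$ and the factorization) are morphisms of ind-varieties, so that continuity, and hence the preservation of connectedness under images, is justified by the fact, granted at the end of Section~\ref{section:ind-groups}, that the multiplication of $\Gcal_n(\C)$ is an ind-morphism. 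Deliberately routing the argument through the one-parameter family $s \mapsto s \circ u$ and the filtration of $\mathcal{U}_n$, rather than through the product $T_n \times \mathcal{U}_n$, avoids the well-known subtlety that the ind-topology on a product need not coincide with the product of the ind-topologies.
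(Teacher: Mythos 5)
Your argument is correct, but it follows a genuinely different route from the paper's. The paper proves both statements at once by showing that $\Gcal_n(\C)$ and $\Bcal_n$ are \emph{curve-connected}: given $f$, it first removes the constant term via the family $t\mapsto f-tf(0,\dots,0)$, then conjugates by the homotheties $t\cdot\id_{\A^n_\C}$ and extends the resulting morphism $\A^1_\C\smallsetminus\{0\}\to\Gcal_n(\C)$ across $t=0$, where the limit is the linear part of $f$; this reduces everything to the connectedness of $\GL_n(\C)$, respectively of the group of invertible upper triangular matrices, and the same two paths stay inside $\Bcal_n$ when $f$ is triangular. You instead treat $\Gcal_n(\C)$ as a cited fact (which the paper itself acknowledges is well known before restating it in the lemma) and handle $\Bcal_n$ via the factorization $f=d\circ u$ into a diagonal part $d\in T_n\cong(\C^*)^n$ and a unipotent part $u\in\mathcal{U}_n$, then connect $f$ to $u$ along the torus and $u$ to $\id$ inside $\mathcal{U}_n=\bigcup_d\mathcal{U}_n^{\leq d}$, an increasing union of affine spaces through $\id$. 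Your decomposition checks out ($(d\circ u)_i=a_ix_i+p_i$), the strata $\mathcal{U}_n^{\leq d}$ really are full affine spaces since no invertibility condition intervenes for unipotent triangular maps, and the common-point union argument is sound. What the paper's approach buys is uniformity (one degeneration argument covers both groups, and the same homothety trick is reused later in Proposition~\ref{prop: the epsilon trick with the triangular group in any dimension}); what yours buys is that it is entirely explicit for $\Bcal_n$, exploits its solvable structure directly, and avoids having to justify the extension of a morphism across a puncture. Your care in routing everything through one-parameter images rather than the product $T_n\times\mathcal{U}_n$ is appropriate, though the paper's convention that $V\times W$ is filtered by $V_{\leq d}\times W_{\leq d}$ would also have made the product argument workable.
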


\begin{proof}
We say that a variety $V$ is  curve-connected if for all points $x,y \in V$, there exists a morphism $\varphi \colon C \to V$, where $C$ is a connected curve (not necessarily irreducible) such that $x$ and $y$ both belong to the image of $\varphi$. The same definition applies to ind-varieties. 

We   prove that  $\Gcal_n(\C)$ and $\Bcal_n$ are curve-connected. Let $f$ be an element in $\Gcal_n(\C)$. We first consider the morphism $\alpha \colon \A^1_{\C} \to \Gcal_n(\C)$ defined by 
\[\alpha (t) =f-tf(0,\ldots,0)\]
 which is contained in $\Bcal_n$ if $f$ is triangular. Note that $\alpha (0)=f$ and that the automorphism $\tilde{f}:=\alpha (1)$ fixes the origin of $\A^n_{\C}$. 

Therefore the morphism $\beta \colon \A^1_{\C} \smallsetminus \{ 0 \} \to \Gcal_n(\C)$, $t \mapsto (t^{-1}\cdot\id_{\A_{\C}^n})\circ\tilde{f}\circ (t\cdot\id_{\A_{\C}^n})$ extends to a morphism $\beta \colon \A^1_{\C} \to \Gcal_n(\C)$ (with values in $\Bcal_n$ if $f$, thus $\tilde{f}$, is triangular) such that $\beta (1) =\tilde{f}$ and such that $\beta (0) $ is a linear map, namely the linear part of $\tilde{f}$. This concludes the proof since $\textrm{GL}_n(\C)$ (resp.\ the set of all invertible upper triangular matrices) is curve-connected.
\end{proof}

Recall that the subgroup of upper triangular matrices in $\GL_{n} (\C)$ is solvable and has derived length 
$\left \lceil{\log_2(n)}\right \rceil +1$, where  $\left \lceil{x}\right \rceil $ denotes the smallest integer greater than or equal to   the real number $x$  (see e.g. \cite[page 16]{Wehrfritz}). In contrast, we have the following result.

\begin{lemma}\label{lemma:B_n is solvable}
The group $\Bcal_n$ is solvable of derived length $n+1$.
\end{lemma}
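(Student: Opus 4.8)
The plan is to prove the two assertions of Lemma~\ref{lemma:B_n is solvable} separately: that $\Bcal_n$ is solvable, and that its derived length is exactly $n+1$. The natural strategy is to exhibit the derived series of $\Bcal_n$ explicitly, using the triangular structure of the automorphisms. Recall that $f \in \Bcal_n$ has the form $f_i = a_i x_i + p_i$ with $a_i \in \C^*$ and $p_i \in \C[x_{i+1}, \ldots, x_n]$. The key observation is that the map sending $f = (f_1, \ldots, f_n)$ to the tuple of linear coefficients $(a_1, \ldots, a_n) \in (\C^*)^n$ is a group homomorphism $\Bcal_n \to (\C^*)^n$, since the $i$-th linear coefficient of a composition $g \circ f$ is $a_i^g \cdot a_i^f$ (the variable $x_i$ contributes its coefficient multiplicatively and the $p_i$ part only involves strictly higher-indexed variables). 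Its kernel is the subgroup $\Ucal$ of \emph{unipotent} triangular automorphisms, those with all $a_i = 1$. Since $(\C^*)^n$ is abelian, we immediately get $D(\Bcal_n) \subseteq \Ucal$, so it suffices to analyze $\Ucal$.

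The main work is then to show that $\Ucal$ is solvable and to compute the derived lengths. First I would set up a filtration of $\Ucal$ measuring ``how close to the identity'' an automorphism is, controlled by the index. A convenient approach is to consider, for each $f \in \Ucal$ and each $i$, the polynomial $f_i - x_i \in \C[x_{i+1}, \ldots, x_n]$, and to stratify by how many of the leading components agree with the identity. The crucial algebraic input is a commutator estimate: if two elements of $\Ucal$ both fix the first $k$ coordinates (i.e. $f_i = x_i$ for $i \leq k$) in a suitable sense, then their commutator fixes strictly more, or more precisely the ``depth'' of the deviation increases under taking commutators. One should verify that taking a commutator $[f,g] = fgf^{-1}g^{-1}$ of two unipotent triangular maps produces cancellation in the top-degree deviation terms, which is what drives the derived series down to the identity. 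Because each $f_i$ only depends on $x_{i+1}, \ldots, x_n$, there is a built-in decreasing induction on $i$ from $n$ down to $1$; this is what forces solvability with a bound on the derived length that is \emph{linear} in $n$ (giving $n+1$), in sharp contrast to the logarithmic $\lceil \log_2 n \rceil + 1$ for genuine upper-triangular matrices, where off-diagonal entries interact more freely.

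The hard part, and where the precise value $n+1$ really gets pinned down, is the lower bound: proving that the derived length is not smaller than $n+1$. For this I would exhibit an explicit chain of elements whose iterated commutators remain nontrivial through $n$ steps. A clean way is to use the elementary generators $e_{i,c} = (x_1, \ldots, x_{i-1}, x_i + c\,x_{i+1}^{m}, x_{i+1}, \ldots, x_n)$ or similar one-variable shifts, and to compute that a carefully chosen iterated commutator involving indices $n, n-1, \ldots, 1$ in turn yields a nontrivial element of $D^{n-1}(\Bcal_n)$, while $D^{n}(\Bcal_n)$ is trivial. Concretely, one checks that $D^{k}(\Bcal_n)$ consists of unipotent automorphisms that fix the last few coordinates, peeling off one more coordinate with each derivation, so that after $n$ derivations from $\Bcal_n$ (one to land in $\Ucal$, then $n-1$ more to exhaust the coordinates) one reaches $\{\id\}$. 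The delicate verifications are the exact commutator computations showing both that each derivation drops the length by at most one (upper bound on derived length) and that it genuinely drops it (nontriviality, for the lower bound); these are finite but somewhat intricate polynomial calculations that I would organize by tracking leading terms. The bookkeeping of which coordinates are fixed after each commutator is the main obstacle, but it is of a purely combinatorial-algebraic nature once the commutator formula for triangular maps is in hand.
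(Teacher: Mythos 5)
Your overall strategy coincides with the paper's: pass to the unipotent kernel of the homomorphism $\Bcal_n\to(\C^*)^n$, filter by which coordinates are fixed, and compute commutators of elementary maps $e(j,q)=(x_1,\ldots,x_j+q,\ldots,x_n)$ with $q\in\C[x_{j+1},\ldots,x_n]$. But as written your accounting proves derived length $n$, not $n+1$. Setting $U_k=\{f\in\Bcal_n \mid f_i=x_i \text{ for } i>k,\ f_i=x_i+p_i \text{ for } i\le k\}$, the correct chain is $D(\Bcal_n)=U_n$ and $D(U_{k+1})=U_k$, so $D^{j}(\Bcal_n)=U_{n-j+1}$ and the length is $n+1$: landing in the unipotent group does not yet fix any coordinate (e.g.\ $e(n,c)=(x_1,\ldots,x_{n-1},x_n+c)$ lies in $D(\Bcal_n)$ and moves $x_n$), so you need $n$ further derivations after the first one, not $n-1$ as you claim. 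Your statement ``after $n$ derivations one reaches $\{\id\}$'' already fails for $n=1$, where $\Bcal_1$ is the non-abelian affine group of the line; likewise your target ``$D^{n-1}(\Bcal_n)\ne 1$ and $D^{n}(\Bcal_n)=1$'' is the wrong pair --- you need $D^{n}(\Bcal_n)\ne 1$ and $D^{n+1}(\Bcal_n)=1$. Relatedly, the filtration must be by the \emph{last} coordinates being fixed, as you say near the end, not by ``the first $k$ coordinates ($f_i=x_i$ for $i\le k$)'' as you say when stating the key commutator estimate: two unipotent maps fixing $x_1,\ldots,x_k$ can have a commutator that still moves $x_{k+1}$, e.g.\ $[e(k+1,q),e(k+2,1)]=e\bigl(k+1,\,q(x_{k+2},\ldots,x_n)-q(x_{k+2}-1,x_{k+3},\ldots,x_n)\bigr)$.

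There is also a gap in your lower-bound plan. A nontrivial left-normed iterated commutator ``involving indices $n,\ldots,1$ in turn'' certifies membership in the lower central series, not in the derived series, so it does not by itself show $D^{n}(\Bcal_n)\ne\{1\}$. The way to pin down the exact value (and what the paper does) is to prove the reverse inclusions $U_n\subseteq D(\Bcal_n)$ and $U_k\subseteq D(U_{k+1})$, using the identities $[e(j,q),d(j,\lambda)]=e(j,(1-\lambda)q)$ for the dilatation $d(j,\lambda)$, and $[e(j,q),e(j+1,1)]=e(j,\Delta_{j+1}(q))$ where the difference operator $\Delta_{j+1}(q)=q(x_{j+1},\ldots,x_n)-q(x_{j+1}-1,x_{j+2},\ldots,x_n)$ is surjective onto $\C[x_{j+1},\ldots,x_n]$. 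Since the $e(j,q)$ with $j\le k$ generate $U_k$, this computes the derived series exactly and yields both bounds at once.
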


\begin{proof}
For each integer $k \in \{ 0, \ldots,n \}$, denote by $U_k$ the subgroup of $\Bcal_n$ whose elements are of the form $f= (f_1, \ldots, f_n)$ where $f_i = x_i$ for  all $i > k$ and $f_i = x_i + p_i$  with $p_i \in \C [x_{i+1}, \ldots,x_n ]$ for all  $i \leq k$. We will prove  $D (\Bcal_n ) = U_n$ and   $D^j (U_n) = U_{n-j}$ for all $j \in \{0, \ldots, n \}$. 

For this, we consider the dilatation $d(j,\lambda_j)$ and the elementary automorphism $e(j, q_j)$ which are defined for
every integer $j \in \{1, \ldots, n \}$, every nonzero constant $\lambda_j \in \C^*$ and every polynomial $q_j \in \C [x_{j+1}, \ldots,x_n ]$ by
\[ d(j,\lambda_j) = (g_1, \ldots,g_n)  \quad\text{and}\quad e(j, q_j) = (h_1, \ldots, h_n) ,\]
where  $g_j = \lambda_j x_j$, $h_j = x_j + q_j$ and $g_i = h_i = x_i$ for $i \neq j$. Note that an element $f\in U_k$ as above is equal to
\[ f = e(k,p_k) \circ \cdots \circ e(2,p_2) \circ e(1,p_1).\]
In particular, this tells us that $U_k$ is generated by the elements $e(j, q_j)$, $j \leq k$, $q_j \in \C [x_{j+1}, \ldots,x_n ]$.

The inclusion  $D (\Bcal_n ) \subseteq U_n$ is straightforward and left to the reader. The converse  inclusion  $ U_n   \subseteq D(\Bcal_n ) $ follows from the equality 
\[ [ e(j, q_j), d(j, \lambda_j) ] = e(j, (1- \lambda_j) q_j) .\]

Finally, we prove $D^j (U_n) = U_{n-j}$ by proving  that the equality $D (U_{k+1} ) = U_k$ holds for all $k \in \{ 0, \ldots, n-1 \}$.  The inclusion $D (U_{k+1} ) \subseteq U_k$ is straightforward and left to the reader. To prove  the converse inclusion, let us introduce the map $\Delta_i \colon \C [x_i, \ldots,x_n ] \to \C [x_i, \ldots,x_n]$, $q \mapsto q (x_i, \ldots, x_n) - q( x_i-1, x_{i+1}, \ldots, x_n)$. Note that $\Delta_i$ is surjective and that 
\[  [ e(j, q_j) , e(j+1, 1) ] = e(j, \Delta _{j+1} (q_j) )\]
for all $j \in \{1, \ldots, n-1 \}$ and all $q_j \in \C [x_{j+1}, \ldots, x_n]$.
This implies   $ U_k   \subseteq  D (U_{k+1} )$ and concludes the proof.
\end{proof}

%%%%%%%%%%%%%%%%%%%%%%%%%%%%%%%%%%%%%%%%%%%%%%%%%%%%%%%

\subsection{Triangular automorphisms form a Borel subgroup.}

%%%%%%%%%%%%%%%%%%%%%%%%%%%%%%%%%%%%%%%%%%%%%%%%%%%%%%%

In this section, we prove the first two statements of the theorem~\ref{main-theorem} from the introduction. For this, we need the following result.

\begin{proposition} \label{prop: the epsilon trick with the triangular group in any dimension}
Let $n \geq 2$ be an integer. If a closed subgroup of $\Aut (\A^n_{\C})$ strictly contains $\Bcal_n$, then it also contains at least one linear automorphism that is not triangular.
\end{proposition}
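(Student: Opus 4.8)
The plan is to produce the desired linear automorphism as the linear part of a suitable \emph{translate} of a non-triangular element, extracted by a homothety limit exactly as in the proof of Lemma~\ref{lemma:G_n and B_n are connected}. So let $G$ be a closed subgroup of $\Aut(\A^n_\C)$ with $\Bcal_n\subsetneq G$, and fix $g=(g_1,\dots,g_n)\in G\setminus\Bcal_n$. The key is to reformulate non-triangularity infinitesimally: by the second description of $\Bcal_n$, one has $g\in\Bcal_n$ if and only if $g_i\in\C[x_i,\dots,x_n]$ for all $i$, i.e. if and only if $\partial g_i/\partial x_j\equiv 0$ for all $j<i$. Since $g\notin\Bcal_n$, there are indices $j<i$ with $\partial g_i/\partial x_j\not\equiv 0$, so the Jacobian matrix $\Jac_x(g)=(\partial g_i/\partial x_j)_{i,j}$ has a below-diagonal entry that is a nonzero polynomial. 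Consequently, for a \emph{general} point $p\in\A^n_\C$ the numerical matrix $\Jac_p(g)\in\GL_n(\C)$ is not upper triangular; here $\Jac_p(g)$ is invertible because $\Jac(g)$ is a nonzero constant.

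The central step is then to realize $\Jac_p(g)$, viewed as a linear automorphism, as an element of $G$. For this I first recenter $g$ at $p$: writing $t_a$ for the translation $x\mapsto x+a$ (which lies in $\Bcal_n\subseteq G$), set $g^{(p)}:=t_{-g(p)}\circ g\circ t_{p}\in G$. By construction $g^{(p)}$ fixes the origin and its linear part equals $\Jac_p(g)$. Now I extract this linear part just as in Lemma~\ref{lemma:G_n and B_n are connected}: with $h_t:=t\cdot\id_{\A^n_\C}\in\Bcal_n$, the assignment $t\mapsto h_t^{-1}\circ g^{(p)}\circ h_t$ has $i$-th component $\sum_{d\geq 1}t^{d-1}(g^{(p)})_{i,[d]}$, where $(g^{(p)})_{i,[d]}$ denotes the homogeneous part of degree $d$; since $g^{(p)}(0)=0$ all exponents are nonnegative, so this morphism $\C^*\to G$ extends to $t=0$ with value the linear part $\Jac_p(g)$. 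As all these automorphisms have the same degree as $g$ and the limit lies in $\GL_n(\C)\subseteq\Aut(\A^n_\C)$, closedness of $G$ forces $\Jac_p(g)\in G$. Choosing $p$ general so that $\Jac_p(g)$ is not upper triangular then yields the required non-triangular linear automorphism in $G$.

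The main subtlety---and the point where the naive approach fails---is precisely the passage to a \emph{general} base point $p$: one cannot simply take the linear part $\Jac_0(g)$ of $g$ itself, because the below-diagonal entries $\partial g_i/\partial x_j$ may vanish at the origin even though they are not identically zero (for instance $g=(x_1,x_2+x_1^2)$ is non-triangular yet $\Jac_0(g)=\id$ is triangular). Translating the base point to a general $p$ before linearizing is exactly what converts the non-vanishing of the polynomial $\partial g_i/\partial x_j$ into a genuinely non-triangular numerical matrix. The only remaining thing to verify is that the limit stays inside $G$, and this is immediate: the whole curve $t\mapsto h_t^{-1}\circ g^{(p)}\circ h_t$ lives in the fixed finite-dimensional piece $\Aut(\A^n_\C)_{\leq\deg g}$, on which $G$ is Zariski closed, and the limiting map is visibly an automorphism (an element of $\GL_n(\C)$), so no appeal to the Jacobian conjecture is needed.
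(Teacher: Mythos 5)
Your argument is essentially identical to the paper's: both pick a non-triangular element, observe that some below-diagonal partial derivative $\partial f_i/\partial x_j$ is a nonzero polynomial, translate the source (and target) so that the element fixes the origin and its linear part is the Jacobian matrix at a point where that partial does not vanish, and then extract this linear part as a limit of conjugates by homotheties, which stays in the subgroup by closedness. The proposal is correct and takes the same route, including the same key subtlety of evaluating at a general point rather than at the origin.
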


\begin{proof}
Let $H$ be a closed subgroup of $\Aut (\A^n_{\C})$ strictly containing $\Bcal_n$. We first prove that $H$ contains an automorphism whose linear part is not triangular. Let $f =(f_1, \ldots, f_n)$ be an element in $H\setminus \Bcal_n$. Then, there exists at least one component $f_i$ of $f$ that depends on an indeterminate $x_j$ with $j < i$, i.e.\ such that $\frac{\partial f_i}{\partial x_j} \neq 0$. Now, choose $c=(c_1,\ldots,c_n) \in \A^n_{\C}$ such that $\frac{\partial f_i}{\partial x_j}(c) \neq 0$ and consider the translation $t_c:=(x_1+c_1,\ldots,x_n+c_n)\in\Bcal_n$. Since
\[ f_i(x+c) = f_i (c) + \sum_k \frac{\partial f_i}{\partial x_k}(c) x_k + (\text{terms of higher order}),\]
 the linear part $l$ of $f \circ t_c$ is not triangular because  it corresponds to the (non-triangular) invertible matrix  $\left(  \frac{\partial f_i}{\partial x_k}(c) \right)_{ik}$. 
 Composing on the left hand side by another translation $t'$, we obtain an element $g:= t' \circ f \circ t \in H$ which fixes the origin of $\A^n_{\C}$ and whose linear part is again $l$.

For every  $\varepsilon\in\C^*$, set $h_{\varepsilon}:=(\varepsilon x_1,\ldots,\varepsilon x_n)\in\Bcal_n$. We can finally conclude by noting that
\[  \lim_{\varepsilon \to 0} h_{\varepsilon}^{-1}\circ g\circ h_{\varepsilon} = l \in H,\]
where the limit means that the ind-morphism $\varphi \colon \C^* \to  \Aut (\A^n_{\C})$, $\varepsilon \mapsto h_{\varepsilon}^{-1}\circ g\circ h_{\varepsilon}$ extends to a morphism $\psi \colon \C \to  \Aut (\A^n_{\C})$ such that $\psi (0) = l$. Since we have $\psi (\varepsilon) \in H$ for each $\varepsilon \in \C^*$, it is clear that $\psi (0)$ must also belong to $H$. Indeed, note that the set $\{ \varepsilon \in \C, \; \psi (\varepsilon) \in H \}$ is Zariski-closed in $\C$.
\end{proof}

\begin{proposition}\label{theorem:triangular automorphisms are a Borel subgroup}
Let $n \geq 2$ be an integer. Then, the Jonqui\`eres group $\Bcal_n$ is   maximal among all solvable subgroups of $\Aut ( \A^n_{\C})$.
\end{proposition}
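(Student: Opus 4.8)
The plan is to prove that $\Bcal_n$ is a maximal solvable subgroup by combining the two facts already assembled in the excerpt: the ``$\varepsilon$-trick'' of Proposition~\ref{prop: the epsilon trick with the triangular group in any dimension}, which is stated for \emph{closed} subgroups, and the closure lemma (Lemma~\ref{lemma: the closure of a (solvable) subgroup is a (solvable) subgroup}), which lets us pass from an arbitrary solvable subgroup to a closed solvable one. So I would argue by contradiction: suppose $H$ is a solvable subgroup with $\Bcal_n \subsetneq H$. I want to derive a contradiction with the solvability of $H$.

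First I would replace $H$ by its closure $\overline{H}$. By Lemma~\ref{lemma: the closure of a (solvable) subgroup is a (solvable) subgroup}(\ref{the closure of a solvable subgroup is a solvable subgroup}), $\overline{H}$ is again solvable, and it is closed by construction; moreover it still contains $\Bcal_n$. Here I must check that the inclusion $\Bcal_n \subsetneq \overline{H}$ is still \emph{strict}. This is where I would use that $\Bcal_n$ is itself closed (stated at the start of Section~\ref{section:Borel-subgps}): if $H \not\subseteq \Bcal_n$, then $\overline{H}$ is not contained in the closed set $\Bcal_n$ either, so the strict containment survives passage to the closure. Thus I have reduced to the case where $H$ is a \emph{closed} solvable subgroup strictly containing $\Bcal_n$.

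Now Proposition~\ref{prop: the epsilon trick with the triangular group in any dimension} applies: the closed group $\overline{H}$ must contain a non-triangular linear automorphism $\ell$. The heart of the matter is then to show that $\Bcal_n$ together with a single non-triangular element of $\GL_n(\C)$ already generates a non-solvable group, so that $\overline{H}$ cannot be solvable. For this I would exhibit inside $\langle \Bcal_n, \ell\rangle$ a copy of a known non-solvable group. Concretely, since the diagonal dilatations and the upper-triangular linear maps lie in $\Bcal_n$, adjoining one lower-triangular (or otherwise non-upper-triangular) linear map lets me produce the full group of linear automorphisms generated by the standard upper-triangular Borel of $\GL_n$ and $\ell$; in the $2\times 2$ block where $\ell$ fails to be triangular this produces a subgroup of $\GL_n(\C)$ containing a conjugate of $\SL_2(\C)$ (for instance $\mathrm{SL}_2$ embedded in the coordinates $x_i, x_j$), which is not solvable. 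Since $\GL_n(\C) \subseteq \Aut(\A^n_\C)$ and a subgroup containing a non-solvable group is non-solvable, this contradicts the solvability of $\overline{H}$.

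I expect the main obstacle to be the final linear-algebra step: making precise that upper-triangular matrices plus one non-triangular matrix generate a non-solvable subgroup of $\GL_n(\C)$. The cleanest route is to reduce to $n=2$ by restricting attention to a suitable coordinate $2$-plane. One picks indices $i<j$ witnessing the non-triangularity of $\ell$ and notes that the subgroup of $\GL_n$ acting as the identity on the remaining coordinates and arbitrarily on $\langle x_i, x_j\rangle$ is a copy of $\GL_2(\C)$; the upper-triangular part of this $\GL_2$ lies in $\Bcal_n$, and $\ell$ contributes a genuinely lower-triangular (non-diagonal) element there, so together they generate all of $\GL_2(\C)$, hence $\SL_2(\C)$, which is non-solvable. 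Care is needed because $\ell$ may not restrict to this plane as a block-diagonal matrix, so I would first conjugate or compose $\ell$ with elements of $\Bcal_n$ to clean it up to a genuine $2\times 2$ non-triangular block, then invoke the elementary fact that $B_2$ and a single non-triangular element generate $\GL_2(\C)$.
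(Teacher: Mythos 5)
Your proposal is correct and follows essentially the same route as the paper's proof: replace $H$ by its closure using Lemma~\ref{lemma: the closure of a (solvable) subgroup is a (solvable) subgroup} (noting that strictness survives because $\Bcal_n$ is closed), invoke Proposition~\ref{prop: the epsilon trick with the triangular group in any dimension} to obtain a non-triangular linear element, and conclude from the fact that the triangular elements form a Borel, hence a maximal solvable subgroup, of the ambient linear algebraic group. The only divergence is at the final step, where the paper simply cites this maximality (applied to $\Acal_n$, via Humphreys' result that parabolic subgroups are closed and connected) while you sketch an elementary derivation inside $\GL_n(\C)$; your ``clean-up to a single non-triangular $2\times 2$ block'' is slightly optimistic as stated, since Bruhat reduction only yields an arbitrary non-identity permutation matrix rather than an adjacent transposition, but the standard argument (conjugating the upper unitriangular group by that permutation to produce a negative root subgroup, hence a copy of $\SL_2(\C)$) closes that gap.
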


\begin{proof}  Suppose by contradiction that there exists a  solvable subgroup $H$ of $\Aut (\A^n_{\C})$ that strictly contains $\Bcal_n$. Up to replacing $H$ by its closure $\overline{H}$ (see Lemma~\ref{lemma: the closure of a (solvable) subgroup is a (solvable) subgroup}), we may assume that $H$ is closed. By Proposition \ref{prop: the epsilon trick with the triangular group in any dimension}, the group $H \cap \Acal_n$ strictly contains $\Bcal_n \cap \Acal_n$. But since  $\Bcal_n \cap \Acal_n$ is a Borel subgroup of $\Acal_n$, this prove that $H \cap \Acal_n$ is not solvable, thus that $H$ itself is not solvable. Notice that we have used the fact that every Borel subgroup of a connected linear algebraic group is a maximal solvable subgroup. Indeed, every parabolic subgroup (i.e.\ a subgroup containing a Borel subgroup) of a connected linear algebraic group is necessarily closed and connected. See e.g.~\cite{Hum81}*{Corollary B of Theorem (23.1), page 143}.
\end{proof}

In dimension two, we establish another maximality property of the triangular subgroup which is actually stronger than the above one (see Remark \ref{rem:maximal-closed_implies_solvable-maximal} below).

\begin{proposition} \label{prop: B is a maximal closed subgroup of Aut}
The Jonquières group $\Bcal_2$ is  maximal among the closed subgroups of  $\Aut (\A^2_{\C})$.
\end{proposition}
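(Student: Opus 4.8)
The plan is to show that any closed subgroup $H$ of $\Aut(\A^2_\C)$ strictly containing $\Bcal_2$ must already be the whole group. The key external input will be the Jung--van der Kulk structure theorem, which guarantees that $\Aut(\A^2_\C)$ is generated by the affine subgroup $\Acal_2$ and the triangular subgroup $\Bcal_2$; hence it suffices to prove that $H$ contains all of $\Acal_2$.

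First I would apply Proposition~\ref{prop: the epsilon trick with the triangular group in any dimension} to the closed subgroup $H \supsetneq \Bcal_2$: it provides a linear automorphism $l \in H$ whose associated matrix is not upper triangular. Since $l$ fixes the origin, $l \in \Acal_2$, so the subgroup $H' := H \cap \Acal_2$ contains both $l$ and the Borel subgroup $B := \Bcal_2 \cap \Acal_2$ of upper-triangular affine maps. Here $H'$ is genuinely Zariski-closed in $\Acal_2$, because $\Acal_2$ sits inside the finite-dimensional stratum $\Gcal_2(\C)_{\leq \, 1}$, where the ind-topology restricts to the usual Zariski topology, and because $\Acal_2$ is itself closed in $\Gcal_2(\C)$.

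The core step is to prove $H' = \Acal_2$. I would use the projection $\pi \colon \Acal_2 \to \GL_2(\C)$ sending an affine map to its linear part, whose kernel is the group of translations. All translations are triangular, so $\ker \pi \subseteq B \subseteq H'$. The image $\pi(H')$ is a closed subgroup of $\GL_2(\C)$ containing the Borel subgroup $B_0 = \pi(B)$ of upper-triangular invertible matrices, and it also contains $\pi(l) = l \notin B_0$. Because $\GL_2(\C)$ has semisimple rank one, its only parabolic subgroups are $B_0$ and $\GL_2(\C)$ itself, so $B_0$ is maximal among closed subgroups; hence $\pi(H') = \GL_2(\C)$. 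Combining surjectivity of $\pi|_{H'}$ with the inclusion $\ker \pi \subseteq H'$ yields $H' = \pi^{-1}(\GL_2(\C)) = \Acal_2$: indeed any affine $g$ can be written $g = (g h^{-1}) h$ with $h \in H'$ chosen so that $\pi(h) = \pi(g)$, whence $g h^{-1} \in \ker \pi \subseteq H'$.

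Finally, $H \supseteq \Acal_2$ together with $H \supseteq \Bcal_2$ gives $H \supseteq \langle \Acal_2, \Bcal_2 \rangle = \Aut(\A^2_\C)$ by Jung--van der Kulk, so $H = \Aut(\A^2_\C)$, as desired. The main obstacle --- and the reason this argument is confined to dimension two --- is the appeal to the structure theorem: it is exactly what lets us reduce the maximality of $\Bcal_2$ to the elementary maximality of the Borel subgroup of $\GL_2(\C)$, and it has no counterpart in higher dimension, where non-tame and non-triangularizable phenomena occur. The remaining points (closedness of $H'$, closedness of $\pi(H')$, and the reconstruction of $\Acal_2$ from its image and kernel) are standard algebraic-group facts and should not cause difficulty.
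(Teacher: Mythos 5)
Your proposal is correct and follows essentially the same route as the paper: invoke Proposition~\ref{prop: the epsilon trick with the triangular group in any dimension} to obtain a non-triangular linear element of $H$, use the maximality of the upper-triangular Borel in $\GL_2(\C)$ to conclude that $H$ contains all of $\Acal_2$, and finish with Jung--van der Kulk. The only (immaterial) difference is that the paper derives the maximality of $B_2=\Bcal_2\cap\GL_2(\C)$ directly from the Bruhat decomposition $\GL_2(\C)=B_2\sqcup B_2\circ f\circ B_2$, whereas you route the argument through the classification of parabolic subgroups and the projection $\Acal_2\to\GL_2(\C)$.
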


\begin{proof} 
Let $H$ be a closed subgroup of $\Aut (\A^2_{\C})$ strictly containing $\Bcal_2$. By Proposition \ref{prop: the epsilon trick with the triangular group in any dimension} above, $H$ contains a linear  automorphism which is not triangular.   This implies that $H$ contains  all linear automorphisms, hence $\Acal_2$, and it is therefore equal to $\Aut ( \A^2_{\C})$. Recall indeed that  the subgroup $B_2=\Bcal_2\cap\GL_{2}(\C)$ of invertible upper triangular matrices  is a maximal subgroup of $\GL_{2}(\C)$, since the Bruhat decomposition  expresses $\GL_{2}(\C)$ as the disjoint union of  two double cosets of $B_2$, which are namely  $B_2$ and $B_2\circ f\circ B_2$, where $f$ is any element of $\GL_{2}(\C)\setminus B_2$.
\end{proof}

\begin{remark}
Proposition \ref{prop: B is a maximal closed subgroup of Aut} can not be generalized to higher dimension, since $\Bcal_n$ is strictly contained into the (closed) subgroup of automorphisms of the form  $f=(f_1,\ldots,f_n)$ such that $f_n=a_n x_n + b_n$ for some $a_n,b_n \in \C$ with $a_n \neq 0$.
\end{remark}

\begin{remark}\label{rem:maximal-closed_implies_solvable-maximal}
Proposition~\ref{prop: B is a maximal closed subgroup of Aut} implies Proposition~\ref{theorem:triangular automorphisms are a Borel subgroup} for $n=2$. Indeed, suppose that $\Bcal_2$ is strictly included into some solvable subgroup $H$ of $\Aut (\A^2_{\C})$. Up to replacing $H$ by $\overline{H}$ (see Lemma~\ref{lemma: the closure of a (solvable) subgroup is a (solvable) subgroup}), we may further  assume that $H$ is closed. By Proposition~\ref{prop: B is a maximal closed subgroup of Aut},   we would thus get that $H = \Aut (\A^2_{\C})$. But this is a contradiction because  the group $\Aut (\A^2_{\C})$ is obviously not solvable, since  it   contains the linear group $\textrm{GL}(2,\C)$ which is not solvable.
\end{remark}

By Proposition \ref{theorem:triangular automorphisms are a Borel subgroup}, we can say that the triangular group $\Bcal_n$ is a   Borel subgroup of $\Aut(\A_{\C}^n)$. This was already observed, in the case $n=2$ only, by Berest, Eshmatov and   Eshmatov in the nice paper  \cite{BEE} in which they obtained the following  strong results.  (In \cite{BEE}, these results  are stated for the group $\SAut(\A_{\C}^2)$ of polynomial automorphisms of $\A_{\C}^2$ of Jacobian determinant $1$, but all the  proofs remain valid  for $\Aut(\A_{\C}^2)$.)

\begin{theorem}[\cite{BEE}]\label{theorem:BEE}\begin{enumerate}
\item All Borel subgroups of $\Aut(\A_{\C}^2)$ are conjugate to $\Bcal_2$.
\item Every connected solvable subgroup of $\Aut(\A_{\C}^2)$ is conjugate to a subgroup of $\Bcal_2$.
\end{enumerate}
\end{theorem}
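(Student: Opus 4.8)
The plan is to deduce both statements from the Jung--van der Kulk--Nagata structure theorem, which presents $\Aut(\A^2_\C)$ as the amalgamated product $\Acal_2 \ast_{\Acal_2 \cap \Bcal_2} \Bcal_2$, together with the theory of groups acting on the associated Bass--Serre tree $T$ and Lamy's analysis of this action in \cite{Lamy}. The vertices of $T$ come in two types, with stabilizers the conjugates of $\Acal_2$ (type A) and of $\Bcal_2$ (type B), and every automorphism acts on $T$ either elliptically (fixing a vertex) or hyperbolically (translating along an axis); the hyperbolic elements are precisely the compositions of generalized H\'enon maps, for which $\deg(g^m) \to \infty$. Statement (1) then follows quickly from statement (2): a Borel subgroup $B$ is by definition a maximal connected solvable subgroup, so by (2) some conjugate $gBg^{-1}$ is contained in $\Bcal_2$; since $\Bcal_2$ is itself connected (Lemma~\ref{lemma:G_n and B_n are connected}) and solvable (Lemma~\ref{lemma:B_n is solvable}), while $gBg^{-1}$ remains maximal connected solvable, the inclusion $gBg^{-1} \subseteq \Bcal_2$ is forced to be an equality. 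Hence every Borel subgroup is conjugate to $\Bcal_2$.

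The heart of the matter is therefore statement (2), and the crucial point is to show that a connected solvable subgroup $H$ fixes a vertex of $T$. First I would verify that $H$ contains no hyperbolic element: connecting $\id$ to a hypothetical hyperbolic $g \in H$ by a curve lying in $H$ (as in the proof of Lemma~\ref{lemma:G_n and B_n are connected}) and using that the translation length on $T$ is an integer-valued function, locally constant along such families and vanishing at $\id$, one would force the translation length of $g$ to be zero, a contradiction. Thus every element of $H$ is elliptic. Since $H$ is moreover solvable it contains no nonabelian free subgroup, so by the classification of groups acting on trees $H$ either fixes a vertex, fixes a unique end, or stabilizes a line; in the last two situations there is a natural homomorphism from $H$ to the isometry group of a discrete line (a Busemann, respectively translation, homomorphism with values in $\Z$), and since $H$ is connected while $\Z$ is discrete this homomorphism is trivial, which together with the ellipticity of all elements should reduce these cases to $H$ fixing a vertex.

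Once $H \subseteq \Stab(v)$, the vertex $v$ is of type A or of type B. If $v$ is of type B, then $\Stab(v)$ is a conjugate of $\Bcal_2$ and we are done. If $v$ is of type A, then after a conjugation $H$ is a connected solvable subgroup of the affine group $\Acal_2 \cong \C^2 \rtimes \GL_2(\C)$; by the classical theory of linear algebraic groups it is contained in a Borel subgroup of $\Acal_2$, and all such Borels are conjugate in $\Acal_2$ to $\Acal_2 \cap \Bcal_2$, which lies in $\Bcal_2$. In either case $H$ is conjugate to a subgroup of $\Bcal_2$, which is statement (2). Note that solvability is genuinely used here, since without it $H$ could be all of $\Acal_2$.

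The step I expect to be the main obstacle is exactly the claim that a connected solvable $H$ has a fixed vertex, that is, acts with bounded orbits on $T$. The two reductions invoking connectedness are the delicate ones: ruling out hyperbolic elements requires that the translation length cannot jump within a connected family through $\id$, and the end-fixing case requires controlling the increasing union $H = \bigcup_m (H \cap \Stab(v_m))$ along a ray toward the fixed end and arguing that connectedness in the ind-topology prevents $H$ from being a strictly increasing union of proper closed subgroups --- a point for which a naive Baire-type argument on the degree filtration does not suffice, precisely because large degree need not mean large displacement in $T$. Both points rest on the fine compatibility between the ind-group structure on $\Aut(\A^2_\C)$ and the combinatorics of the tree established by Lamy in \cite{Lamy}, which is why this argument is specific to dimension two.
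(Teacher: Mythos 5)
The paper offers no proof of this statement: it is quoted verbatim from Berest--Eshmatov--Eshmatov \cite{BEE} (with the remark that their proofs for $\SAut(\A^2_{\C})$ carry over to $\Aut(\A^2_{\C})$), so there is no internal argument to compare yours against. Your overall architecture --- deduce (1) from (2), and prove (2) by making $H$ act on the Bass--Serre tree of ${\Acal_2}*_{\Acal_2\cap\Bcal_2}\Bcal_2$, splitting into the vertex-fixing, end-fixing and geodesic-preserving cases, and finishing the type-A vertex case with the classical theory of Borel subgroups of $\Acal_2$ --- is indeed the right skeleton and is essentially the one followed in \cite{BEE} and in Lamy's classification. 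But two of your steps do not hold up as written.

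First, the claim that the translation length on the tree is ``locally constant along connected families through $\id$'' is false, so your exclusion of hyperbolic elements collapses. Consider $f_t=(x+ty^2,y)\circ(x,y+tx^2)$: this is a morphism $\A^1_{\C}\to\Aut(\A^2_{\C})$ with $f_0=\id$, while for $t\neq 0$ one has $f_t=b_t\circ\sigma\circ b_t\circ\sigma$ with $b_t=(x+ty^2,y)\in\Bcal_2\smallsetminus\Acal_2$ and $\sigma=(y,x)$, i.e.\ $f_t$ is conjugate to $(\sigma\circ b_t)^2$ and is a H\'enon map of translation length $4$. So a connected curve through $\id$ can be generically hyperbolic; the only available semicontinuity (Proposition~\ref{prop: lower semicontinuity of l_A}) goes in the wrong direction for your argument, since it only says the length can \emph{drop} on closed subsets. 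Ruling out H\'enon elements in a connected solvable subgroup requires a genuine argument (e.g.\ via Lamy's case (3): $H$ preserves the common geodesic $\Gamma$, and one must show the resulting homomorphism $H\to\mathrm{Isom}(\Gamma)$ has trivial image on a connected group, which is exactly the kind of compatibility statement you cannot get for free). Second, you explicitly leave open the end-fixing case, where $H$ is a strictly increasing union of vertex stabilizers along a ray; as you yourself note, no Baire-type argument on the degree filtration settles this, and this case (Lamy's case (2), the abelian groups of elliptic elements with unbounded orbits) is precisely where the substantive work of \cite{BEE} lies. With both the hyperbolic exclusion and the end-fixing case unproved, the proposal reduces statement (2) to its two hardest points rather than establishing it.
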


Recall that there exist, for every $n\geq3$, connected solvable subgroups of $\Aut(\A_{\C}^n)$ that are not conjugate to subgroups of $\Bcal_n$ \cite{Bass, Popov87}.  Hence, the second statement of the above theorem does not hold for $\Aut(\A_{\C}^n)$, $n\geq3$. 
 Similarly, we believe that not all Borel subgroups of $\Aut(\A_{\C}^n)$ are conjugate to $\Bcal_n$ if $n\geq3$. This would be clearly the case, if we knew that the following question has a positive answer. 

\begin{question}Is every connected solvable subgroup of $\Aut(\A_{\C}^n)$, $n\geq3$, contained into a maximal connected solvable subgroup?
\end{question}

The natural strategy to attack the above question would be  to apply  Zorn's lemma, as we do in the proof of the following general proposition. 

\begin{proposition} \label{proposition:Zorn}
Let $G$ be a group endowed with a topology. Suppose that there exists an integer $c>0$ such that every solvable subgroup of $G$ is of derived length at most $c$. Then, every solvable  (resp.\ connected solvable) subgroup of $G$ is contained into a maximal solvable  (resp.\ maximal connected solvable) subgroup.
\end{proposition}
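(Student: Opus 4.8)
The plan is to apply Zorn's lemma to the poset of solvable (resp.\ connected solvable) subgroups containing a given one, ordered by inclusion, and to use the uniform bound $c$ precisely to control the solvability of unions of chains. Fix a solvable (resp.\ connected solvable) subgroup $H_0$ of $G$ and let $\mathcal{S}$ be the set of all solvable (resp.\ connected solvable) subgroups of $G$ containing $H_0$, partially ordered by inclusion. Then $\mathcal{S}$ is nonempty since $H_0 \in \mathcal{S}$, and it suffices to check that every nonempty chain in $\mathcal{S}$ has an upper bound in $\mathcal{S}$: a maximal element of $\mathcal{S}$ is then automatically a maximal solvable (resp.\ maximal connected solvable) subgroup of $G$, because any strictly larger solvable (resp.\ connected solvable) subgroup would still contain $H_0$ and hence already lie in $\mathcal{S}$.

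So let $\{H_i\}_{i \in I}$ be a chain in $\mathcal{S}$ and set $H := \bigcup_i H_i$. First I would note that $H$ is a subgroup, since any two of its elements lie in a common $H_i$ by totality of the chain, so products and inverses remain in $H$. The key step is then to prove that $H$ is solvable, and this is where the hypothesis enters. I would show by induction on $k$ that $D^k(H) = \bigcup_i D^k(H_i)$, the base case being $D^0(H) = H = \bigcup_i H_i$. The inclusion $\supseteq$ follows from monotonicity of the derived series; for $\subseteq$, any commutator $[g,h]$ with $g,h \in D^{k-1}(H) = \bigcup_i D^{k-1}(H_i)$ has both arguments in a common $D^{k-1}(H_i)$ (again by totality, since $\{D^{k-1}(H_i)\}$ is itself a chain), hence $[g,h] \in D^k(H_i)$, and $\bigcup_i D^k(H_i)$ is a subgroup. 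Applying this with $k = c$ and using $D^c(H_i) = \{1\}$ for every $i$ gives $D^c(H) = \{1\}$, so $H$ is solvable of derived length at most $c$. This already settles the solvable case.

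In the connected case I would additionally need $H$ to be connected for the subspace topology. Here I would invoke the standard topological fact that a union of connected subspaces with a common point is connected: every $H_i$ contains the identity of $G$, so the members of the chain share a point, and therefore $H = \bigcup_i H_i$ is connected. Combined with the solvability just established, this shows $H \in \mathcal{S}$, so $H$ is an upper bound for the chain, and Zorn's lemma yields the desired maximal element.

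The main obstacle — indeed the only point where something beyond formal manipulation is required — is the solvability of the union $H$. Without the uniform bound $c$, a chain of solvable subgroups of strictly increasing derived length could well have a non-solvable union, so that the chain would admit no upper bound inside the poset and Zorn's argument would collapse. The hypothesis that every solvable subgroup has derived length at most $c$ is exactly what forces $D^c(H) = \{1\}$ and keeps the union within $\mathcal{S}$.
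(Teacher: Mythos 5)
Your proof is correct and follows essentially the same route as the paper: apply Zorn's lemma to the poset of solvable (resp.\ connected solvable) subgroups containing the given one, use the identity $D^k(\bigcup_i H_i)=\bigcup_i D^k(H_i)$ together with the uniform bound $c$ to show that the union of a chain remains solvable, and note that a union of connected subgroups sharing the identity is connected. The only difference is that you spell out the induction and the maximality transfer in more detail than the paper does.
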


\begin{proof}
Let $H$ be a solvable (resp.\ connected solvable) subgroup of $G$. Denote by $\Fcal$ the set of solvable (resp.\ connected solvable) subgroups of $G$ that contain $H$. Our hypothesis, on the existence of the bound $c$, implies that the poset $(\Fcal, \subseteq )$ is inductive. Indeed, if $(H_i)_{i \in I}$ is a chain in $\Fcal$, i.e.\ a totally ordered family of $\Fcal$, then the group $\bigcup_iH_i$ is solvable, because we have that
\[ D^j ( \bigcup_iH_i) = \bigcup_i D^j(H_i)\]
for each integer $j \geq 0$. Moreover, if all $H_i$ are connected, then so is their union. Thus, $\Fcal$ is inductive and we can conclude by Zorn's lemma.
\end{proof}

\begin{remark} Proposition \ref{proposition:Zorn} does not require any compatibility conditions between the group structure and the topology on $G$. Let us moreover recall that an algebraic group (and all the more an ind-group) is in general not a topological group.
\end{remark}

We are now left with another  concrete question.

\begin{definition}
Let $G$ be a group. We  set
\[ \psi (G) := \sup \{ l(H) \mid H \text{ is a solvable subgroup of } G \} \in \N \cup \{ + \infty \}, \]
where $l(H)$ denotes the derived length of $H$.
\end{definition}

\begin{question} \label{qu:borne}
Is   $\psi ( \Aut(\A_{\C}^n) )$ finite?
\end{question}

 Recall  that $\psi ( \textrm{GL}(n,\C) )$ is finite. This classical result has been  first established in 1937 by Zassenhaus \cite{Zassenhaus}*{Satz 7} (see also \cite{Malcev}). More recently, Martelo and Rib\'on have proved in \cite{Martelo-Ribon} that $\psi \big( ( \Ocal_{\rm ana} ( \C^n), 0) \big) < + \infty$, where $( \Ocal_{\rm ana} ( \C^n), 0)$ denotes the group of germs of analytic diffeomorphisms defined in a neighbourhood of the origin of $\C^n$.

Our next result answers Question \ref{qu:borne} in the case $n=2$.

\begin{proposition} \label{prop: bound for the derived length of solvable subgroups of Aut(A2)}
We have $\psi ( \Aut(\A_{\C}^2) ) = 5$.
\end{proposition}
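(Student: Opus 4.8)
The plan is to establish the two inequalities $\psi(\Aut(\A^2_\C)) \ge 5$ and $\psi(\Aut(\A^2_\C)) \le 5$ separately: the first by producing one explicit solvable subgroup of derived length $5$, the second by analysing an arbitrary solvable subgroup through its action on the Bass--Serre tree of the Jung--van der Kulk--Nagata amalgamated-product structure. For the lower bound I would work inside the affine subgroup $\Acal_2 = \C^2 \rtimes \GL_2(\C)$. Take the binary octahedral group $\Gamma \subset \mathrm{SU}(2) \subset \GL_2(\C)$, whose derived series is $\Gamma \supset 2T \supset Q_8 \supset \{\pm I\} \supset \{1\}$, so that $\Gamma$ is solvable of derived length $4$. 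Setting $G := \C^2 \rtimes \Gamma \subseteq \Acal_2$ (with $\C^2$ the translations), the key point is that each of $\Gamma$, $2T$, $Q_8$, $\{\pm I\}$ acts on $\C^2$ without nonzero fixed vector, so the commutators $[\C^2, \cdot\,]$ recover all of $\C^2$; this forces $D^j(G) = \C^2 \rtimes D^j(\Gamma)$ for $0 \le j \le 3$, then $D^4(G) = \C^2$ and $D^5(G) = \{1\}$. Hence $l(G) = 5$, and already the affine subgroup achieves the bound.

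For the upper bound, recall that the Jung--van der Kulk--Nagata theorem writes $\Aut(\A^2_\C) = \Acal_2 *_{C} \Bcal_2$ with $C := \Acal_2 \cap \Bcal_2$, so $\Aut(\A^2_\C)$ acts on its Bass--Serre tree $T$, whose vertex stabilizers are the conjugates of $\Acal_2$ and $\Bcal_2$ and whose edge stabilizers are the conjugates of $C$. I would record three derived-length inputs: $\Bcal_2$ has derived length $3$ (Lemma~\ref{lemma:B_n is solvable} with $n=2$); the group $C$ of triangular affine maps identifies with invertible upper triangular $3\times 3$ matrices and is solvable of derived length $3$; and $\psi(\Acal_2) = 5$, obtained from the extension $1 \to \C^2 \to \Acal_2 \to \GL_2(\C) \to 1$ together with $\psi(\GL_2(\C)) = 4$, the latter following from $\psi(\PGL_2(\C)) = 3$ (extremal solvable subgroup $S_4$) via a central extension. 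Now let $H$ be a solvable subgroup. Since a solvable group contains no nonabelian free subgroup, Lamy's Tits alternative for $\Aut(\A^2_\C)$ \cite{Lamy} guarantees that $H$ is elliptic, fixes an end of $T$, or stabilizes a line of $T$.

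In the \emph{elliptic} case $H$ is conjugate into a vertex stabilizer, so $l(H) \le \max(\psi(\Acal_2), \psi(\Bcal_2)) = \max(5,3) = 5$. If $H$ \emph{stabilizes a line} $L$, the restriction homomorphism $H \to \mathrm{Isom}(L) = D_\infty$ has solvable image of derived length at most $2$ and kernel equal to the pointwise stabilizer of $L$, which lies in an edge stabilizer and hence has derived length at most $3$; therefore $l(H) \le 3 + 2 = 5$. If $H$ \emph{fixes an end} $\xi$, I would use the Busemann homomorphism $\beta \colon H \to \Z$ measuring the shift towards $\xi$ along a fixed ray $r = (v_0, v_1, \dots)$ converging to $\xi$. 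Its kernel $H_0$ consists of elliptic elements fixing $\xi$, each of which fixes a tail of $r$; thus $H_0 = \bigcup_n \Stab(v_n, v_{n+1}, \dots)$ is an increasing union of groups conjugate into $C$. Using $D^3(\bigcup_n G_n) = \bigcup_n D^3(G_n)$ for an increasing chain (exactly as in the proof of Proposition~\ref{proposition:Zorn}), one gets $l(H_0) \le 3$, whence $l(H) \le l(H_0) + 1 \le 4$. In all three cases $l(H) \le 5$, so $\psi(\Aut(\A^2_\C)) \le 5$ and equality holds.

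The delicate points are the geometric trichotomy, namely invoking Lamy's theorem to ensure that a free-subgroup-free action on $T$ has an invariant vertex, end, or line, and the fixed-end case, where the filtration of $H_0$ as an increasing union of edge stabilizers is precisely what keeps its derived length down to $3$. I would also expect the exact evaluation $\psi(\Acal_2) = 5$ (through $\psi(\PGL_2(\C)) = 3$, i.e. the classification of solvable subgroups of $\PGL_2(\C)$ with $S_4$ extremal, and the passage through the central and affine extensions) to demand the most bookkeeping, even though conceptually it is routine.
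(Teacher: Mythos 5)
Your proof is correct, and its two pillars coincide with the paper's: the lower bound comes from the same group $2\mathrm{O}\ltimes\C^2\subseteq\Acal_2$ (your verification that $D^j(\C^2\rtimes\Gamma)=\C^2\rtimes D^j(\Gamma)$ via the absence of nonzero fixed vectors plays exactly the role of the paper's Lemma~\ref{lem:derived-subgroups-of-Linverse} on finite non-cyclic subgroups, and the evaluation $\psi(\Acal_2)=5$ through $\psi(\GL_2(\C))=4$ is the paper's Proposition~\ref{lem:psi(A2)}), and the upper bound is obtained by the same device, the action on the Bass--Serre tree of $\Acal_2 *_{\Acal_2\cap\Bcal_2}\Bcal_2$. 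The genuine difference lies in the case analysis. The paper invokes Lamy's Theorem~2.4 verbatim, which packages the trichotomy together with extra structural information (a subgroup all of whose elements are triangularizable but which is not conjugate into a factor is \emph{abelian}), and in the geodesic-stabilizer case it uses Lamy's Proposition~3.3 (elements fixing an unbounded subtree have finite order) to show that the pointwise stabilizer of the geodesic is abelian, yielding the sharper bound $l(H)\le 3$ there. You instead rely on the general vertex/end/line trichotomy for actions without nonabelian free subgroups, the Busemann homomorphism in the end case, and the $D_\infty$ quotient in the line case, settling for the cruder estimates $3+1=4$ and $3+2=5$ that come from the derived length $3$ of the edge stabilizers (conjugates of $\Acal_2\cap\Bcal_2$). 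Since only the bound $5$ is needed, this is a legitimate and slightly more self-contained variant: it trades the finer statements of \cite{Lamy} for the standard Bass--Serre trichotomy (which is not literally Lamy's Theorem~2.4, so you should cite it in that general form, e.g.\ Serre or Pays--Valette, rather than attributing it to \cite{Lamy}); the price is that you lose the sharper conclusions (abelian, resp.\ derived length $\le 3$) that the paper obtains in the non-elliptic cases.
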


\begin{proof}
The proof relies on  a precise description of all subgroups of $\Aut(\A_{\C}^2)$, due to Lamy, that we will recall below. Using this description, the equality $\psi ( \Aut(\A_{\C}^2) )=5$ directly follows from the equality  $\psi(\Acal_2)=5$ that we will establish in the next section (see Proposition~\ref{lem:psi(A2)}). The description of all subgroups of $\Aut(\A_{\C}^2)$ given by Lamy uses the amalgamated structure of this group, generally known as the theorem of Jung, van der Kulk and Nagata: The group $\Aut(\A_{\C}^2)$ is the amalgamated product of its subgroups $\Acal_2$ and $\Bcal_2$ over their intersection
\[ \Aut(\A_{\C}^2) = {\Acal_2} *_{\Acal_2 \cap \Bcal_2} \Bcal_2. \]
In the discussion below, we will use the Bass-Serre tree associated to this amalgamated structure. We refer the reader to \cite{Serre} for details on  Bass-Serre trees in full generality and to \cite{Lamy}  for details on the particular tree associated to the above amalgamated structure. That latter tree is the tree whose vertices are the left cosets $g \circ\Acal_2$ and $h \circ\Bcal_2$, $g,h \in \Aut(\A_{\C}^2)$. Two vertices $g \circ\Acal_2$ and $h \circ\Bcal_2$ are related by an edge if and only if there exists an element $k \in \Aut(\A_{\C}^2)$ such that $g \circ\Acal_2 = k \circ\Acal_2$ and $h \circ\Bcal_2 = k\circ \Bcal_2$, i.e. if and only if $g^{-1}\circ h \in \Acal_2 \circ \Bcal_2$. The group  $\Aut(\A_{\C}^2)$ acts on the Bass-Serre tree by left translation: For all $g,h \in  \Aut(\A_{\C}^2)$, we set $g. (h\circ \Acal_2) =(g\circ h)\circ \Acal_2 $  and  $g. (h \circ\Bcal_2) =(g\circ h)\circ \Bcal_2 $. Each element of $\Aut(\A_{\C}^2)$ satisfies one property of the following alternative:
\begin{enumerate}
\item It is triangularizable, i.e. conjugate to an element of $\Bcal_2$. This is the case where the automorphism fixes at least one point on the Bass-Serre tree.
\item It is a Hénon automorphism, i.e. it is conjugate to an element of the form
\[ g =a_1 \circ b_1 \circ \cdots \circ a_k \circ b_k,\]
where $k \geq 1$, each $a_i$ belongs to $\Acal_2 \smallsetminus \Bcal_2$ and each $b_i$ belongs to $\Bcal_2 \smallsetminus \Acal_2$. This is the case where the automorphism acts without fixed points, but preserves  a (unique) geodesic of the Bass-Serre tree  on which it acts as a translation of length $2k$.
\end{enumerate}
Furthermore, according to \cite{Lamy}*{Theorem 2.4}, every subgroup $H$ of $\Aut(\A_{\C}^2)$ satisfies one and only one of the following assertions:
\begin{enumerate}
\item It is conjugate to a subgroup of $\Acal_2$ or of $\Bcal_2$.
\item Every element of $H$ is triangularizable and $H$ is not conjugate to a subgroup of $\Acal_2$ or of $\Bcal_2$. In that case, $H$ is Abelian.
\item The group $H$ contains some Hénon automorphisms (i.e.\ non triangularizable automorphisms) and all those have the same geodesic on the Bass-Serre tree. The group $H$ is then solvable.
\item The group $H$ contains two Hénon automorphisms having different geodesics. Then, $H$ contains a free group with two generators.
\end{enumerate}
Let $H$ be now a solvable subgroup of $\Aut(\A_{\C}^2)$. If we are in case (1), then we may assume that $H$ is a subgroup of  $\Acal_2$ or of $\Bcal_2$. Since $ \psi ( \Acal_2) = 5$ and $\psi ( \Bcal_2) = 3$ (the group $\Bcal_2$ being  solvable of derived length $3$), this settles this case. In case $(2)$, $H$ is Abelian hence of derived length at most $1$. In case (3), there exists a geodesic $\Gamma$ which is globally fixed by every element of $H$. Therefore, we may assume without restriction that
\[ H= \{ f \in \Aut(\A_{\C}^2),  \; f ( \Gamma) = \Gamma \}.\]
Note that $D^2(H)$ is included into the group $K$ that fixes pointwise the geodesic $\Gamma$. Up to conjugation, we may assume that $\Gamma$ contains the vertex $\Bcal_2$, i.e.\ that $K$ is included into $\Bcal_2$. By \cite{Lamy}*{Proposition 3.3}, each element of $\Aut (\A^2_{\C})$ fixing an unbounded set of the Bass-Serre tree has finite order. If $f,g \in K$, their commutator is of the form $(x+ p(y), y +c)$. This latter automorphism being of finite order, it must be equal to the identity, showing that $K$ is Abelian. Therefore, we get $D^3(H) = \{ 1 \}$.

Finally, we cannot be in case (4), because a free group with two generators is not solvable.
\end{proof}

From Propositions \ref{proposition:Zorn} and \ref{prop: bound for the derived length of solvable subgroups of Aut(A2)}, we get at once the following result, which also follows from   Theorem  \ref{theorem:BEE} above.

\begin{corollary}\label{cor:connected into Borel}
Every solvable connected subgroup of $\Aut(\A_{\C}^2)$ is contained into a Borel subgroup.
\end{corollary}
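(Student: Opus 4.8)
The plan is to deduce the statement formally from the two immediately preceding propositions, together with the fact that a Borel subgroup of the ind-group $\Aut(\A_{\C}^2)$ is \emph{by definition} a maximal connected solvable subgroup. Under this reading, the assertion amounts to showing that every connected solvable subgroup is contained in a maximal one, so I would not attempt to produce such a maximal overgroup by hand but rather obtain its existence abstractly.

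The key input is the uniform bound furnished by Proposition~\ref{prop: bound for the derived length of solvable subgroups of Aut(A2)}: the equality $\psi(\Aut(\A_{\C}^2)) = 5$ says precisely that every solvable subgroup of $\Aut(\A_{\C}^2)$ has derived length at most $5$. This is exactly the hypothesis needed to invoke Proposition~\ref{proposition:Zorn} with the constant $c = 5$ (endowing $\Aut(\A_{\C}^2)$ with its ind-topology). Applying the connected solvable variant of that proposition, I conclude that any connected solvable subgroup $H$ of $\Aut(\A_{\C}^2)$ is contained in a maximal connected solvable subgroup, that is, in a Borel subgroup. This completes the argument.

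Since the genuine work has already been carried out in establishing the finiteness of $\psi$ (which rests on Lamy's classification of subgroups) and in the Zorn-type Proposition~\ref{proposition:Zorn}, there is essentially no further obstacle here: the corollary is a purely formal consequence. For completeness I would also record the alternative route via Theorem~\ref{theorem:BEE}(2): any connected solvable subgroup of $\Aut(\A_{\C}^2)$ is conjugate to a subgroup of $\Bcal_2$, and since $\Bcal_2$ is itself a Borel subgroup by Proposition~\ref{theorem:triangular automorphisms are a Borel subgroup}, such a subgroup is contained in the corresponding conjugate of $\Bcal_2$, which is again a Borel subgroup.
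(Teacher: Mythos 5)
Your argument is exactly the paper's: the corollary is stated as an immediate consequence of Proposition~\ref{proposition:Zorn} combined with the bound $\psi(\Aut(\A_{\C}^2))=5$ from Proposition~\ref{prop: bound for the derived length of solvable subgroups of Aut(A2)}, and the paper likewise records the alternative derivation from Theorem~\ref{theorem:BEE}. Both routes you give are correct and match the text.
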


\subsection{Proof of the equality  $\psi (\Acal_2) =5$.} 

Recall that Newman \cite{New} has   computed  the exact value $\psi ( \textrm{GL}(n,\C) )$ for all $n$. It turns out that $\psi ( \textrm{GL}(n,\C) )$ is equivalent to $5\log_9(n)$ as $n$ goes to infinity (see \cite{Wehrfritz}*{Theorem 3.10}). Let us give a few particular values for $\psi ( \textrm{GL}(n,\C) )$ taken from  \cite{New}.

\[
\begin{tabular}{|c|l|l|l|l|l|l|l|l|l|l|l|l|l|l|l|}
\hline
$n$  & $1$ & $2$ & $3$ & $4$ & $5$  & $6$ & $7$ & $8$ & $9$ & $10$ & $18$ & $26$ & $34$ & $66$ & $74$ \\
\hline
$\psi ( \textrm{GL}(n,\C) ) $  & $1$  & $4$ & $5$ & $6$ & $7$ & $7$ & $7$ & $8$ & $9$ & $10$ & $11$ & $12$ & $13$ & $14$ & $15$ \\
\hline
\end{tabular}
\]
\medskip

We now consider the affine group $\Acal_n$. On the one hand, observe that $\Acal_n$ is isomorphic to a subgroup of $\GL(n+1,\C)$. Hence, $\psi(\Acal_n)\leq\psi(\GL(n+1,\C))$. On the other hand, we have the short exact sequence
\[ 1 \to \C^n \to \Acal_n \xrightarrow{L}  \GL_n (\C )  \to 1,\]
where $L \colon \Acal_n  \to \GL (n, \C)$ is the natural morphism  sending an affine transformation to its linear part. 
Thus, if $H$ is a solvable subgroup of $\Acal_n$, we have a short exact sequence
\[ 1 \to H \cap (\C^n) \to H \xrightarrow{L}  L(H)  \to 1.\]
Since $L(H)$ is solvable of derived length at most $\psi ( \GL_n (\C) )$ and since $H \cap (\C^n)$ is Abelian, this implies that $l( H) \leq \psi ( \GL_n (\C ) ) + 1$.
Therefore, we have proved the general formula
\[\psi ( \GL_n (\C ) ) \leq\psi(\Acal_n)\leq\min\{\psi(\GL(n,\C))+1, \psi(\GL(n+1,\C))\}.\]
For $n=2$, this yields $\psi(\Acal_2)=4$ or $5$. We shall now prove that $\Acal_2$  contains  solvable subgroups of derived length 5 (see Lemma~\ref{lem: The derived length of G rtimes C2 is 5} below), hence the following desired result.

\begin{proposition} \label{lem:psi(A2)}
The maximal derived length of a solvable subgroup of the affine group $\Acal_2$ is $5$, i.e. we have $\psi(\Acal_2)=5$.
\end{proposition}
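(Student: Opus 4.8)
The upper bound $\psi(\Acal_2) \le 5$ is already in hand from the general estimate established just above (for $n=2$ it reads $\psi(\Acal_2) \le \psi(\GL(2,\C))+1 = 5$, using Newman's value $\psi(\GL(2,\C))=4$), so the only remaining possibility to exclude is $\psi(\Acal_2)=4$. The plan is therefore to exhibit a single solvable subgroup of $\Acal_2$ of derived length exactly $5$; this is the content of the promised Lemma~\ref{lem: The derived length of G rtimes C2 is 5}. I would look for it among the semidirect products $\Gamma := \C^2 \rtimes G$, realized inside $\Acal_2$ as the affine maps $x \mapsto g\,x+v$ with $g \in G \le \GL_2(\C)$ and $v \in \C^2$, where $G$ is a solvable subgroup of derived length $4$. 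The short exact sequence $1 \to \C^2 \to \Gamma \to G \to 1$ already gives $l(\Gamma) \le l(G)+1 = 5$, so the whole point will be to force $D^4(\Gamma) \neq \{1\}$.

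For $G$ I would take the binary octahedral group, i.e.\ the preimage of the octahedral group $S_4 \subset \PGL_2(\C) \cong \PSL_2(\C)$ under the double cover $\SL_2(\C) \to \PSL_2(\C)$. Its derived series is
\[ G \supset 2T \supset Q_8 \supset \{\pm\id\} \supset \{1\} \]
(binary tetrahedral group, quaternion group, center), which confirms $l(G)=4$ and, decisively, shows that $-\id$ belongs to $D^k(G)$ for every $k \in \{0,1,2,3\}$.

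The mechanism I would exploit is that the translation part cannot die out before level $4$, precisely because $-\id$ survives all the way down the derived series of $G$. Writing $V_k := D^k(\Gamma) \cap \C^2$, the inclusion $V_k \subseteq V_{k-1}$ holds trivially since the derived series is descending, while the elementary commutator identity $[(v,\id),(w,-\id)] = (2v,\id)$ (valid whenever $(w,-\id) \in D^{k-1}(\Gamma)$ and $v \in V_{k-1}$) shows $V_k \supseteq (\id-(-\id))\,V_{k-1} = V_{k-1}$, because $\id-(-\id)=2\,\id$ is invertible. As $-\id \in D^{k-1}(G)$ supplies the required element $(w,-\id)$ at each stage, and as $V_1 = [\C^2,G] = \C^2$ for the same reason, I obtain $V_1 = V_2 = V_3 = V_4 = \C^2$. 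Since $D^4(G) = \{1\}$, this means $D^4(\Gamma) = V_4 = \C^2 \neq \{1\}$, hence $l(\Gamma) = 5$ and therefore $\psi(\Acal_2) = 5$.

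The main obstacle --- and the only genuinely non-formal point --- is exactly the danger that iterated commutators shrink the translation part to $\{0\}$ before the fourth step; this is what is ruled out by demanding that the derived series of $G$ contain a fixed-point-free linear map (here $-\id$) throughout, so that $\id - (-\id)$ stays invertible at every stage. The residual work is bookkeeping: identifying the derived series of the binary octahedral group and verifying the commutator identity above.
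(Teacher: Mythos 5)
Your proof is correct, and its overall architecture coincides with the paper's: the same upper bound $\psi(\Acal_2)\leq\min\{\psi(\GL(2,\C))+1,\psi(\GL(3,\C))\}=5$ via the two exact sequences and Newman's values, and the same witness subgroup $2{\rm O}\ltimes\C^2$ for the lower bound, with the same derived series $2{\rm O}\supset 2{\rm T}\supset {\rm Q}_8\supset\{\pm\id\}\supset\{1\}$. Where you genuinely diverge is in the mechanism showing that the translation part survives down to $D^4$. The paper proves a standalone lemma (Lemma~\ref{lem:derived-subgroups-of-Linverse}): for any finite \emph{non-cyclic} $H\leq\GL(2,\C)$ one has $D(H\ltimes\C^2)=D(H)\ltimes\C^2$, proved by showing that the commutators $[\id+v,h]=h\cdot v-v$ span $\C^2$ unless $H$ is conjugate into a group of matrices $\bigl(\begin{smallmatrix}a&0\\0&1\end{smallmatrix}\bigr)$, which uses a little representation theory of finite groups; this is applied iteratively for $i\leq 3$ (using that $D^2(G)\simeq{\rm Q}_8$ is non-cyclic), and the final step $D^4\neq\{1\}$ comes from observing that $\{\pm\id\}\ltimes\C^2$ is non-Abelian since $\C^2$ is its own centralizer in $\Acal_2$. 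You instead exploit the single element $-\id$, which lies in $D^k(G)$ for all $k\leq 3$, together with the identity $[(v,\id),(w,-\id)]=(2v,\id)$ (which I checked with the paper's convention $[g,h]=ghg^{-1}h^{-1}$), to regenerate all of $\C^2$ inside each $D^k(\Gamma)$ up to $k=4$ directly. Your criterion --- a fixed-point-free linear map surviving in every term of the derived series of $G$ --- is more elementary (no representation theory, no non-cyclicity hypothesis) and even gives the slightly sharper conclusion $D^4(\Gamma)=\C^2$; the paper's lemma is more general in a different direction, as it applies to any finite non-cyclic linear part regardless of whether a fixed-point-free element is available.
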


As explained above, it still remains to  provide an  example of   a solvable subgroup of $\Acal_2$ of derived length $5$.  In that purpose, recall that the group $\PSL (2, \C)$   contains a subgroup isomorphic to the symmetric group $S_4$ and that all such subgroups are conjugate (see for example \cite{Beauville}). 

\begin{definition}
The \emph{binary octahedral group} $ 2 {\rm O}$ is the pre-image of the symmetric group $S_4$ by the $(2:1)$-cover $\SL (2, \C) \to \PSL (2, \C)$.
\end{definition}

The following result is also well-known.

\begin{lemma} \label{lemma: length of the binary octahedral group}
The derived length of the binary octahedral group $G=  2 {\rm O}$ is $4$.
\end{lemma}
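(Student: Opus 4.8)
The plan is to compute the derived series of $G = 2\mathrm{O}$ directly, using the structure of $S_4$ together with the central extension $1 \to \{\pm I\} \to 2\mathrm{O} \to S_4 \to 1$. First I would recall the derived series of $S_4$ itself: one has $D(S_4) = A_4$, then $D(A_4) = V_4$ (the Klein four-group, generated by the products of two disjoint transpositions), and finally $D(V_4) = \{1\}$ since $V_4$ is abelian. Hence $S_4$ is solvable of derived length exactly $3$, with series $S_4 \supsetneq A_4 \supsetneq V_4 \supsetneq \{1\}$.

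Next I would lift this to $G$ via the surjection $\pi \colon G \to S_4$ with central kernel $Z = \{\pm I\}$. Because $\pi$ is surjective, $\pi(D^k(G)) = D^k(S_4)$ for every $k$, so $D^3(G)$ maps onto $D^3(S_4) = \{1\}$, which forces $D^3(G) \subseteq Z$. The essential point is then to decide whether $D^3(G)$ is all of $Z$ or is trivial: if $D^3(G) = Z \neq \{1\}$, then $D^4(G) = D(Z) = \{1\}$ (as $Z$ is abelian) and the derived length is exactly $4$; if instead $D^3(G) = \{1\}$, the length would only be $3$.

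The main obstacle, and the crux of the argument, is therefore to show that $D^3(G)$ is nontrivial, i.e.\ that $-I$ lies in the third derived subgroup. The cleanest way I would do this is to exhibit explicit elements realizing a nontrivial commutator at the deepest level: I would choose preimages $g, h \in G$ of two distinct commuting products of disjoint transpositions in $V_4 = D^2(S_4)$ and compute $[g,h]$ in $\mathrm{SL}(2,\mathbb{C})$. Since $g$ and $h$ project into $D^2(S_4)$, both lie in $D^2(G)$, so $[g,h] \in D^3(G)$; and since the two chosen elements of $V_4$ commute in $S_4$ but their lifts need not commute in the nonabelian extension $G$, the commutator $[g,h]$ lands in $Z$ and one checks it equals $-I$. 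Concretely, the lifts of the two generators of $V_4$ can be taken to be order-$4$ elements squaring to $-I$ (analogous to the quaternionic relations $\mathbf{i}^2 = \mathbf{j}^2 = -1$, $\mathbf{ij} = -\mathbf{ji}$), so their commutator is exactly $-I$. This realizes $-I \in D^3(G)$, giving $D^3(G) = Z$ and hence $D^4(G) = \{1\}$ with $D^3(G) \neq \{1\}$, so the derived length is $4$. An alternative, less computational route would be to invoke the known fact that the binary octahedral group contains the quaternion group $Q_8$ (whose derived subgroup is $Z$) as the preimage of $V_4$, and to trace $Z$ through the derived series; I would keep this as a backup in case the explicit matrix computation becomes unwieldy.
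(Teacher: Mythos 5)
Your proposal is correct and follows essentially the same route as the paper: both pass through the central extension $1 \to \{\pm I\} \to G \to S_4 \to 1$, reduce to showing that $D^3(G)$ contains $-I$, and exploit the fact that the preimage of $V_4 = D^2(S_4)$ is the quaternion group $Q_8$ (the paper pins down $D^2(G) = \pi^{-1}(V_4) \simeq Q_8$ by noting that a proper subgroup of $Q_8$ is cyclic while $V_4$ is not, then uses that $Q_8$ has derived length $2$; your main line instead computes the commutator $[\mathbf{i},\mathbf{j}] = -I$ directly, and you offer the $Q_8$ route as a backup). One small repair: your assertion that $g$ and $h$ lie in $D^2(G)$ \emph{because} they project into $D^2(S_4)$ is not a valid inference as stated (a priori $D^2(G)$ could be a proper subgroup of $\pi^{-1}(V_4)$); you should either argue as the paper does that $D^2(G) = \pi^{-1}(V_4)$, or note that $\pi(D^2(G)) = V_4$ guarantees \emph{some} lift of each generator lies in $D^2(G)$ and that the commutator is independent of the choice of lift since the kernel is central, so $[g,h] = -I$ indeed lands in $D^3(G)$.
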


\begin{proof}
Using the short exact sequence
\[ 0 \to \{ \pm I \}  \to G \xrightarrow{\pi} S_4 \to 0, \]
we get $\pi ( D^2G) = D^2 (\pi (G) ) = D^2(S_4) = \V_4$, where $V_4 \simeq \Z_2 \times \Z_2$ is the Klein group. One could also easily check that $\pi^{-1} ( \V_4)$ is isomorphic to  the quaternion group ${\rm Q}_8$. The equality $\pi ( D^2G) = \V_4$ is then sufficient for showing that $D^2G = \pi^{-1} (\V_4)$. Indeed, if $D^2G$ was a strict subgroup of $\pi^{-1} (\V_4) \simeq {\rm Q}_8$, it would be cyclic, hence    $\pi (D^2G) = \V_4$ would be cyclic too. A contradiction. Since $D^2G \simeq {\rm Q}_8$ has derived length $2$, this shows us that the derived length of $G$ is $2+2 = 4$.
\end{proof}

\begin{lemma}  \label{lem: The derived length of G rtimes C2 is 5}
Consider the pre-image $L^{-1}(G)\simeq G \ltimes \C^2$ of the  binary octahedral group $G:= 2 {\rm O} \subseteq \SL (2, ÷C)$   by the natural morphism $ L \colon \Acal_2  \to \GL (2, \C)$ sending an affine transformation onto its linear part. Then, the derived length of   $L^{-1}(G)$   is equal to $5$.
\end{lemma}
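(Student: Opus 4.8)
The plan is to read off the derived series of $H := L^{-1}(G) \simeq G \ltimes \C^2$ directly from that of $G$, carefully tracking the vector part $\C^2$ at each stage. Since $H$ surjects onto $G$, which has derived length $4$ by Lemma~\ref{lemma: length of the binary octahedral group}, I immediately get $l(H) \geq 4$; and since the kernel $\C^2$ is abelian, $l(H) \leq 5$. Everything therefore reduces to deciding whether $D^4(H)$ is trivial, and I claim it is not.

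The main tool I would isolate first is a commutator formula for a semidirect product $K \ltimes \C^2$, where $K \subseteq \GL(2,\C)$ is any subgroup acting linearly: writing $M_K := \langle (k - \id)v \mid k \in K, \; v \in \C^2 \rangle$ for the augmentation submodule, one has
\[ D(K \ltimes \C^2) = D(K) \ltimes M_K. \]
To prove this I would quotient by the normal subgroup $\{1\} \ltimes M_K$: since $K$ acts trivially on the coinvariants $\C^2 / M_K$, the quotient is the \emph{direct} product $K \times (\C^2 / M_K)$, whose derived subgroup is $D(K) \times \{0\}$, giving the inclusion $\subseteq$. The reverse inclusion comes from the two families of commutators $[(k_1,0),(k_2,0)] = ([k_1,k_2],0)$ and $[(k,0),(1,v)] = (1, (k-\id)v)$, which together generate $D(K) \ltimes M_K$.

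With this formula in hand the computation becomes mechanical, once one observes the arithmetic fact that $-I$ lies in $D^k(G)$ for every $0 \leq k \leq 3$. Indeed, the computation in Lemma~\ref{lemma: length of the binary octahedral group} gives $D^2(G) \simeq \mathrm{Q}_8$ and hence $D^3(G) = D(\mathrm{Q}_8) = \{\pm I\}$, and the derived series is decreasing, so $-I \in D^k(G)$ for all $k \leq 3$. Since $-I$ acts on $\C^2$ as $-\id$, we have $(-I - \id)v = -2v$, whence $M_{D^k(G)} = \C^2$ for each $k \in \{0,1,2,3\}$. Feeding this into the formula iteratively yields $D^{k+1}(H) = D^{k+1}(G) \ltimes \C^2$ for $k = 0,1,2,3$; in particular $D^4(H) = D^4(G) \ltimes \C^2 = \{1\} \ltimes \C^2 = \C^2$, which is nontrivial, and then $D^5(H) = D(\C^2) = \{1\}$. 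This shows $l(H) = 5$.

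The only genuinely delicate point is the verification of the semidirect-product commutator formula, specifically that the vector part of $D(K \ltimes \C^2)$ is exactly $M_K$ and no larger; the coinvariants argument sketched above is what makes this clean and, pleasantly, avoids any appeal to irreducibility of the representation (the single element $-I$ already forces $M_{D^k(G)}$ to fill up $\C^2$). Everything else is bookkeeping along the derived series $G \supseteq 2\mathrm{T} \supseteq \mathrm{Q}_8 \supseteq \{\pm I\} \supseteq \{1\}$ of the binary octahedral group recorded in the previous lemma.
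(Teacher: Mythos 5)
Your proof is correct, and while its overall skeleton matches the paper's (compute the derived series of $G\ltimes\C^2$ by showing that at each stage the translation part fills up all of $\C^2$), the key lemma and the endgame are genuinely different. The paper isolates Lemma~\ref{lem:derived-subgroups-of-Linverse}: for a finite \emph{non-cyclic} $H\subseteq\GL(2,\C)$ one has $D(H\ltimes\C^2)=D(H)\ltimes\C^2$, proved by a representation-theoretic contradiction (if the vectors $h\cdot v-v$ spanned only a line, $H$ would be simultaneously diagonalizable with a common eigenvalue-$1$ eigenvector, hence cyclic). This forces the paper to stop the iteration at $D^3$ (it needs $D^2(G)\simeq\mathrm{Q}_8$ non-cyclic) and to finish by a separate argument that $D^3(G)\ltimes\C^2$ is non-abelian because $\C^2$ is its own centralizer in $\Acal_2$. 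You instead prove the sharper general identity $D(K\ltimes\C^2)=D(K)\ltimes M_K$ with $M_K$ the augmentation submodule, via the coinvariants quotient, and then observe that $-I\in D^k(G)$ for $k\le 3$ already forces $M_{D^k(G)}=\C^2$; this lets you run the iteration one step further and compute $D^4(H)=\C^2$ exactly, with no appeal to non-cyclicity, to finite-group representation theory, or to the centralizer of $\C^2$. Your version buys uniformity and an explicit derived series; the paper's lemma buys a statement about arbitrary finite non-cyclic linear groups that it reuses nowhere else. The only point you should make explicit is that $M_K$ is a $K$-submodule (immediate from $k'\bigl((k-\id)v\bigr)=(k'kk'^{-1}-\id)(k'v)$), which is needed both for $\{1\}\ltimes M_K$ to be normal and for $D(K)\ltimes M_K$ to be a subgroup; this is a one-line check, not a gap.
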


\begin{proof}
By Lemma \ref{lemma: length of the binary octahedral group}, the derived length of $G$ is $4$. The short exact sequence $$1 \to \C^2 \to G \ltimes \C^2 \to G \to 1$$ implies that the derived length of $G \ltimes \C^2$ is at most $4+1 = 5$. Moreover, the strictly decreasing sequence $G=D^0(G) > D^1(G) > D^2(G) > D^3(G) > D^4(G) = 1$ shows that the group $D^2(G)$ is non-Abelian and in particular non-cyclic. By Lemma \ref{lem:derived-subgroups-of-Linverse} below, we thus have $D^i(G \ltimes \C^2) = D^i(G) \ltimes \C^2$ for every $i \leq 3$. But since $D^3(G)$ is non-trivial, the group $D^3(G \ltimes \C^2) = D^3(G) \ltimes \C^2$ strictly contains the subgroup $(\C^2, +)$ of translations and  cannot be Abelian, because the group $\C^2$ is its own centralizer in $\Acal_2$. Finally, we get $D^4(G \ltimes \C^2) \neq 1$, proving that the derived length of $G \ltimes \C^2$ is indeed $5$.
\end{proof}

\begin{lemma}\label{lem:derived-subgroups-of-Linverse}
Let $H$ be a finite non-cyclic subgroup of $\GL (2 , \C)$. Then the derived subgroup of $L^{-1}(H) = H \ltimes \C^2 \subseteq \Acal_2$ is the group $D(H) \ltimes \C^2$.
\end{lemma}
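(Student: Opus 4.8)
The plan is to work in the explicit semidirect-product description $L^{-1}(H) = H \ltimes \C^2$, writing an affine map $x \mapsto Ax + v$ as a pair $(A,v)$ with $A \in H$ and $v \in \C^2$, so that the composition law reads $(A,v)\circ(B,w) = (AB,\, Aw + v)$ and the translations $\{(I,v) : v \in \C^2\}$ form the kernel $\C^2$ of $L$, on which $H$ acts by its standard linear action. One inclusion is then immediate: since $L\colon L^{-1}(H) \to H$ is a surjective homomorphism, $L\big(D(L^{-1}(H))\big) = D(H)$, and because $\Ker L = \C^2$ this gives $D(L^{-1}(H)) \subseteq L^{-1}(D(H)) = D(H) \ltimes \C^2$.

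For the reverse inclusion, it suffices (given $L(D(L^{-1}(H))) = D(H)$) to prove that the whole translation subgroup $\C^2$ lies in $D(L^{-1}(H))$: one then lifts each $A \in D(H)$ to some element $(A,v) \in D(L^{-1}(H))$ and multiplies on the right by the translation $(I, A^{-1}(u-v)) \in \C^2$ to obtain an arbitrary $(A,u) \in D(L^{-1}(H))$. To reach the translations I would compute the commutator of a linear element with a translation,
\[ [(A,0),(I,w)] = (I, (A-I)w), \]
so that $D(L^{-1}(H))$ contains every translation whose vector lies in the image of $A - I$, and hence contains the subspace $W := \sum_{A \in H} \mathrm{Im}(A-I)$. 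The whole statement thus reduces to the equality $W = \C^2$.

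This last equality is the crux of the argument and the only point where the non-cyclicity of $H$ is used. First, $W$ is $H$-invariant, since for $C \in H$ one has $C(A-I)w = (CA-I)w - (C-I)w \in W$; consequently $H$ acts trivially on the quotient $\C^2/W$, because $(A-I)\C^2 \subseteq W$ for all $A$. If $W = 0$ then every $A = I$ and $H$ is trivial, hence cyclic. If $\dim W = 1$, then complete reducibility of the finite group $H$ (we are in characteristic $0$) splits $\C^2 = W \oplus W'$ into $H$-invariant lines, with $H$ acting trivially on $W' \cong \C^2/W$ and by some character $\chi$ on $W$; every $A \in H$ is therefore diagonal of the form $\mathrm{diag}(\chi(A),1)$, the map $A \mapsto \chi(A)$ is injective, and $H$ embeds into $\C^*$, so $H$ is cyclic. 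Both cases contradict the hypothesis, so $W = \C^2$, which closes the reverse inclusion and finishes the proof. I expect the verification of $W=\C^2$ to be the main obstacle, the earlier steps being routine semidirect-product bookkeeping.
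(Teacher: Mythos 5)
Your proof is correct and follows essentially the same route as the paper: both arguments reduce the statement to showing that the vectors $(A-I)w$, $A\in H$, $w\in\C^2$, span all of $\C^2$, and both derive a contradiction from the contrary hypothesis by using complete reducibility to put $H$ in the form $\mathrm{diag}(a,1)$ and conclude that $H$ embeds into $\C^*$, hence is cyclic. Your write-up merely makes explicit some steps the paper leaves to the reader (the inclusion $D(L^{-1}(H))\subseteq D(H)\ltimes\C^2$ and the final lifting of elements of $D(H)$), which is fine.
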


\begin{proof}
Set $K:= D(H \ltimes \C^2) \cap \C^2$. Note that $K$ contains the commutator $[ \id + v, h]$ for all $v \in \C^2$, $h \in H$, i.e.\ it contains all elements $h\cdot v-v$. It is enough to show that these vectors generate $\C^2$. Indeed, it would then imply that there exist $h_1,v_1,h_2,v_2$ such that the vectors $h_1\cdot v_1-v_1$ and $h_2\cdot v_2-v_2$ are linearly independent. But then, $K$ would also contain the vectors $h_1\cdot ( \lambda_1 v_1)-(\lambda_1v_1) + h_2\cdot ( \lambda _2 v_2) - \lambda_2v_2$ for any $\lambda_1, \lambda_2 \in \C$, proving that $K = \C^2$. Therefore, let us assume by contradiction that there exists a non-zero vector $w \in \C^2$ such that $h\cdot v-v$ is a multiple of $w$ for all $h \in H$, $v \in \C^2$. Take $w' \in \C^2$ such that $(w,w')$ is a basis of $\C^2$. In this basis, any element of $H$ admits a matrix of the form
\[ \left( \begin{array}{cc} a & b \\ 0 & 1 \end{array} \right). \]
Therefore, by the theory of representations of finite group, we may assume, up to conjugation, that each element of $H$ admits a matrix  of the form
\[ \left( \begin{array}{cc} a & 0 \\ 0 & 1 \end{array} \right).\]
This would imply  that $H$ is isomorphic to a finite subgroup of $\C^*$, hence that it is cyclic. A contradiction.
\end{proof}

%%%%%%%%%%%%%%%%%%%%%%%%%%%%%%%%%%%%%%%%%%%%%%%%%%%%%%%
\subsection{An ind-group with nonconjugate Borel subgroups.}

%%%%%%%%%%%%%%%%%%%%%%%%%%%%%%%%%%%%%%%%%%%%%%%%%%%%%%%
In this section, we consider the subgroup $\Aut_z(\A^3_{\C})$ of $\Aut(\A^3_{\C})$ of all automorphisms $f=(f_1,f_2,z)$  fixing the last coordinate of $\A^3_{\C}=\Spec(\C[x,y,z])$. Since it is clearly a closed subgroup, it is also an ind-group. 
Note that $\Aut_z(\A^3_{\C})$ is naturally isomorphic to a subgroup of $\Aut(\A^2_{\C(z)})$. In its turn, the field $\C (z)$ can be embedded into the field $\C$, so that the group $\Aut(\A^2_{\C(z)})$ is isomorphic to a subgroup of  $\Aut(\A^2_{\C })$. Therefore, by Proposition~\ref{prop: bound for the derived length of solvable subgroups of Aut(A2)}, we get
\[ \psi \big( \Aut_z(\A^3_{\C}) \big) \leq \psi \big( \Aut(\A^2_{\C(z)}) \big) \leq \psi \big( \Aut(\A^2_{\C }) \big) = 5.\] 
Recall moreover that $\Aut_z(\A^3_{\C})$ contains  nontriangularizable additive group actions \cite{Bass}. Let us briefly describe the  example given by Bass. Consider the following locally nilpotent derivation of $\C [x,y,z]$:
\[ \Delta = - 2 y \partial_x + z \partial_y. \]
Then, the derivation $(xz+y^2) \Delta$ is again locally nilpotent. We associate  it with the morphism
\[ (\C, +) \to \Aut _{\C} ( \C [x,y,z ] ), \quad t \mapsto \exp ( t (xz+y^2) \Delta). \]
The automorphism of $\A^3_{\C}$ corresponding to $ \exp ( t (xz+y^2) \Delta)$ is given by
\[ f_t:= (x-2t y (xz+y^2) -t^2 z(xz+y^2) ^2 , y + tz (xz+y^2) ,z) \in \Aut ( \A^3_{\C}).\]
For $t=1$, we get the famous Nagata automorphism. Note that the fixed point set of the corresponding $(\C,+)$-action on $\A^3_{\C}$ is the hypersurface $\{ xz+y^2 =0\}$ which has an isolated singularity at the origin. On the other hand, the fixed point set of a triangular $(\C,+)$-action on $\A^3_{\C}$
\[ t \mapsto g_t=  \exp ( t(a(y,z)\partial_x+b(z)\partial_y))    \in \Aut ( \A^3_{\C}) \]
is the set $\{ a(y,z) = b(z) = 0 \}$, which is isomorphic to a cylinder $\A^1_{\C}\times Z$ for some variety $Z$. This implies that the $(\C, +)$-action $t \mapsto f_t$  is not triangularizable.

By Proposition~\ref{proposition:Zorn}, it follows that $\Aut_z(\A^3_{\C})$ contains Borel subgroups that are not conjugate to a subgroup of the group \[\Bcal_z=\{(f_1,f_2,z)\in\Aut(\A^3_{\C})\mid f_1\in\C[x,y,z], f_2\in\C[y,z]\}\] 
of triangular automorphisms of $\Aut_z(\A^3_{\C})$.

\begin{proposition}\label{prop:Bz} The group $\Bcal_z$ is a Borel subgroup of $\Aut_z(\A^3_{\C})$. 
\end{proposition}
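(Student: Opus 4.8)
The plan is to establish the three defining features of a Borel subgroup for $\Bcal_z$: that it is connected, solvable, and maximal. In fact I will show it is maximal among \emph{all} solvable subgroups, which together with connectedness yields the Borel property exactly as in Corollary~\ref{cor:Borel}. Connectedness follows from the curve-connectedness argument of Lemma~\ref{lemma:G_n and B_n are connected}: an element $(a_1 x + p_1(y,z),\, a_2 y + b_2(z),\, z)$ of $\Bcal_z$ is joined inside $\Bcal_z$ to the diagonal automorphism $(a_1 x, a_2 y, z)$ by the morphism $t \mapsto (a_1 x + t\,p_1(y,z),\, a_2 y + t\, b_2(z),\, z)$, and the diagonal automorphisms form the connected torus $(\C^*)^2$. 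Solvability is clear from the inclusion of $\Bcal_z$ into the triangular subgroup of $\Aut(\A^2_{\C(z)})$, which is solvable by the computation of Lemma~\ref{lemma:B_n is solvable} (valid over any field of characteristic zero).

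For maximality, let $H$ be a solvable subgroup with $\Bcal_z \subsetneq H$; replacing $H$ by $\overline{H}$ via Lemma~\ref{lemma: the closure of a (solvable) subgroup is a (solvable) subgroup} I may assume $H$ is closed. The first step is to prove the exact analogue of Proposition~\ref{prop: the epsilon trick with the triangular group in any dimension} in this relative setting, namely that $H$ contains a non-triangular element of the subgroup $\Acal_z \cong \GL_2(\C[z])$ of automorphisms $(\ell_1,\ell_2,z)$ with $\ell_1,\ell_2$ homogeneous linear in $x,y$ and coefficients in $\C[z]$. Starting from $f = (f_1,f_2,z) \in H \setminus \Bcal_z$, one has $\partial f_2 / \partial x \neq 0$, so I can pick constants $c_1, c_2 \in \C$ with $\frac{\partial f_2}{\partial x}(c_1,c_2,z) \not\equiv 0$ in $\C[z]$. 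Composing $f$ on the right with $(x+c_1, y+c_2, z) \in \Bcal_z$ and on the left with a $z$-translation $(x - q_1(z), y - q_2(z), z) \in \Bcal_z$ chosen to kill the terms constant in $(x,y)$, I obtain $g \in H$ fixing the $z$-axis $\{x = y = 0\}$ pointwise and whose degree-one part in $(x,y)$ has nonzero coefficient of $x$ in its second component.

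Next I run the $\varepsilon$-trick with $h_\varepsilon = (\varepsilon x, \varepsilon y, z) \in \Bcal_z$: because $g$ has no pure-$z$ terms, the conjugates $h_\varepsilon^{-1} \circ g \circ h_\varepsilon$ extend through $\varepsilon = 0$, with limit the $(x,y)$-linear part $g^{\mathrm{lin}} \in \Acal_z \cap H$, whose coefficient matrix $M(z) \in \GL_2(\C[z])$ has determinant the nonzero constant $\Jac(g)$ and is not upper triangular. To contradict solvability, I then choose $z_0 \in \C$ where $M(z_0)$ is still non-triangular and consider the evaluation homomorphism $\Acal_z \cong \GL_2(\C[z]) \to \GL_2(\C)$, $M(z) \mapsto M(z_0)$. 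It sends $\Bcal_z \cap \Acal_z$ onto the full upper-triangular Borel subgroup of $\GL_2(\C)$ and sends $g^{\mathrm{lin}}$ to the non-triangular matrix $M(z_0)$; since that Borel is maximal in $\GL_2(\C)$ (Bruhat decomposition, as used in Proposition~\ref{prop: B is a maximal closed subgroup of Aut}), the image of $H \cap \Acal_z$ is all of $\GL_2(\C)$. As $\GL_2(\C)$ is not solvable and is a homomorphic image of $H \cap \Acal_z$, the group $H$ is not solvable, a contradiction. Hence $\Bcal_z$ is maximal solvable, and being connected and solvable it is a Borel subgroup.

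I expect the delicate point to be the $\varepsilon$-trick in this relative situation. Unlike in Proposition~\ref{prop: the epsilon trick with the triangular group in any dimension}, one cannot translate the parameter $z$, so the purely $z$-dependent part of $g$ must first be absorbed into triangular ($z$-translation) factors; only then do all surviving powers of $\varepsilon$ become nonnegative and the limit exist. One must also confirm that $g^{\mathrm{lin}}$ is a genuine automorphism, which comes down to the observation that the $(x,y)$-Jacobian of the $z$-preserving automorphism $g$ equals the constant $\Jac(g) \in \C^*$, so that $\det M(z)$ is a unit in $\C[z]$. The reduction to $\GL_2(\C)$ through the evaluation homomorphism is then routine.
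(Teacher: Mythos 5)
Your proof is correct and follows essentially the same route as the paper: connectedness and solvability as for $\Bcal_n$, the relative $\varepsilon$-trick to produce a non-triangular element of $\GL_2(\C[z])$ in $H$, and then evaluation at a point $z_0$ with $c(z_0)\neq 0$ to reduce to the Bruhat/maximality argument in $\GL_2(\C)$ (which the paper isolates as Lemma~\ref{lem:B2(C[z])}). Your explicit remark that the pure-$z$ terms must be absorbed by $z$-dependent translations before conjugating by $(\varepsilon x,\varepsilon y,z)$ is exactly the point the paper handles implicitly, and your verification that $\det M(z)=\Jac(g)\in\C^*$ is the right justification that the limit lies in $\GL_2(\C[z])$.
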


\begin{proof}With the same proof as for Lemma \ref{lemma:G_n and B_n are connected}, we obtain easily that $\Bcal_z$ is  connected. It is also solvable, since it can be seen as a subgroup of the Jonqui\`eres subgroup of $\Aut(\A^2_{\C(z)})$, which is solvable.

Now, we simply follow the proof of Proposition \ref{prop: the epsilon trick with the triangular group in any dimension}. Let $H\subset   \Aut_z(\A^3_{\C})$ be a closed subgroup containing strictly $\Bcal_z$ and take an element $f$ in $H\setminus\Bcal_z$ , i.e.\ an element $f=(f_1,f_2,z)$ with $f_2\in\C[x,y,z]\setminus\C[y,z]$. Arguing as before, we can find suitable translations $t_c=(x+c_1,y+c_2,z)$ and $t_{c'}=(x+c'_1,y+c'_2,z)$ such that the automorphism $g=t_c\circ f\circ t_{c'}$ fixes the point $(0,0,0)$ and  is of the form $g=(g_1,g_2,z)$ with $g_2=xc(z)+yd(z)+h(x,y,z)$ for some $c(z),d(z)\in\C[z]$, $c(z)\not\equiv0$, and some polynomial $h(x,y,z)$ belonging to the ideal $(x^2,xy,y^2)$ of $\C[x,y,z]$.     

Conjugating this  $g$ by the automorphism $(tx,ty,z)\in H$, $t\neq0$, and taking the limit when $t$ goes to 0, we obtain    an element of the form $(a(z)x+b(z)y, c(z)x+d(z)y,z)$ with 
$c(z)\not\equiv0$ in $H$. By Lemma \ref{lem:B2(C[z])} below, this implies that the group $H$ is not solvable.
\end{proof}

\begin{corollary} \label{corollary: Aut_z(A^3) admits non-conjugate Borel subgroups}
The ind-group $\Aut_z(\A^3_{\C})$ contains non-conjugate Borel subgroups.
\end{corollary}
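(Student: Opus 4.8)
The plan is to produce two Borel subgroups of $\Aut_z(\A^3_{\C})$ that fail to be conjugate. One of them will simply be the triangular group $\Bcal_z$, which is already known to be a Borel subgroup by Proposition~\ref{prop:Bz}. The other will be manufactured from Bass's non-triangularizable $(\C,+)$-action $t \mapsto f_t$ described just above, using the derived-length bound together with Zorn's lemma.

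First I would invoke the bound $\psi\big(\Aut_z(\A^3_{\C})\big) \leq 5$ obtained earlier in this section, which lets us apply Proposition~\ref{proposition:Zorn} to the ind-group $G = \Aut_z(\A^3_{\C})$: since the derived length of its solvable subgroups is uniformly bounded, every connected solvable subgroup is contained in a \emph{maximal} connected solvable subgroup, that is, in a Borel subgroup. I would then apply this to the subgroup $F := \{ f_t \mid t \in \C \}$, the image of Bass's action. This $F$ is abelian, hence solvable; it is connected, being the image of the connected group $\C$ under the ind-morphism $t \mapsto f_t$; and it lies in $\Aut_z(\A^3_{\C})$ because each $f_t$ fixes the coordinate $z$. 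Consequently $F$ is contained in some Borel subgroup $B'$ of $\Aut_z(\A^3_{\C})$.

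It then remains to check that $B'$ is not conjugate to $\Bcal_z$. The decisive input is the non-triangularizability of Bass's action, which was already established above through the comparison of fixed-point sets: the fixed locus $\{xz+y^2=0\}$ of $t \mapsto f_t$ has an isolated singularity at the origin, whereas the fixed locus of any triangular $(\C,+)$-action is cylindrical. Hence $F$ is not conjugate to a subgroup of $\Bcal_z$, even inside the larger group $\Aut(\A^3_{\C})$ (where being conjugate to a subgroup of $\Bcal_z\subseteq\Bcal_3$ would mean triangularizable), and \emph{a fortiori} not inside $\Aut_z(\A^3_{\C})$. If $B'$ were conjugate to $\Bcal_z$, say $B' = \psi \circ \Bcal_z \circ \psi^{-1}$ for some $\psi \in \Aut_z(\A^3_{\C})$, then $\psi^{-1} \circ F \circ \psi \subseteq \Bcal_z$ would contradict precisely this non-conjugacy. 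Therefore $B'$ and $\Bcal_z$ are two non-conjugate Borel subgroups of $\Aut_z(\A^3_{\C})$, which is the assertion.

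The genuinely hard part of the whole argument, namely the non-triangularizability of the Nagata-type action, has already been disposed of by the fixed-point computation preceding Proposition~\ref{prop:Bz}; what is left here is the formal assembly of the derived-length bound, Proposition~\ref{proposition:Zorn}, and Proposition~\ref{prop:Bz}. The one point deserving care is the passage between conjugacy in $\Aut_z(\A^3_{\C})$ and conjugacy in $\Aut(\A^3_{\C})$, but this goes in the favorable direction, since any conjugation effected inside the smaller group is in particular a conjugation inside the larger one.
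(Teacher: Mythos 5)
Your proposal is correct and follows essentially the same route as the paper: the bound $\psi\big(\Aut_z(\A^3_{\C})\big)\leq 5$ combined with Proposition~\ref{proposition:Zorn} places the image of Bass's non-triangularizable action inside some Borel subgroup, which cannot be conjugate to $\Bcal_z$ by the fixed-point-set comparison, while Proposition~\ref{prop:Bz} supplies $\Bcal_z$ as the second Borel subgroup. The paper leaves this assembly implicit in the paragraph preceding the corollary; your write-up merely makes it explicit, including the (correctly handled) remark about conjugacy passing from the subgroup $\Aut_z(\A^3_{\C})$ to $\Aut(\A^3_{\C})$.
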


In the course of the proof of Proposition \ref{prop:Bz}, we have used the following lemma that we prove now.

\begin{lemma}\label{lem:B2(C[z])}
The subgroup $\B_2 ( \C[z])$ of upper triangular matrices of $\GL_2(\C[z])$  is a maximal solvable  subgroup.
\end{lemma}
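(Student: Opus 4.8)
The plan is to specialise the variable $z$. For each $z_0\in\C$ the ring homomorphism $\C[z]\to\C$, $z\mapsto z_0$, induces an evaluation map $\ev_{z_0}\colon\GL_2(\C[z])\to\GL_2(\C)$, and I would combine this with the already-recalled Bruhat fact (see the proof of Proposition~\ref{prop: B is a maximal closed subgroup of Aut}) that the group $\B_2(\C)$ of invertible upper triangular $2\times2$ complex matrices (there denoted $B_2$) is a maximal subgroup of $\GL_2(\C)$. First I would check that $\ev_{z_0}$ is well defined with values in $\GL_2(\C)$: if $M\in\GL_2(\C[z])$ then $\det M\in\C[z]^*=\C^*$ is a nonzero constant, so $\det(\ev_{z_0}(M))=\det M\neq 0$. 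Being a group homomorphism, $\ev_{z_0}$ sends solvable subgroups to solvable subgroups.

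Suppose, for contradiction, that $H$ is a solvable subgroup of $\GL_2(\C[z])$ strictly containing $\B_2(\C[z])$. I would pick $M=\begin{pmatrix} a & b \\ c & d\end{pmatrix}\in H\setminus\B_2(\C[z])$; non-triangularity means the lower-left entry $c=c(z)$ is a nonzero polynomial, so I may choose $z_0\in\C$ that is not a root of $c$, giving $c(z_0)\neq 0$. The next step is to determine $\ev_{z_0}(H)$. On one hand $\ev_{z_0}$ maps $\B_2(\C[z])$ onto $\B_2(\C)$, since $\begin{pmatrix}\alpha & p(z)\\ 0 & \delta\end{pmatrix}$ with $\alpha,\delta\in\C^*$, $p\in\C[z]$ is sent to $\begin{pmatrix}\alpha & p(z_0)\\ 0 & \delta\end{pmatrix}$ and $p(z_0)$ already runs over all of $\C$. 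On the other hand $\ev_{z_0}(M)$ has lower-left entry $c(z_0)\neq 0$ and so is not upper triangular. Hence $\ev_{z_0}(H)$ is a subgroup of $\GL_2(\C)$ strictly containing $\B_2(\C)$.

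By maximality of $\B_2(\C)$ in $\GL_2(\C)$ this forces $\ev_{z_0}(H)=\GL_2(\C)$, which is not solvable, contradicting the solvability of the image $\ev_{z_0}(H)$. Thus no such $H$ exists. Together with the routine remark that $\B_2(\C[z])$ is itself solvable (its derived subgroup consists of unipotent upper triangular matrices, which form the abelian group $\bigl\{\begin{pmatrix}1 & r\\ 0 & 1\end{pmatrix}:r\in\C[z]\bigr\}$, so $\B_2(\C[z])$ is metabelian), this establishes that $\B_2(\C[z])$ is a maximal solvable subgroup.

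I do not expect a serious obstacle here, since the whole argument becomes almost immediate once one thinks of evaluating $z$. The only points that need a little care are choosing the specialisation $z_0$ outside the finite zero set of $c(z)$ — possible precisely because $\C$ is infinite — and checking that $\ev_{z_0}$ carries $\B_2(\C[z])$ \emph{onto} the full Borel $\B_2(\C)$, which is what guarantees that the image is strictly larger than $\B_2(\C)$ rather than merely larger than some proper subgroup of it. It is worth noting that this is exactly where the proof uses that the base ring is a polynomial ring over an infinite field, and that the same argument works verbatim over $\kk[z]$ for any infinite field $\kk$.
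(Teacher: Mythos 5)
Your proof is correct and follows essentially the same route as the paper: evaluate at a point $z_0$ where the lower-left entry of the non-triangular matrix does not vanish, observe that the image of $H$ then contains $\B_2(\C)$ together with a non-triangular matrix and hence equals $\GL_2(\C)$, and conclude that $H$ cannot be solvable. The extra verifications you include (surjectivity of $\ev_{z_0}$ onto $\B_2(\C)$ and the solvability of $\B_2(\C[z])$ itself) are routine and consistent with the paper's argument.
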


\begin{proof}
For every $\alpha \in \C$, denote by $\ev_{\alpha} \colon \GL_2(\C[z]) \to \GL_2(\C)$ the evaluation map that associates to an element $M(z) \in  \GL_2(\C[z])$ the constant matrix $M( \alpha )$ obtained by replacing $z$ by $\alpha$. Let $H$ be a subgroup of $\GL_2(\C[z])$ strictly containing the group $\B_2( \C [ z ] )$. By definition, $H$ contains a non-triangular matrix, i.e.\ a matrix of the form
\[ M =  \left( \begin{array}{cc} a(z) & b(z) \\ c(z) & d(z) \end{array} \right), \text{ with } c \not\equiv 0.  \]
Choose a complex number $\alpha$ such that $c ( \alpha ) \neq 0$. Then, the group $\ev_{\alpha} (H)$ contains the upper triangular constant matrices $\B_2 (\C)$ and a non-triangular matrix. Therefore,  $\ev_{\alpha} (H) = \GL_2( \C)$ and  $H$ is not solvable.
\end{proof}

\begin{remark}
By Nagao's theorem (see \cite{Nagao} or e.g. \cite[Chapter II, no 1.6]{Serre}), we have an amalgamated product structure
\[ \GL_2(\C[z]) = \GL_2 (\C) * _{\B_2( \C)} \B_2 (\C [ z ] ). \]
However, contrarily to the case of $\Aut ( \A^2)$, the group $\B_2 (\C [ z ] )$ is not a maximal closed subgroup. Indeed,  for every complex number $\alpha$, this group is strictly included into the group $\ev_{\alpha}^{-1}( \B_2 ( \C ) )$.
\end{remark}

%%%%%%%%%%%%%%%%%%%%%%%%%%%%%%%%%%%%%%%%%%%%%%%%%%%%%%%%%%%%%%%%%%%%%%%%%%%%%%

\subsection{Maximal closed subgroups} \label{subsection: Maximal closed subgroups}

%%%%%%%%%%%%%%%%%%%%%%%%%%%%%%%%%%%%%%%%%%%%%%%%%%%%%%%%%%%%%%%%%%%%%%%%%%%%%

In this section, we mainly focus on the following question.

\begin{question}\label{qu:maximal closed subgroups}
What are the   maximal  closed subgroups of  $\Aut (\A^n_{\C})$? 
\end{question}

First of all,  it is easy to observe that, since the action of $\Aut(\A^n_{\C})$ on $\A^n_{\C}$ is infinite transitive, i.e.~$m$-transitive for all integers $m\geq1$, the stabilizers of a finite number of points are examples of  maximal closed subgroups. 
%(see e.g.\ \cite{BS15}*{Lemma 3.12} for a simple proof of this fact).

\begin{proposition}
For every finite subset $\Delta$ of $\A^n_{\C}$, $n \geq 2$, the group
\[ \Stab (\Delta ) = \{ f \in \Aut ( \A^n_{\C}), \; f( \Delta) = \Delta \}\]
is a maximal subgroup of $\Aut ( \A^n_{\C})$. Furthermore, it is closed.
\end{proposition}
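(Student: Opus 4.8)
The plan is to treat the two claims separately, the closedness being routine and the maximality being the substance. Write $\Delta=\{P_1,\dots,P_m\}$ and $G=\Aut(\A^n_\C)$, and recall (as noted just before the statement) that for $n\ge 2$ the action of $G$ on $\A^n_\C$ is $m$-transitive for \emph{every} $m$; this infinite transitivity is the only external input I will use. For closedness, observe that for each point $P$ the evaluation map $\ev_P\colon G\to\A^n_\C$, $f\mapsto f(P)$, is an ind-morphism (on each piece $\Gcal_n(\C)_{\le d}$ the coordinates of $f(P)$ are polynomials in the coefficients of $f$), so $\{f : f(P)=Q\}$ is closed for all $P,Q$. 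Since $\Stab(\Delta)=\bigcup_{\sigma\in\Sym(\Delta)}\bigcap_{i}\{f : f(P_i)=\sigma(P_i)\}$ is a finite union of finite intersections of such sets, it is closed.

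For maximality, let $H$ be a subgroup with $\Stab(\Delta)\subsetneq H\subseteq G$, and pick $g\in H\setminus\Stab(\Delta)$, so that $g(\Delta)\ne\Delta$; I want to conclude $H=G$. First I reduce to a transitivity statement: writing $\mathcal S$ for the set of all $m$-element subsets of $\A^n_\C$, it suffices to prove that $H$ acts transitively on $\mathcal S$. Indeed, given $f\in G$ we have $f(\Delta)\in\mathcal S$, so transitivity yields $h\in H$ with $h(\Delta)=f(\Delta)$, whence $h^{-1}\circ f\in\Stab(\Delta)\subseteq H$ and thus $f\in H$. To analyze the $H$-orbit $\mathcal O$ of $\Delta$ in $\mathcal S$, I stratify $\mathcal S$ by the overlap statistic, setting $L_j=\{\Delta'\in\mathcal S : |\Delta'\cap\Delta|=j\}$ for $0\le j\le m$. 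The key observation is that $\Stab(\Delta)\subseteq H$ already acts transitively on each shell $L_j$: given $\Delta',\Delta''\in L_j$, infinite transitivity produces an automorphism realizing the partial data $\Delta'\cap\Delta\to\Delta''\cap\Delta$, $\Delta\setminus\Delta'\to\Delta\setminus\Delta''$ and $\Delta'\setminus\Delta\to\Delta''\setminus\Delta$, which fixes $\Delta$ setwise and sends $\Delta'$ to $\Delta''$ (here I use that $\A^n_\C\setminus\Delta$ is infinite, so there is room for the outside points). Consequently $\mathcal O$ is a union of shells $L_j$.

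It remains to show that $\mathcal O$ meets every shell, for then $\mathcal O=\mathcal S$ and we are done. Now $\mathcal O$ contains $L_m=\{\Delta\}$, and since $g(\Delta)\ne\Delta$ it also contains the shell $L_k$ with $k=|g(\Delta)\cap\Delta|<m$. To reach the remaining shells I apply $g$ to members of a shell already known to lie in $\mathcal O$: for $\Delta'\in L_j\subseteq\mathcal O$ one has $g(\Delta')\in\mathcal O$ with $|g(\Delta')\cap\Delta|=|\Delta'\cap g^{-1}(\Delta)|$, where $g^{-1}(\Delta)$ is a fixed $m$-set meeting $\Delta$ in exactly $k$ points. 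As $\Delta'$ ranges over all of $L_j$, this intersection number can be prescribed freely within a range that contains $j-1$ and $j+1$, so starting from $j=k$ and moving the overlap value up and down one step at a time one reaches every shell $L_0,\dots,L_m$; hence $\mathcal O=\mathcal S$ and $H=G$.

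The main (and essentially the only) obstacle is this last combinatorial step. One must check that as $\Delta'$ varies over $L_j$ the two contributions $|\Delta'\cap(g^{-1}(\Delta)\cap\Delta)|$ and $|\Delta'\cap(g^{-1}(\Delta)\setminus\Delta)|$ can be tuned independently, each over a range of consecutive integers, so that their sum attains every value between its extremes and in particular can be pushed strictly below and strictly above $j$. The infinitude of $\A^n_\C\setminus\Delta$ guarantees enough outside points to carry out every such choice, so the counting goes through for all $j$, and the filling-up of the shells then follows by the described step-by-step connection.
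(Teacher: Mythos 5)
Your strategy—reduce maximality to transitivity of $H$ on $m$-element subsets, note that $\Stab(\Delta)$ is transitive on each shell $L_j=\{\Delta' : |\Delta'\cap\Delta|=j\}$ by infinite transitivity, and hop between shells by applying $g$—is viable and organized differently from the paper's proof (which first manufactures $g\in\langle\Stab(\Delta),f\rangle$ with $g(\Delta)\cap\Delta=\emptyset$ by a descending induction on the overlap, using products $f\circ\alpha\circ f$ with $\alpha\in\Stab(\Delta)$, and then writes an arbitrary $\varphi$ as a product of $g^{\pm1}$ and stabilizer elements). Both proofs are driven by the overlap statistic, but yours packages the second half more conceptually. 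The closedness argument is fine. However, the justification of your final combinatorial step is false as stated. Put $k=|g^{-1}(\Delta)\cap\Delta|=|g(\Delta)\cap\Delta|$. Writing $|\Delta'\cap g^{-1}(\Delta)|$ as the sum of $a=|\Delta'\cap(g^{-1}(\Delta)\cap\Delta)|$ and $b=|\Delta'\cap(g^{-1}(\Delta)\smallsetminus\Delta)|$, the achievable values for $\Delta'\in L_j$ form exactly the integer interval from $\max(0,\,j-(m-k))$ to $\min(j,k)+\min(m-j,\,m-k)$. For $j>k$ the upper endpoint is $k+m-j$, which can lie far below $j-1$: with $m=5$, $k=0$, $j=4$ the range is $\{0,1\}$, containing neither $j-1=3$ nor $j+1=5$. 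So the claim that the range always contains $j-1$ and $j+1$, on which your ``move the overlap up and down one step at a time'' rests, is wrong, and the proof as written does not close.

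The gap is repairable without changing the architecture, because the faulty claim is only needed for $j\le k$, where it is true. From $L_k$ itself the reachable range is $[\max(0,\,2k-m),\,m]$, so the set of indices $j$ with $L_j\subseteq\mathcal{O}$ contains a contiguous block $[l,m]$ with $l\le k$ after one application of $g$; and for any $j\le k$ the range from $L_j$ is $[\max(0,\,j-(m-k)),\,j+(m-k)]$, so applying $g$ to the lowest shell reached so far extends the block downward by $m-k\ge1$ while keeping it contiguous. Iterating drives $l$ to $0$, giving $\mathcal{O}=\mathcal{S}$ and hence $H=G$. With that bookkeeping substituted for the ``$j\pm1$'' claim, your argument is complete.
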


\begin{proof}
Let $\Delta= \{ a_1,\ldots, a_k\}$ be a finite subset of $\A^n_{\C}$. Let $f \in \Aut(\A^n_{\C})\setminus\Stab(\Delta)$. We will prove that $\langle \Stab(\Delta), f \rangle = \Aut(\A^n_{\C})$, where $\langle \Stab(\Delta), f \rangle$ denotes the subgroup of $\Aut(\A^n_{\C})$ that is generated by $\Stab(\Delta)$ and $f$. We will use repetitively the well-known fact that $\Aut(\A^n_{\C})$ acts $2k$-transitively on $\A^n_{\C}$.

We first observe that $\langle \Stab(\Delta), f \rangle$ contains an element $g$ such that $g(\Delta)\cap\Delta=\emptyset$. To see this, denote by $m:=\left\vert{\Delta\cap f(\Delta)}\right\vert$  the cadinality of the set $\Delta\cap f(\Delta)$. Up to composing it by an element of  $\Stab(\Delta)$, we can suppose that $f$ fixes the points $a_1, \ldots, a_m$ and maps $a_{m+1}, \ldots, a_k$ outside $\Delta$. If $m\geq1$, then we consider an element $\alpha \in \Stab(\Delta)$ that maps the point $a_m$ onto $a_{m+1}$ and sends all points $f(a_{m+1}), \ldots, f(a_k)$ outside the set $f^{-1}(\Delta)$. Remark that  $g=f\circ\alpha\circ f$ is an element of $\langle \Stab(\Delta), f \rangle$ with $\left\vert{\Delta\cap g(\Delta)}\right\vert<m$. By descending induction on $m$, we can further suppose that $\left\vert{\Delta\cap g(\Delta)} \right \vert=0$ as desired.

Now, consider any $\varphi \in \Aut(\A^n_{\C})$. Let us prove that $\varphi$ belongs to the subgroup $\langle \Stab(\Delta), g \rangle$. Take  an element $\beta\in\Stab(\Delta)$ such that $\beta(\varphi(\Delta))\cap g^{-1}(\Delta)=\emptyset$. Then, $g(\beta(\varphi(\Delta))\cap \Delta=\emptyset$ and we can find an element $\gamma\in\Stab(\Delta)$ such that $(\gamma\circ g\circ\beta\circ\varphi)(a_i)=g(a_i)$ for all $i$. We have $\varphi= \beta^{-1}\circ g^{-1}\circ\gamma^{-1}\circ g\circ\delta \in \langle \Stab(\Delta), g \rangle$, where $\delta:=g^{-1}\circ(\gamma\circ g\circ\beta\circ\varphi)$ is an element of $\Stab(\Delta)$, proving that $\langle \Stab(\Delta), g \rangle$ is equal to the whole group $\Aut(\A^n_{\C})$. Therefore, the group $\Stab(\Delta)$ is actually maximal in $\Aut(\A^n_{\C})$. Finally, note that for each point $a \in \A^n_{\C}$ the evaluation map $\ev_a \colon \Aut(\A^n_{\C}) \to \A^n_{\C}$, $f \mapsto f(a)$ is and ind-morphism. Since $\Delta$ is a closed subset of $\A^n_{\C}$ the equality 
\[ \Stab (\Delta) = \bigcap_i (\ev_{a_i})^{-1} (\Delta) \]
implies that $\Stab (\Delta)$ is closed in $\Aut(\A^n_{\C})$.
\end{proof}

Besides the above examples and the triangular subgroup $\Bcal_2$, the only other maximal closed  subgroup  of  $\Aut (\A^2_{\C})$ that we are aware of is the affine subgroup $\Acal_2$. The fact that $\Acal_2$ is maximal among all closed subgroups of $\Aut (\A^2_{\C})$  is a particular case of the following recent result of Edo \cite{Edo}. (We recall that the so-called \emph{tame subgroup} of $\Aut (\A^2_{\C})$ is its subgroup generated by $\Acal_n$ and $\Bcal_n$.)

\begin{theorem}[\cite{Edo}] \label{theorem: Edo's result}
If a closed subgroup of $\Aut(\A^n_{\C})$, $n\geq2$, contains strictly the affine subgroup $\Acal_n$, then it also contains the whole tame subgroup, hence its closure. In particular, for $n=2$, the affine group $ \Acal_2$ is maximal among the closed subgroups of  $\Aut (\A^2_{\C})$.
\end{theorem}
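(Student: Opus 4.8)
The statement has two parts, and I would first reduce the second to the first. For $n=2$ the Jung--van der Kulk--Nagata theorem says that every automorphism of $\A^2_{\C}$ is tame, so the tame subgroup equals $\Aut(\A^2_{\C})$ itself; hence once the first assertion is known, a closed subgroup $H$ strictly containing $\Acal_2$ must contain $\Tame=\Aut(\A^2_{\C})$, giving $H=\Aut(\A^2_{\C})$ and the maximality. So the whole weight is on the first assertion: if $H$ is closed and $\Acal_n\subsetneq H$, then $H\supseteq\Tame=\langle\Acal_n,\Bcal_n\rangle$. I would think of this as the exact mirror of Proposition~\ref{prop: the epsilon trick with the triangular group in any dimension}: there one shows that a closed group strictly containing $\Bcal_n$ contains a \emph{non-triangular linear} automorphism; here I want to show that a closed group strictly containing $\Acal_n$ contains a \emph{non-affine triangular} (elementary) automorphism, after which $\Bcal_n$ can be built up.

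The plan for the first assertion is as follows. Pick $f\in H\setminus\Acal_n$. Since $\Acal_n\subseteq H$ supplies all translations and all of $\GL_n(\C)$, and since the linear part of an automorphism fixing the origin is invertible (its determinant is the nonzero constant $\Jac(f)$ evaluated at $0$), I may compose on both sides with affine maps to normalize $f$ to fix the origin with identity linear part, so that $f=\id+(\text{terms of degree}\ge 2)$. The goal is then to produce one nontrivial elementary automorphism $e(1,p)\in H$, using only the diagonal torus $T\subseteq\Acal_n$, the group $\GL_{n-1}(\C)$ acting on $x_2,\dots,x_n$, conjugation limits taken inside the \emph{closed} group $H$, and the bounded-degree filtration $V_{\leq d}$ on which closedness may be tested.

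Granting such an $e(1,p_0)\in H$, the generation step is comparatively soft. Elementary automorphisms in a fixed direction satisfy $e(1,p)\circ e(1,q)=e(1,p+q)$, so $\{p\in\C[x_2,\dots,x_n]\mid e(1,p)\in H\}$ is an additive subgroup; conjugating by the torus isolates its homogeneous components and shows it is a $\C$-subspace, while conjugating by $\GL_{n-1}(\C)$ makes it a $\GL_{n-1}$-submodule of $\C[x_2,\dots,x_n]$. Since each $\Sym^m$ is an irreducible $\GL_{n-1}(\C)$-module, a single nonzero homogeneous $p_0$ of degree $m$ forces all degree-$m$ elementaries into $H$; passing between degrees via commutators of elementaries in different directions (exactly the identities $[e(j,q_j),e(j+1,1)]=e(j,\Delta_{j+1}(q_j))$ used in Lemma~\ref{lemma:B_n is solvable}), together with the coordinate permutations in $\Acal_n$, yields all of $\Bcal_n$. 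Thus $\Tame=\langle\Acal_n,\Bcal_n\rangle\subseteq H$, and since $H$ is closed we also get $\overline{\Tame}\subseteq H$, as claimed.

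The hard part is unquestionably producing that first non-affine \emph{automorphism} inside $H$, and the essential difficulty is that $\Gcal_n(\C)$ is only locally closed in $\mathcal{E}_n(\C)$ (it is closed in the open subset $\mathcal{J}_n(\C)$ of $\mathcal{C}_n(\C)$), so limits computed among endomorphisms tend to leave the automorphism locus. Concretely, the naive degree-isolating limit $h_{\varepsilon^d}^{-1}\circ f\circ h_{\varepsilon}\to f^{(d)}$ (the top homogeneous part) stays in bounded degree but converges to a homogeneous map of degree $d\ge2$, which has vanishing Jacobian at the origin and so is \emph{not} an automorphism; dually, the isotropic $\varepsilon$-trick of Proposition~\ref{prop: the epsilon trick with the triangular group in any dimension} only ever recovers the linear part, which lands back in $\Acal_n$; and iterating $f$ to expose its lowest-order part $P$ makes $\deg(f^N)$ blow up, so the approximants escape every $V_{\leq d}$ where closedness is available (and the limit $\id+P$ need not even be invertible). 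The genuine content of the theorem, carried out by Edo, is therefore to choose compositions and one-parameter limits that simultaneously preserve invertibility---landing on honest elementary automorphisms---and keep degrees bounded; I expect this to require a careful analysis of the $\Acal_n$--$\Acal_n$ double coset of $f$ and of how the top-degree part interacts with the tame (amalgamated) structure, rather than any single clever limit.
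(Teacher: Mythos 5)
This theorem is not proved in the paper at all: it is quoted from Edo's article \cite{Edo}, so there is no internal argument to compare yours against. Judged on its own terms, your reduction of the second assertion to the first via the Jung--van der Kulk--Nagata theorem is correct, and your identification of the analogy with Proposition~\ref{prop: the epsilon trick with the triangular group in any dimension} is the right way to think about the problem. But the proposal does not constitute a proof, for two reasons.

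First, as you yourself concede, the crucial step --- producing from an arbitrary $f\in H\setminus\Acal_n$ a single non-affine \emph{automorphism} of controlled form inside the closed group $H$ --- is left entirely open, and your discussion of why the naive limits fail (loss of invertibility for the top homogeneous part, collapse to the linear part for the isotropic limit, unbounded degree for iterates) is accurate: this is exactly where the content of Edo's theorem lies. Second, and more seriously because you present it as ``comparatively soft'', the generation step is also wrong as stated. Even granting an elementary $e(1,p_0)\in H$ with $\deg p_0=m\geq 2$, the commutator identities of Lemma~\ref{lemma:B_n is solvable} only \emph{lower} the degree of the polynomial part, and the abstract subgroup generated by $\Acal_2$ and all elementaries of degree $\leq m$ is precisely the proper subgroup $M_m=\langle\Acal_2,(\Bcal_2)_{\leq m}\rangle$ discussed at the end of Section~\ref{section:Borel-subgps}: by the amalgamated (multidegree) structure, no product of such elements ever has a syllable of degree $>m$, so $\Bcal_2\not\subseteq\langle\Acal_2,e(1,p_0)\rangle$. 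To climb in degree one must again invoke closedness of $H$ and construct further limits producing elementaries of arbitrarily large degree; this is a second essential use of the topology that your outline omits. So the proposal is an honest and well-oriented sketch, but both halves of the first assertion remain unestablished.
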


\begin{remark}Note that Theorem \ref{theorem: Edo's result} does not allow us to settle the question of the (non) maximality of $\Acal_n$ among the closed subgroups of $\Aut(\A^n_{\C})$ when $n\geq3$. Indeed, on the one hand, it was recently shown that, in dimension 3, the tame subgroup is not closed (see \cite{Edo-Poloni}). But, on the other hand, it is still unknown whether it is dense in $\Aut(\A^3_{\C})$ or not. For $n\geq4$, the three questions, whether the tame subgroup is closed, whether it is dense, or even whether it is a strict subgroup of $\Aut(\A^n_{\C})$, are all open.  
\end{remark}

Let us finally remark that    the affine group $\Acal_2$ is not a maximal among all abstract subgroups of $\Aut(\A^2_{\C})$. Indeed, using the amalgamated structure
\[ \Aut(\A_{\C}^2) = {\Acal_2} *_{\Acal_2 \cap \Bcal_2} \Bcal_2 \]
and following \cite{Friedland-Milnor}, we can define the multidegree (or polydegree) of any automorphism $f \in \Aut(\A^2_{\C})$ in the following way. If $f$ admits an expression
\[ f = a_1 \circ b_1 \circ \cdots \circ a_k \circ b_k \circ a_{k+1},\]
where each $a_i$ belongs to $\Acal_2$, each $b_i$ belongs to $\Bcal_2$ and $a_i \notin \Bcal_2$ for $2 \leq i \leq k$, $b_i \notin \Acal_2$ for $1 \leq i \leq k$, the multidegree of $f$ is defined as the finite sequence (possibly empty) of integers at least equal to $2$:
\[ \mdeg (f) = ( \deg b_1, \deg b_2, \ldots, \deg b_k).\]
Then, the subgroup $M_r:= \langle \Acal_2, (\Bcal_2)_{\leq r} \rangle \subseteq \Aut(\A^2_{\C})$ coincides with the set of automorphisms whose multidegree is of the form $(d_1,\ldots, d_k)$ for some $k$ with $d_1,\ldots,d_k \leq r$. We thus have a strictly increasing sequence of subgroups
\[  \Acal_2= M_1 < M_2 < \cdots < M_d < \cdots,\]
showing in particular that $\Acal_2$ is not a maximal abstract subgroup.

%%%%%%%%%%%%%%%%%%%%%%%%%%%%%%%%%%%%%%%%%%%%%%%%%%%%%%%%%%%
\section{Non-maximality of the Jonqui\`eres subgroup in dimension 2} \label{section:dim2}
%%%%%%%%%%%%%%%%%%%%%%%%%%%%%%%%%%%%%%%%%%%%%%%%%%%%%%%%%%%

Throughout this section, we work over an arbitrary ground field $\kk$. 

Recall that by the famous Jung-van der Kulk-Nagata theorem \cites{Jung, Kulk, Nagata}, the group $\Aut(\A^2_{\kk})$, of algebraic automorphisms of the affine plane, is the amalgamated free product of its affine subgroup
\[A = \{ (ax+by+c, a'x+b'y +c')\in\Aut(\A^2_{\kk}) \mid a,b,c,a',b',c' \in \kk\}\]
and its Jonquières subgroup 
\[B:= \{ (ax+p(y), b'y+c')\in\Aut(\A^2_{\kk}) \mid a,b',c' \in \kk, p(y) \in \kk[y] \}\]
above their intersection. Therefore, every element $f\in\Aut(\A^2_{\kk})$ admits a \emph{reduced expression}  as a product of the form
\begin{equation}\tag{$\ast$}\label{equ:mot}f=t_1\circ a_1\circ t_2\circ\cdots\circ a_n\circ t_{n+1},\end{equation}
where $a_1,\ldots,a_n$ belong to $A\setminus A\cap B$, and $t_1,\ldots,t_{n+1}$ belong to $B$ with $t_2,\ldots,t_n\notin A\cap B$.

\begin{definition} \label{definition: affine length}
The number $n$ of  affine  non-triangular automorphisms appearing in such an expression  for $f$ is unique. We call it the \emph{affine length} of $f$ and denote it by $\ell_A (f)$.
\end{definition}

\begin{remark}Instead of counting affine elements to define the length of an automorphism of $\A^2$, one can of course  also consider the Jonquières elements and define the triangular length $\ell_B(f)$ of every $f\in\Aut(\A^2_{\kk})$. Actually, this is the triangular length, that one usually uses in the literature. Let us in particular recall that  this length map $\ell_B:\Aut(\A^2_{\C}) \to \N$ is lower semicontinuous \cite{Fur02}, when considering $\Aut(\A^2_{\C})$ as an ind-group. Since  
\[ \ell_A (f) = \max_{b_1,b_2 \in B} \ell_B (b_1\circ f\circ b_2) -1\]
for every  $f \in \Aut(\A^2_{\kk})$
and since  the supremum of arbitrarily many lower semicontinuous maps is lower semicontinuous, we infer that $\ell_A$ has also this property.
\end{remark}

\begin{proposition} \label{prop: lower semicontinuity of l_A}
The affine length map $\ell _A \colon  \Aut(\A^2_{\C}) \to \N$ is lower semicontinuous.
\end{proposition}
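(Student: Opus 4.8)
The plan is to prove lower semicontinuity of $\ell_A$ directly from its expression in terms of the triangular length $\ell_B$, whose lower semicontinuity is already known from \cite{Fur02}. Recall that a map $\ell \colon \Aut(\A^2_{\C}) \to \N$ is lower semicontinuous if and only if, for every integer $m$, the set $\{ f \mid \ell(f) \geq m \}$ is closed (equivalently, $\{ f \mid \ell(f) \leq m \}$ is open) for the ind-topology. The formula
\[ \ell_A (f) = \max_{b_1,b_2 \in B} \ell_B (b_1\circ f\circ b_2) -1 \]
recalled in the preceding remark is the heart of the matter, so the first thing I would do is verify this identity, since everything rests on it. It follows from the normal form \eqref{equ:mot}: given a reduced expression $f=t_1\circ a_1\circ \cdots\circ a_n\circ t_{n+1}$ with $\ell_A(f)=n$, composing by suitable $b_1,b_2\in B$ on the two sides either merges the outer affine-triangular blocks or, when $t_1$ or $t_{n+1}$ lies outside $A\cap B$, creates an extra triangular syllable; one checks that the largest achievable $\ell_B$ is exactly $n+1$, attained precisely when both endpoints are genuinely triangular.

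Granting the formula, the main step is to show that the supremum of a family of lower semicontinuous functions is again lower semicontinuous, applied to the family $f\mapsto \ell_B(b_1\circ f\circ b_2)$ indexed by $(b_1,b_2)\in B\times B$. For each fixed pair $(b_1,b_2)$, the map $f\mapsto b_1\circ f\circ b_2$ is an ind-automorphism of $\Aut(\A^2_{\C})$ (it is the composition of left and right translations, which are ind-isomorphisms), hence continuous, so $f\mapsto \ell_B(b_1\circ f\circ b_2)$ is lower semicontinuous as a composition of a continuous map with the lower semicontinuous map $\ell_B$. For a supremum, I would argue at the level of sublevel/superlevel sets: for any family $(g_i)_{i\in I}$ of lower semicontinuous functions into $\N\cup\{+\infty\}$ and any integer $m$, one has
\[ \Big\{ f \,\Big|\, \sup_i g_i(f) \geq m \Big\} = \bigcap_{m' < m} \overline{\{ f \mid \sup_i g_i(f) > m' \}}, \]
and more simply $\{ f \mid \sup_i g_i(f) \leq m \} = \bigcap_i \{ f \mid g_i(f) \leq m \}$ is an intersection of open sets. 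The subtlety is that an arbitrary intersection of open sets need not be open, so I cannot conclude directly from this last identity that $\ell_A$ is lower semicontinuous; this is exactly where I expect the main obstacle to sit.

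To overcome it, I would exploit the fact that $\ell_B$ takes integer values and that, on each filtration piece $\Aut(\A^2_{\C})_{\leq d}$, the degree is bounded, so only finitely many values of $\ell_B$ occur there. More precisely, I would fix $d$ and restrict attention to $\Aut(\A^2_{\C})_{\leq d}$, on which $\ell_A$ is bounded by a constant $N(d)$ depending only on $d$ (since affine length is controlled by degree). The key reduction is that the supremum over all $(b_1,b_2)\in B\times B$ can be replaced, on this bounded piece, by a supremum over a set of pairs whose relevant degrees are themselves bounded: composing with triangular maps of very large degree only increases degree without lowering $\ell_B$ below what an optimal bounded-degree choice already achieves. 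This turns the troublesome infinite intersection into a condition testable by finitely many (or a degree-bounded, hence well-behaved) family of closed conditions on each $\Aut(\A^2_{\C})_{\leq d}$, so that the superlevel set $\{ \ell_A \geq m\} \cap \Aut(\A^2_{\C})_{\leq d}$ is Zariski-closed for every $d$. By the definition of the ind-topology, this shows $\{ \ell_A \geq m\}$ is closed in $\Aut(\A^2_{\C})$ for every $m$, which is precisely lower semicontinuity of $\ell_A$. The delicate point to get right is the uniform degree bound making the reduction to finitely many test pairs valid; I would carry this out by tracking how $\ell_B(b_1\circ f\circ b_2)$ depends on $\deg b_1, \deg b_2$ via the normal form, showing it stabilizes once these degrees exceed $\deg f$.
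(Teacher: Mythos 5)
Your overall strategy --- the identity $\ell_A(f)=\max_{b_1,b_2\in B}\ell_B(b_1\circ f\circ b_2)-1$, the continuity of the translations $f\mapsto b_1\circ f\circ b_2$, and the lower semicontinuity of $\ell_B$ from \cite{Fur02} --- is exactly the paper's, and your sketch of why the identity holds is reasonable. But there is a genuine error at the very first step: you characterize lower semicontinuity by ``$\{f\mid\ell(f)\ge m\}$ is closed (equivalently, $\{f\mid\ell(f)\le m\}$ is open)''. This is backwards. A map $\ell$ with values in $\N$ is lower semicontinuous precisely when the strict superlevel sets $\{\ell>a\}$ are open, i.e.\ when the sublevel sets $\{\ell\le m\}$ are \emph{closed}. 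That is what \cite{Fur02} proves for $\ell_B$: the length can only drop under specialization, so $\{\ell_B\le m\}$ is closed; by contrast $\{\ell_B\ge m\}$ is in general \emph{not} closed, as one sees by degenerating a genuinely non-triangular composition to a triangular one inside an algebraic family.

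With the correct convention, the ``main obstacle'' you identify evaporates: for any family $(g_i)$ of lower semicontinuous maps, $\{\sup_i g_i>a\}=\bigcup_i\{g_i>a\}$ is a union of open sets, hence open (equivalently, $\{\sup_i g_i\le m\}=\bigcap_i\{g_i\le m\}$ is an intersection of \emph{closed} sets, hence closed). This one-line observation is all the paper uses; no uniform degree bound and no reduction to finitely many test pairs is needed. Conversely, the workaround sketched in your last paragraph is aimed at showing that $\{\ell_A\ge m\}\cap\Aut(\A^2_{\C})_{\le d}$ is Zariski-closed, which is a false statement (again because affine length drops in the limit of a degenerating family), so that part of the argument cannot be completed --- it must be discarded rather than repaired. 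Once the definition is corrected, the first two paragraphs of your proposal already constitute a complete proof, essentially identical to the paper's.
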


The next result shows that the Jonquières subgroup is not a maximal subgroup of $\Aut(\A^2_{\kk})$.

\begin{proposition}\label{prop:non-maximality}Let $p \in \kk [y]$ be a polynomial that fulfils the  following property: 
\begin{equation}\tag{WG}\label{eq-def:wgp} \forall \,  \alpha, \beta, \gamma \in \kk, \; \deg [ p(y) - \alpha p ( \beta y + \gamma )] \leq 1 \Longrightarrow \alpha = \beta =1\text{ and } \gamma = 0,
\end{equation}
and consider the following elements of $\Aut(\A^2_{\kk})$:
\[ \sigma = (y,x), \quad t= (-x +p(y), y), \quad f = (\sigma\circ t) ^2\circ \sigma\circ (t\circ \sigma)^2.\]
Then, the subgroup generated by $B$ and $f$ is a strict subgroup of $\Aut(\A^2_{\kk})$, i.e.\ $\langle B, f\rangle\neq\Aut(\A^2_{\kk})$.
\end{proposition}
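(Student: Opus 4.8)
The plan is to prove the stronger statement that $\sigma\notin\langle B,f\rangle$; this is enough, since $\sigma\in\Aut(\A^2_\kk)$, and it is in fact the sharpest possible choice because Gaussian elimination gives $A=\langle A\cap B,\sigma\rangle$, whence $\langle B,\sigma\rangle=\langle A,B\rangle=\Aut(\A^2_\kk)$. First I record the structure of $f$. Putting $w=(\sigma\circ t)^2$ one has $f=w\circ\sigma\circ w^{-1}$, so $f$ is an involution conjugate to $\sigma$; moreover $\sigma^2=t^2=\id$, and property \eqref{eq-def:wgp} forces $\deg p\geq2$ (for $\deg p\leq1$ the bracket always has degree $\leq1$, contradicting \eqref{eq-def:wgp}), so that $t\in B\smallsetminus(A\cap B)$. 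Consequently
\[ f=\sigma\circ t\circ\sigma\circ t\circ\sigma\circ t\circ\sigma\circ t\circ\sigma \]
is a reduced word in the amalgamated product $\Aut(\A^2_\kk)=A *_{A\cap B}B$ whose five affine syllables are the five copies of $\sigma$; in particular $\ell_A(f)=5$.

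Write $C:=A\cap B$. The core of the argument is the claim that \emph{every element of $\langle B,f\rangle\smallsetminus B$ has affine length at least $5$}. Granting it, the proposition follows at once, since $\ell_A(\sigma)=1<5$ while $\sigma\notin B$. To prove the claim I would take $g\in\langle B,f\rangle$, write $g=b_0\circ f\circ b_1\circ\cdots\circ f\circ b_m$ with $b_i\in B$, and use $f^2=\id$ to reduce to the case where each internal factor $b_1,\dots,b_{m-1}$ is nontrivial (if an internal $b_i=\id$ then $f\circ b_i\circ f=\id$ and $m$ drops). Expanding every $f$ and reducing in $A *_C B$, the only possible cancellations occur at the $m-1$ internal junctions, each of the shape $\sigma\circ b_i\circ\sigma$ where the two displayed copies of $\sigma$ are the last syllable of one $f$ and the first syllable of the next; the end factors $b_0,b_m$ never shorten the word. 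So it remains to bound, junction by junction, how many affine syllables can be annihilated.

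The decisive computation is the following. A junction $\sigma\circ b\circ\sigma$ can cancel only if $b\in C$, in which case $\sigma\circ b\circ\sigma\in A$; this affine syllable is absorbed into the two neighbouring copies of $t$ precisely when $\sigma\circ b\circ\sigma\in C$, i.e.\ when $b$ is diagonal. For a diagonal $a=(\delta x+e,\;\alpha y+c)\in C$ one computes
\[ t\circ a\circ t=\bigl(\delta x+[\,p(\alpha y+c)-\delta\, p(y)-e\,],\;\alpha y+c\bigr), \]
which again lies in $C$ if and only if $\deg[\,p(\alpha y+c)-\delta\, p(y)\,]\leq1$. Dividing by $\delta$ and invoking \eqref{eq-def:wgp} forces $\alpha=\delta=1$ and $c=0$, i.e.\ $b=(x,y+e)$ is a pure $y$-translation; a second application of \eqref{eq-def:wgp}, now to $\deg[\,p(y-e)-p(y)\,]\leq1$, then forces $e=0$, i.e.\ $b=\id$. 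Tracking the affine syllables, this means the cancellation cascade at each internal junction halts after consuming at most two copies of $\sigma$ from each of the two adjacent factors $f$ (a full annihilation would require $b_i=\id$, which is excluded). Hence each junction lowers the affine length by at most $4$. Since the two cascades flanking a given $f$ consume at most $2+2<5$ of its five affine syllables, at least one survives, so cascades at distinct junctions do not interact and the losses add: $\ell_A(g)\geq 5m-4(m-1)=m+4\geq5$ for every $g$ with $m\geq1$, proving the claim.

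The main obstacle is exactly this junction analysis, for a priori a cunningly chosen triangular $b_i$ might let two consecutive copies of $f$ telescope far enough to drop the affine length below $5$ — and $f\circ\id\circ f=\id$ shows this genuinely happens for $b_i=\id$. Property \eqref{eq-def:wgp} is precisely the hypothesis ruling this out: the triangular syllables produced along a cascade are governed by the expressions $p(y)-\alpha\, p(\beta y+\gamma)$, and \eqref{eq-def:wgp} guarantees that such a syllable re-enters $C$ only in the trivial way, so the cascade stops at depth two. The remaining bookkeeping — independence of the cancellations at different junctions, and the fact that $b_0,b_m$ cause no shortening — is routine once this local bound is established.
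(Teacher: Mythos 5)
Your proposal is correct and follows essentially the same route as the paper: writing $g=b_0\circ f\circ b_1\circ\cdots\circ f\circ b_m$ with nontrivial internal $b_i$, analysing the cancellation at each junction $\sigma\circ b_i\circ\sigma$ via the computation of $t\circ a\circ t$ for $a\in A\cap B$, and invoking \eqref{eq-def:wgp} to stop the cascade at depth two on each side. Your loss-$0$/$1$/$2$/$4$ cases are exactly the paper's four cases $b_i\in B_0\smallsetminus B_1$, $B_1\smallsetminus B_2$, $B_2\smallsetminus B_3$, $B_3\smallsetminus\{\id\}$, yielding the same bound $\ell_A(g)\geq m+4$ and hence the same conclusion that no element of affine length $1$ lies in $\langle B,f\rangle$.
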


\begin{remark}Polynomials  satisfying the above property \eqref{eq-def:wgp} are called \emph{weakly general} in \cite{Furter-Lamy}, where a stronger notion of a general polynomial is also given (see \cite{Furter-Lamy}*{Definition 15, page 585}). In particular, by \cite{Furter-Lamy}*{Example 65, page 608}, the polynomial $q = y^5 +y^4$ is weakly general if $\kk$ is a field of characteristic zero.

 Moreover,  the polynomial  $q = y^{2p} - y^{2p-1}$ is weakly general if $\textrm{char}(\kk)=p>0$. This follows directly from the fact that   the coefficients of $y^{2p}$, $y^{2p-1}$ and  $y^{2p-2}$ in the polynomial $q(y) - \alpha q ( \beta y + \gamma )$ are equal to
$1-\alpha\beta^{2p}$, $1-\alpha\beta^{2p-1}$ and  $-\alpha\beta^{2p-2}\gamma$, respectively.
\end{remark}

\begin{proof}[Proof of Proposition \ref{prop:non-maximality}]
Remark that $\sigma$ and $t$, hence $f$, are  involutions. Therefore, every element $g\in \langle B, f\rangle$ can be written as 
\[g = b_1\circ f\circ b_2\circ f\circ \cdots\circ b_k\circ f\circ b_{k+1},\]
where the elements $b_i$ belong to $B$ and where we can assume without restriction that  $b_2, \ldots, b_k$  are different from the identity (otherwise, the expression for $g$ could  be shortened using that   $f^2 = \id$).

In order to prove the proposition, it is enough to show that no element $g$ as above is of affine-length  equal to 1. Note that $\ell_A(g)=0$ if $k=0$ and that $\ell_A(g)=\ell_A(f)=5$ if $k=1$. It remains to consider the case where $k\geq2$.

 For this, let us  define four subgroups $B_0,\ldots,B_3$ of $B$ by
\[ \begin{array}{l}
B_0 = B,\\
B_1 = A \cap B = \{ (a x + b y + c, b' y +c')\mid a,b,c,b',c' \in \kk, a,b' \neq 0 \}, \\
B_2 = (A \cap B) \cap [ \sigma\circ (A \cap B)\circ \sigma ] =  \{ (a x +c, b' y + c')\mid a,c,b',c' \in \kk, a,b' \neq 0 \}, \\
B_3 = \{ (x, y +c')\mid c' \in \kk \}.
\end{array} \]
Note that $B=B_0 \supseteq B_1 \supseteq B_2 \supseteq B_3$. We will now give a reduced expression of $u_i := (t\circ \sigma)^2\circ b_i\circ (\sigma\circ t) ^2$ for each $i \in \{ 2, \ldots, k \}$. We do it by considering successively the four following cases: 
\[1. \ b_i \in B_0 \smallsetminus B_1; \quad 2.  \ b_i \in B_1 \smallsetminus B_2; \quad 3. \ b_i \in B_2 \smallsetminus B_3; \quad  4. \ b_i \in B_3 \smallsetminus \{ \id \}.\]

\underline{Case 1:}  $b_i \in B_0 \smallsetminus B_1$.

Since $b_i \in B \smallsetminus A$, the element $u_i$ admits the following reduced expression
\[ u_i = (t\circ \sigma) ^2\circ b_i\circ ( \sigma\circ t )^2.\]

\underline{Case 2:} $b_i \in B_1 \smallsetminus B_2$.

Since $\widehat{b_i}:= \sigma\circ b_i\circ \sigma \in A \smallsetminus B$, the element $u_i$ has the following reduced expression
\[ u_i= t\circ \sigma\circ t\circ  \widehat{b_i}\circ t\circ \sigma\circ t .\]

\underline{Case 3:} $b_i \in B_2 \smallsetminus B_3$.

Let us check that $\overline{b_i}:= t\circ \sigma\circ b_i\circ \sigma\circ t \in B \smallsetminus A$. We are in the case where $b_i = (a x +c, b' y + c')$ with $(a,c,b') \neq (1,0,1)$. A direct calculation gives that
\[ \overline{b_i}= ( b' x + p ( a y + c) - b' p(y) - c', a y + c).\]
By the assumption made on $p$, we have that $\deg [ p ( a y + c) - b' p(y) ] \geq 2$, hence that $\overline{b_i} \in B \smallsetminus A$. Therefore $u_i$ admits the following reduced expression
\[ u_i =t\circ \sigma\circ \overline{b_i}\circ \sigma\circ t .\]

\underline{Case 4.} $b_i \in  B_3 \smallsetminus \{ \id \}$.

Let us check that $\widetilde{b_i}:= (t\circ \sigma)^2\circ b_i\circ (\sigma\circ t) ^2 \in B \smallsetminus A$. We are in the case where $b_i = (x , y + c')$ with $ c' \in \C^*$. Using the computation in case 3 with $(a,c,b') = (1,0,1)$,  we then obtain that
\[ \widetilde{b_i}= t\circ \sigma \circ (x-c', y) \circ \sigma\circ t = t \circ  (x,y-c') \circ t = (x + p(y-c') - p(y), y-c') \in B \smallsetminus A. \]
Therefore,  the element $u_i$ has the following reduced expression
\[ u_i = \widetilde{b_i}.\]

Finally we obtain a reduced expression for an element $g\in \langle B, f\rangle$  from the above study of cases, since we can express 
\begin{align*}
g &= b_1\circ f\circ b_2\circ f\circ \cdots\circ b_k\circ f\circ b_{k+1}\\
&=b_1\circ(\sigma\circ t)^2\circ\sigma\circ u_2\circ\sigma\circ\cdots\circ\sigma\circ u_{k}\circ\sigma\circ(t\circ\sigma)^2\circ b_{k+1}.
\end{align*}
In particular, observe that $\ell_A(g)\geq6$ if $k\geq2$. This concludes the proof.
\end{proof}

Note that the element $f$ such that $\langle B, f\rangle\neq\Aut(\A^2_{\kk})$, that we  constructed in Proposition \ref{prop:non-maximality}, is of affine-length $\ell_A(f)=5$. Our next result shows that 5 is precisely the minimal length for elements $f\in\Aut(\A^2_{\kk})\setminus B$ with that property. 

\begin{proposition}\label{prop:ell<4}
Suppose that $f\in\Aut(\A^2_{\kk})$ is an automorphism of affine length $\ell$ with $1\leq\ell\leq4$. Then, the subgroup generated by $B$ and $f$ is equal to the whole group $\Aut(\A^2_{\kk})$, i.e.\ $\langle B, f\rangle=\Aut(\A^2_{\kk})$.
\end{proposition}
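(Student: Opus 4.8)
The plan is to show that if $f$ has affine length $\ell$ with $1 \leq \ell \leq 4$, then the group $\langle B, f \rangle$ contains a non-triangular affine automorphism, which by the Jung--van der Kulk--Nagata amalgamated structure forces $\langle B, f \rangle = \Aut(\A^2_{\kk})$. Indeed, since $B \supseteq A \cap B$ and the whole group is generated by $A$ and $B$, it suffices to produce inside $\langle B, f \rangle$ a single element of $A \setminus B$: together with all of $B$ this recovers $A$ (because $A \cap B$ together with one extra affine map generates $A$, using that $\B_2 = A \cap \GL_2$ is a maximal subgroup of $\GL_2$ via the Bruhat decomposition, exactly as in the proof of Proposition~\ref{prop: B is a maximal closed subgroup of Aut}), and hence generates everything. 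So the real target is: \emph{exhibit an element of affine length $1$ that lies in the subgroup $A$, i.e.\ is itself affine but non-triangular.}

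First I would reduce $f$ to a normal form using that I may multiply on both sides by elements of $B$ without changing the generated group: replacing $f$ by $b \circ f \circ b'$ for $b, b' \in B$ leaves $\langle B, f\rangle$ unchanged. Writing a reduced expression $f = t_1 \circ a_1 \circ t_2 \circ \cdots \circ a_\ell \circ t_{\ell+1}$ with $a_i \in A \setminus (A\cap B)$ and interior $t_i \notin A \cap B$, I can absorb $t_1$ and $t_{\ell+1}$ and assume $f = a_1 \circ t_2 \circ \cdots \circ t_\ell \circ a_\ell$ begins and ends with an affine non-triangular map. The strategy is then to build, from products of $f$, $f^{-1}$, and triangular maps, a new automorphism whose affine length \emph{drops} to zero while remaining non-triangular-affine — or to directly observe that some such product has affine length exactly $1$ and therefore lies in $A \setminus B$. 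The case analysis is over $\ell \in \{1,2,3,4\}$. For $\ell = 1$ this is immediate: $f$ is already $b \circ a \circ b'$ with $a \in A \setminus B$, so conjugating/multiplying by $B$ gives an affine non-triangular element outright.

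For $\ell \in \{2,3,4\}$ the main device I expect to use is a length-reduction / cancellation argument in the amalgam: by choosing triangular elements cleverly and forming expressions such as $f \circ b \circ f^{-1}$ or $f \circ b \circ f$ with a well-chosen $b \in B$, one arranges that interior syllables fall back into $A \cap B$, collapsing the reduced word. The key algebraic input is that an affine map $a \in A \setminus B$ has the freedom, when flanked by triangular maps, to make the ``overlap'' with $B$ nontrivial for suitable parameter choices — this is the analogue, run in reverse, of the reduced-expression bookkeeping carried out in Cases 1--4 of the proof of Proposition~\ref{prop:non-maximality}. Concretely I would track how the leading affine syllables $a_1, a_\ell$ interact: since $\kk$ is an arbitrary field and the affine group acts highly transitively, I can select the intermediate $B$-elements to kill all but one affine syllable. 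I expect that for each fixed small $\ell$ a short explicit word in $f$ and $B$ produces an element of affine length $1$.

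The hard part will be handling $\ell = 4$ uniformly: there the reduced word is long enough that naive cancellation can stall, and one must verify that the genericity condition implicit in ``$a_i \notin B$'' does not obstruct the chosen substitution — i.e.\ that the triangular parameters making the collapse happen do not accidentally force some syllable to stay outside $A \cap B$ or, conversely, degenerate $f$ itself. Controlling this requires a careful examination of the degree and the linear parts of the $a_i$ and $t_i$, much as the property~\eqref{eq-def:wgp} was used to \emph{prevent} collapse in Proposition~\ref{prop:non-maximality}; here I would instead show that for \emph{every} $f$ of length at most $4$ no such obstruction can occur, so that the collapse always succeeds. Once an affine non-triangular element is secured in any case, the conclusion $\langle B, f\rangle = \Aut(\A^2_{\kk})$ follows from the maximality of $A \cap \GL_2$ in $\GL_2$ together with the amalgam generation, as above.
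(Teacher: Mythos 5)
Your overall framing is sound and matches the paper's: reduce $f$ to a normal form by absorbing triangular factors, then manufacture inside $\langle B,f\rangle$ an element of affine length $1$ (equivalently an element of $A\setminus B$), which together with $B$ generates everything by the Bruhat/amalgam argument. The case $\ell=1$ is indeed immediate. But for $\ell\in\{2,3,4\}$ your proposal stops exactly where the proof begins. The sentence ``I expect that for each fixed small $\ell$ a short explicit word in $f$ and $B$ produces an element of affine length $1$'' is the entire content of the proposition, and it is not established — in fact it is false as stated. A single product such as $f\circ b\circ f^{-1}$ does \emph{not} collapse to affine length $1$: if $f=a_1\circ t_2\circ\cdots\circ a_\ell$ and $b\in A\cap B$, then $a_\ell\circ b\circ a_\ell^{-1}$ lies in $A$ but generically not in $A\cap B$, so the word does not shorten at all; and if it does land in $A\cap B$, the best one gets in one step is a drop of the affine length by at most one or two, landing on an element that is still non-affine. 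No choice of a single $b$ turns a length-$4$ word into a length-$1$ word.

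What actually makes the argument work in the paper is a normal form $f=\sigma\circ i_1\circ\sigma\circ\cdots\circ i_{\ell-1}\circ\sigma$ with $i_j=(-x+p_j(y),y)$ involutions, together with the identity
$i\circ(x,y-1)\circ i\circ(-x,y+1)=(-x+(p(y)-p(y+1)),y)$,
which lowers $\deg p$ by one. The words one must form are not short: their length depends on the degrees of the $p_j$, and one argues by \emph{induction on the degree}, iterating the construction (e.g.\ $f\circ b_1\circ f\circ b_2$ for $\ell=2$, $f\circ b\circ f^{-1}\circ b'$ for $\ell=3$, $f\circ b\circ f^{-1}$ for $\ell=4$, each followed by an induction) until some $p_j$ reaches degree $1$ and the corresponding syllable falls into $A\cap B$, shortening the word. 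You correctly flag that ``controlling the obstruction'' is the hard part and that it requires tracking degrees, but you defer it entirely (``I would instead show that\dots''); since every quantitative claim of the proposition lives in that deferred step, the proposal as written has a genuine gap rather than being a complete alternative proof.
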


In order to prove the above proposition, it is useful to remark that we can  impose extra conditions on  the elements $t_1,\ldots,t_{n+1},a_1,\ldots,a_n$ appearing in a reduced expression \eqref{equ:mot} of an automorphism $f\in\Aut(\A^2_{\kk})$. We do it in Proposition \ref{prop:mots-reduits} below. First, we need to introduce some notations.

\begin{notation}
In the sequel, we  will denote, as in the proof of Proposition~\ref{prop:non-maximality}, by $\sigma$ the involution 
\[\sigma=(y,x)\in\Aut(\A^2_{\kk})\]
 and by $B_2$ the subgroup 
\[B_2 =\{ (ax+c, b'y+c')\in\Aut(\A^2_{\kk}) \mid a,c,b',c'\in\kk\}\subset A\cap B. \]  
Moreover, we denote by $I$ the  subset 
\[I=\{ (-x+p(y), y)\in\Aut(\A^2_{\kk})\mid p(y)\in\kk[y], \deg p(y)\geq2\}\subset B\setminus A\cap B .\]
Note that the elements of $I$ are all involutions.  
\end{notation}

\begin{lemma}\label{lem:B_2}
The followings hold:
\begin{enumerate}
\item $B_2\circ\sigma=\sigma\circ B_2$.
\item $B\setminus A\cap B=I\circ B_2=B_2\circ I=B_2\circ I\circ B_2$.
\item $A\setminus A\cap B\subset(A\cap B)\circ \sigma\circ (A\cap B$).
\end{enumerate}
\end{lemma}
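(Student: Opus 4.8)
The plan is to prove each of the three identities in Lemma \ref{lem:B_2} by direct computation, treating them as elementary statements about the explicit forms of the automorphisms involved. Recall that $\sigma = (y,x)$, that $B_2$ consists of the automorphisms $(ax+c,\,b'y+c')$ with $a,b' \in \kk^*$, that $I$ consists of the involutions $(-x+p(y),\,y)$ with $\deg p \geq 2$, and that $A \cap B = \{(ax+by+c,\,b'y+c') \mid a,b,b' \in \kk^*,\ c,c' \in \kk\}$. All the maps are given by explicit polynomial formulas, so each assertion reduces to comparing compositions.

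For the first statement, I would simply compute both composites. Taking $g = (ax+c,\,b'y+c') \in B_2$, the composite $g \circ \sigma$ sends $(x,y)$ to $g(y,x) = (ay+c,\,b'x+c')$, while $\sigma \circ g$ sends $(x,y)$ to $\sigma(ax+c,\,b'y+c') = (b'y+c',\,ax+c)$. These do not agree termwise, so the correct reading is the set equality $B_2 \circ \sigma = \sigma \circ B_2$: conjugating $B_2$ by $\sigma$ swaps the roles of the two coordinates, i.e.\ $\sigma \circ (ax+c,\,b'y+c') \circ \sigma = (b'x+c',\,ay+c)$, which again lies in $B_2$. Hence $\sigma \circ B_2 \circ \sigma = B_2$, which is exactly the asserted commutation of the two cosets.

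For the second statement, the key observation is that every element of $B \setminus (A \cap B)$ has the form $(ax + p(y),\,b'y+c')$ with $a,b' \in \kk^*$ and $\deg p \geq 2$ (the condition $\deg p \geq 2$ is precisely what excludes $A \cap B$). I would factor such an element explicitly: writing $p(y) = a\,\tilde p(y)$ and pulling the affine data into a $B_2$-factor on either side, one checks that a generic such automorphism can be written as $\iota \circ g$ with $\iota = (-x+q(y),\,y) \in I$ and $g \in B_2$, and symmetrically as $g' \circ \iota'$; the two-sided version $B_2 \circ I \circ B_2$ then follows since it contains both. The main point to verify is that the degree-$\geq 2$ condition on $p$ is preserved and that the leading coefficient can always be normalized so that an involution from $I$ appears; this is a routine manipulation of the polynomial $q$ via composition with dilations and translations in $y$.

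For the third statement, I would take a general non-triangular affine map $a = (ax+by+c,\,a'x+b'y+c') \in A \setminus (A\cap B)$, which means $a' \neq 0$ (the coefficient of $x$ in the second coordinate is nonzero, otherwise $a$ would be triangular). The strategy is to write $a = \beta_1 \circ \sigma \circ \beta_2$ with $\beta_1,\beta_2 \in A \cap B$: since $\sigma$ swaps the coordinates, precomposing and postcomposing with triangular affine maps allows one to realize the required off-diagonal structure, and the condition $a' \neq 0$ guarantees that the needed triangular factors are invertible. Concretely I would solve for $\beta_1,\beta_2$ by matching coefficients, which is a small linear algebra computation. The main obstacle across all three parts is purely bookkeeping: keeping track of how composition acts on the explicit polynomial coordinates (recall composition here is $g \circ f = (g_1(f), g_2(f))$) and confirming in the second part that the degree condition $\deg p \geq 2$ transfers correctly between the factorizations; none of the steps is conceptually deep, but the leading-coefficient normalizations must be done carefully.
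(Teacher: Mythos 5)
Your proposal is correct and follows essentially the same route as the paper: each part is handled by explicitly exhibiting the required factorizations (conjugating $B_2$ by $\sigma$ for (1), writing $(ax+p(y),\,b'y+c')$ as an element of $I$ composed with an element of $B_2$ for (2), and solving for the two triangular affine factors around $\sigma$ by matching coefficients for (3), using $a'\neq 0$). One minor correction to your recollection of the definitions: in $A\cap B$ the coefficient $b$ of $y$ in the first component is allowed to be zero --- only $a$ and $b'$ must be nonzero --- but this does not affect any of your arguments.
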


\begin{remark}\label{rem:sigma}
In particular, Assertion (3) implies that the group  generated by $\sigma$ and all triangular automorphisms is equal to the whole $\Aut(\A^2_{\kk})$, i.e.\ $\langle B, \sigma\rangle=\Aut(\A^2_{\kk})$.
\end{remark}

\begin{proof}
The first assertion is an easy consequence of the following equalities:
\[(ax+c,b'y+c')\circ\sigma=(ay+c,b'x+c')=\sigma\circ(b'x+c',ay+c).\]

Let us now prove the second assertion. It is easy to check that $I\circ B_2=B_2\circ I=B_2\circ I\circ B_2\subset B\setminus A\cap B$. On the other hand, let $f=(ax+p(y),b'y+c')$ be an element of $B\setminus A\cap B$. Then $f$ belongs to $I\circ B_2$, since we can write  
\[f=(-x+p(\frac{y-c'}{b'}),y)\circ(-ax,b'y+c').\]

It remains to prove the last assertion. For this, it suffices to write, given an element $f=(ax+by+c,a'x+b'y+c')$ of $A\setminus A\cap B$ with $a'\neq0$, that
\[f=(ax+by+c,a'x+b'y+c')=(x+\frac{a}{a'}y+c,y+c')\circ\sigma\circ(a'x+b'y, \frac{ba'-ab'}{a'}y).\]
\end{proof}

\begin{proposition}\label{prop:mots-reduits}Let $f\in\Aut(\A^2_{\kk})$ be an automorphism of affine length $\ell=n+1$ with $n\geq0$. Then there exist triangular automorphisms $\tau_1, \tau_2\in B$ and triangular involutions $i_1,\ldots, i_n\in I$ such that
\begin{equation}\label{equ:mot-reduit}\tag{$\ast\ast$}
f=\tau_1\circ\sigma\circ i_1\circ\sigma \circ \cdots\circ\sigma\circ i_n\circ\sigma\circ\tau_2.
\end{equation}
In particular, the inverse of $f$ is given by 
\[f^{-1}=\tau_2^{-1}\circ\sigma \circ i_n \circ \sigma \circ \cdots\circ\sigma\circ i_1\circ\sigma\circ\tau_1^{-1}.\]
\end{proposition}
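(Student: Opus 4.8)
The plan is to start from an arbitrary reduced expression $f=t_1\circ a_1\circ t_2\circ\cdots\circ a_n\circ t_{n+1}$ as in \eqref{equ:mot}, where the $a_i\in A\setminus A\cap B$ and the $t_i\in B$ (with $t_2,\ldots,t_n\notin A\cap B$), and to rewrite it into the normal form \eqref{equ:mot-reduit} using the three identities of Lemma \ref{lem:B_2}. First I would use Assertion (3) to replace each affine non-triangular factor $a_i$ by a product $c_i\circ\sigma\circ c_i'$ with $c_i,c_i'\in A\cap B$. After this substitution, $f$ becomes an alternating word in which the letters $\sigma$ are separated by triangular automorphisms (the original $t_i$'s merged with the surrounding $c_i,c_i'\in A\cap B$). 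Thus I obtain an expression
\[
f=\tau_1\circ\sigma\circ s_1\circ\sigma\circ s_2\circ\cdots\circ\sigma\circ s_{n-1}\circ\sigma\circ\tau_2,
\]
where $\tau_1,\tau_2\in B$ and the intermediate factors $s_j\in B$ (in fact in $A\cap B$). Here I must be careful that there are exactly $n$ occurrences of $\sigma$, matching the affine length $\ell=n+1$; this bookkeeping is where one checks that no cancellation lowers the length.

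The heart of the argument is then to promote each interior triangular factor $s_j$ into a \emph{triangular involution} $i_j\in I$. For this I would invoke Assertion (2) of Lemma \ref{lem:B_2}, which gives $B\setminus A\cap B=B_2\circ I\circ B_2$, so that any non-affine triangular element factors through an involution in $I$ flanked by elements of $B_2$; and Assertion (1), $B_2\circ\sigma=\sigma\circ B_2$, which lets me commute the $B_2$-parts past the neighbouring $\sigma$'s and absorb them into the adjacent factors. Working from, say, left to right, each $s_j$ is rewritten as $(\text{something in }B_2)\circ i_j\circ(\text{something in }B_2)$; the left $B_2$-factor is pushed through the preceding $\sigma$ and absorbed into the factor on its left, and similarly the right $B_2$-factor is pushed forward. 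Iterating, all the $B_2$-decorations are swept to the two ends and swallowed by $\tau_1$ and $\tau_2$, leaving precisely $f=\tau_1\circ\sigma\circ i_1\circ\sigma\circ\cdots\circ\sigma\circ i_n\circ\sigma\circ\tau_2$ with each $i_j\in I$.

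I expect the main obstacle to be exactly this commuting-and-absorbing bookkeeping: one must verify that each interior factor really lands in $B\setminus A\cap B$ (so that Assertion (2) applies and an honest involution $i_j\in I$ is produced), and that sweeping the $B_2$-factors through the $\sigma$'s via Assertion (1) never creates a new $\sigma$ or annihilates an existing one, so that the number of $\sigma$-letters stays equal to $n$ and the reduced form is genuinely reduced. The reducedness is what guarantees the affine length is preserved throughout and equals $n+1$; I would confirm it by noting that each $\sigma\circ i_j\circ\sigma$ contributes an affine non-triangular factor, matching Definition \ref{definition: affine length}. The final formula for $f^{-1}$ is then immediate: since each $i_j$ and $\sigma$ is an involution, inverting reverses the word and replaces $\tau_1,\tau_2$ by $\tau_1^{-1},\tau_2^{-1}$, giving
\[
f^{-1}=\tau_2^{-1}\circ\sigma\circ i_n\circ\sigma\circ\cdots\circ\sigma\circ i_1\circ\sigma\circ\tau_1^{-1},
\]
which is already in the desired normal form.
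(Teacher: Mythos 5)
Your proposal follows the paper's own proof exactly: start from a reduced expression, replace each affine non-triangular factor by $\sigma$ via Assertion (3) of Lemma \ref{lem:B_2}, then use Assertions (2) and (1) to turn the interior triangular factors into involutions of $I$ and sweep the $B_2$-decorations outward into $\tau_1,\tau_2$; the inversion formula is immediate since $\sigma$ and the $i_j$ are involutions. Two small slips worth fixing: since $\ell_A(f)=n+1$ the word contains $n+1$ (not $n$) occurrences of $\sigma$ separating $n$ interior factors, and those interior factors lie in $B\setminus(A\cap B)$, not in $A\cap B$ --- which is precisely why Assertion (2) applies, as you yourself note later.
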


\begin{proof}Let $f$ be an automorphism of affine length $\ell=n+1$. By definition,
\[f=t_1\circ a_1\circ t_2\circ\cdots\circ a_n\circ t_{n+1},\]
for some $a_1,\ldots,a_n\in A\setminus A\cap B$,  $t_1,t_{n+1}\in B$ and $t_2,\ldots,t_n\in B\setminus A\cap B$. 
Using Assertion (3) of Lemma \ref{lem:B_2}, we may replace every $a_i$ by  $\sigma$. The proposition then follows from Assertions (1) and (2) of Lemma \ref{lem:B_2}.
\end{proof}

We can now proceed to the proof of Proposition \ref{prop:ell<4}.

\begin{proof}[Proof of Proposition \ref{prop:ell<4}]

\underline{Case $\ell=1$}. Let $f\in B$ with $\ell_A(f)=1$. By Proposition \ref{prop:mots-reduits}, we can write $f=\tau_1\circ\sigma\circ\tau_2$ for some $\tau_1, \tau_2\in B$. Thus, $\langle B, f\rangle=\langle B, \sigma\rangle=\Aut(\A^2_{\kk})$ follows from Remark \ref{rem:sigma}.

The proofs for affine length $\ell=2,3,4$ will be based on explicit computations. In particular, it will be useful to observe that all $i=(-x+p(y),y)\in I$ satisfy that
\begin{equation}\label{equ:1}
 i\circ(x+1,y)\circ i=(x-1,y),
\end{equation}

\begin{equation}\label{equ:2}
 \sigma\circ i\circ(x+1,y)\circ i\circ\sigma=(x,y-1)
\end{equation}
and
\begin{equation}\label{equ:3}
 i\circ(x,y-1)\circ i\circ(-x,y+1)=(-x+(p(y)-p(y+1)),y).
\end{equation}

\underline{Case $\ell=2$}. Let  $f\in B$ with $\ell_A(f)=2$. By Proposition \ref{prop:mots-reduits}, we can
suppose that $f=\sigma\circ i\circ\sigma$ for some involution $i=(-x+p(y),y)\in I$. Consider the elements $b_1=\sigma\circ(x,y-1)\circ\sigma$ and $b_2=\sigma\circ(-x,y+1)\circ\sigma$ of $B_2$. Since 
\[f\circ b_1\circ f\circ b_2= \sigma\circ i\circ(x,y-1)\circ i\circ(-x,y+1)\circ\sigma,\] 
it follows from Equality \eqref{equ:3} above that the automorphism $\sigma\circ(-x+(p(y)-p(y+1),y)\circ\sigma$ belongs to $\langle B, f\rangle$. By induction, we thus obtain an element in $\langle B, f\rangle$ of the form $\sigma\circ(-x+q(y),y)\circ\sigma$ with $\deg(q)=1$. This element is in fact an element of  $A\setminus A\cap B$ and has therefore affine length $1$. This implies that $\langle B, f\rangle=\Aut(\A^2_{\kk})$.

\underline{Case $\ell=3$}. Let  $f\in B$ with $\ell_A(f)=3$. By Proposition \ref{prop:mots-reduits}, we can
suppose that $f=\sigma\circ i_1\circ\sigma\circ i_2\circ\sigma$ for some  $i_1=(-x+p_1(y),y), i_2=(-x+p_2(y),y)\in I$. We first use Equality \eqref{equ:2}, which implies that  
\begin{equation}\label{equ:4}
\sigma\circ i_2\circ\sigma\circ b\circ\sigma\circ i_2\circ\sigma=(x,y-1),
\end{equation}   
where $b$ denotes the element $b=\sigma\circ(x+1,y)\circ\sigma\in B_2$ .
Hence, denoting by $b'$ the element $b'=\sigma\circ(-x,y+1)\circ\sigma$ in $B_2$ and using Equalities \eqref{equ:3} and \eqref{equ:4}, we obtain that 
\begin{align*}
f\circ b\circ f^{-1}\circ b' &=\sigma\circ i_1\circ\sigma\circ i_2\circ\sigma\circ b\circ \sigma\circ i_2\circ\sigma\circ i_1\circ\sigma\circ b'\\
&=\sigma\circ i_1\circ(x,y-1)\circ i_1\circ\sigma\circ b'\\
&=\sigma\circ i_1\circ(x,y-1)\circ i_1\circ(-x,y+1)\circ\sigma\\
&=\sigma\circ(-x+(p_1(y)-p_1(y+1)),y)\circ\sigma
\end{align*}
is an element of affine length $2$ (or $1$ in the case where $\deg(p_1)=2$), which belongs to  $\langle B, f\rangle$. Consequently, $\langle B, f\rangle=\Aut(\A^2_{\kk})$.

\underline{Case $\ell=4$}. Let  $f\in B$ with $\ell_A(f)=4$. By Proposition \ref{prop:mots-reduits}, we can
suppose that $f=\sigma\circ i_1\circ\sigma\circ i_2\circ\sigma\circ i_3\circ\sigma$ for some  $i_j=(-x+p_j(y),y)\in I$, $j=1,2,3$. Letting $b=\sigma\circ(x+1,y)\circ\sigma$ as above, one get that
\begin{align*}
f\circ b\circ f^{-1} &= \sigma\circ i_1\circ\sigma\circ i_2\circ\sigma\circ i_3\circ\sigma\circ b\circ \sigma\circ i_3\circ\sigma\circ i_2\circ\sigma\circ i_1\circ\sigma\\
&=\sigma\circ i_1\circ\sigma\circ i_2\circ(x,y-1)\circ i_2\circ\sigma\circ i_1\circ\sigma\\
&= \sigma\circ i_1\circ\sigma\circ i_2\circ(x,y-1)\circ i_2\circ(-x,y+1)\circ(-x,y-1)\circ\sigma\circ i_1\circ\sigma\\
&= \sigma\circ i_1\circ\sigma\circ i_2'\circ(-x,y-1)\circ\sigma\circ i_1\circ\sigma\\
&= \sigma\circ i_1\circ\sigma\circ i_2'\circ\sigma\circ(x-1,-y)\circ i_1\circ\sigma\\
&= \sigma\circ i_1\circ\sigma\circ i_2'\circ\sigma\circ i_1'\circ(x+1,-y)\circ\sigma\\
&= \sigma\circ i_1\circ\sigma\circ i_2'\circ\sigma\circ i_1'\circ\sigma\circ(-x,y+1),
\end{align*}
where $i_2'=(-x+p_2'(y),y)$ and $i_1'=(-x+p_1'(y),y)$ for the polynomials $p_2'(y)=p_2(y)-p_2(y+1)$ and $p_1'(y)=p_1(-y)$, respectively. In particular, $\langle B, f\rangle$ contains the element $\sigma\circ i_1\circ\sigma\circ i_2'\circ\sigma\circ i_1'\circ\sigma$. Since $\deg(p_2')=\deg(p_2)-1$, we obtain by induction an element in $\langle B, f\rangle$ of the form $\sigma\circ i_1\circ\sigma\circ \widetilde{i_2}\circ\sigma\circ i_1'\circ\sigma$ with $\widetilde{i_2}=(-x+\widetilde{p_2}(y),y)$ and $\deg(\widetilde{p_2})=1$. Since $\sigma\circ \widetilde{i_2}\circ\sigma$ is an element of $A\setminus A\cap B$, the above $\sigma\circ i_1\circ\sigma\circ \widetilde{i_2}\circ\sigma\circ i_1'\circ\sigma$ is an automorphism of affine length $3$, and the proposition follows.
\end{proof}

To conclude, let us emphasize that, as pointed to us by S.~Lamy,  our results concerning the non-maximality of $B$ are related to those  of \cite{Furter-Lamy} about the existence of normal subgroups for the group $\SAut(\A^2_{\C})$  of automorphisms of the complex affine plane whose Jacobian determinant is equal to $1$. Indeed, the subgroup $\langle B, f\rangle$, generated by $B$ and a given automorphism $f$, is contained into the subgroup $B\circ\langle f\rangle_N=\{h\circ g\mid h\in B, g\in\langle f\rangle_N\}$, where $\langle f\rangle_N$ denotes the normal subgroup of $\Aut(\A^2_{\C})$ that is generated by $f$.

Combined with Proposition~\ref{prop:ell<4},  the above observation gives us a short proof of the following result.

\begin{theorem}[\cite{Furter-Lamy}*{Theorem 1}] If $f \in \SAut(\A^2_{\C} )$ is of affine length at most 4 and $f\neq\id$, then the normal subgroup $\langle f\rangle_N$ generated by $f$ in $\SAut(\A^2_{\C} )$ is equal to the whole group  $\SAut( \A^2_{\C} )$.
\end{theorem}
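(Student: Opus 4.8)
The plan is to deduce the theorem from Proposition~\ref{prop:ell<4} together with the elementary observation recorded just above the statement, reducing the whole matter to the perfectness of $\SAut(\A^2_\C)$. Write $N:=\langle f\rangle_N$ for the normal closure of $f$. The observation is simply that, $N$ being normal, the set $B\circ N=\{h\circ g\mid h\in B,\ g\in N\}$ is a subgroup; since it contains both $B$ and $f$, it contains $\langle B,f\rangle$. If $1\le\ell_A(f)\le 4$, Proposition~\ref{prop:ell<4} gives $\langle B,f\rangle=\Aut(\A^2_\C)$, whence $\Aut(\A^2_\C)=B\circ N$.

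Next I would descend to $\SAut(\A^2_\C)$. Since $\Jac$ is a homomorphism to the abelian group $\C^*$ and $\Jac(f)=1$, every conjugate of $f$ is unimodular, so $N\subseteq\SAut(\A^2_\C)$. Intersecting the equality $\Aut(\A^2_\C)=B\circ N$ with $\SAut(\A^2_\C)$ yields $\SAut(\A^2_\C)=B_S\circ N$, where $B_S:=B\cap\SAut(\A^2_\C)$ is the group of unimodular triangular maps. Consequently $\SAut(\A^2_\C)/N$ is a quotient of $B_S$, hence solvable, since $B_S\subseteq B=\Bcal_2$ and $\Bcal_2$ is solvable by Lemma~\ref{lemma:B_n is solvable}.

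The substantive ingredient is then that $\SAut(\A^2_\C)$ is perfect, for a group that is perfect and admits a solvable quotient by $N$ must satisfy $N=\SAut(\A^2_\C)$. To prove perfectness I would use that $\SAut(\A^2_\C)$ is generated by $A\cap\SAut(\A^2_\C)$ and $B\cap\SAut(\A^2_\C)$ (the unimodular part of the Jung--van der Kulk decomposition, the diagonal torus being pushed to the right since it lies in $A\cap B$), and that both lie in the derived group $D(\SAut(\A^2_\C))$. Indeed $\SL_2(\C)$ is perfect, while every elementary automorphism is a commutator of unimodular maps through the identity
\[ [(x+p(y),y),\,(x,y+1)]=(x+(p(y)-p(y-1)),\,y), \]
which, combined with the surjectivity of $p\mapsto p(y)-p(y-1)$ on $\C[y]$ (the two-dimensional incarnation of the operator $\Delta_i$ from the proof of Lemma~\ref{lemma:B_n is solvable}), realises every $(x+q(y),y)$, and symmetrically (after conjugating by the rotation $(y,-x)\in\SL_2(\C)$) every $(x,y+q(x))$, as such a commutator; all generators therefore lie in $D(\SAut(\A^2_\C))$.

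Two points remain, the second of which I expect to be the real difficulty. First, the degenerate case $\ell_A(f)=0$, i.e.\ $f\in B_S\smallsetminus\{\id\}$, is not covered by Proposition~\ref{prop:ell<4}; here I would instead produce inside $N$ a conjugate $h\circ f\circ h^{-1}$ with $h\in\SAut(\A^2_\C)$ of affine length $1$, whose affine length is then $1$ or $2$, and invoke the main case for that element. Second, and more delicately, the observation computes $N$ as a normal subgroup of $\Aut(\A^2_\C)$, whereas the statement asks for the normal closure taken inside $\SAut(\A^2_\C)$; reconciling the two amounts to checking that the torus-conjugates $\lambda\circ f\circ\lambda^{-1}$, with $\lambda=(\lambda x,y)$, already lie in the $\SAut(\A^2_\C)$-normal closure of $f$. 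Writing $\lambda=\mu\circ s$ with $\mu\in\SL_2(\C)\subseteq\SAut(\A^2_\C)$ and $s$ a scalar, this reduces to controlling scalar-conjugates of $f$, and it is precisely here that the argument requires genuine care; everything else is formal once Proposition~\ref{prop:ell<4} and the perfectness of $\SAut(\A^2_\C)$ are in hand.
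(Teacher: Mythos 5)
Your strategy coincides with the paper's for the first reduction: Proposition~\ref{prop:ell<4} gives $\langle B,f\rangle=\Aut(\A^2_{\C})$, hence $B\circ\langle f\rangle_N=\Aut(\A^2_{\C})$. From there the two arguments genuinely diverge. The paper stays explicit: it writes $(-y,x)=b\circ g$ with $b\in B$ unimodular and $g\in\langle f\rangle_N$ of affine length $1$, computes the commutator $[(x+1,y),g]$ in closed form, observes that it is a nontrivial \emph{triangular} element of $\langle f\rangle_N$, and concludes by citing Lemma 30 of \cite{Furter-Lamy} (the normal closure of a nontrivial triangular unimodular automorphism is everything); the same citation disposes of the case $\ell_A(f)=0$. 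You instead argue softly: from $\SAut(\A^2_{\C})=(B\cap\SAut(\A^2_{\C}))\circ N$ the quotient $\SAut(\A^2_{\C})/N$ is a quotient of a solvable group, and perfectness of $\SAut(\A^2_{\C})$ --- which you prove correctly via the identity $[(x+p(y),y),(x,y+1)]=(x+p(y)-p(y-1),y)$, the surjectivity of the difference operator, and the perfectness of $\SL_2(\C)$ --- forces $N=\SAut(\A^2_{\C})$. This is a legitimate alternative that replaces the appeal to Lemma 30 in the main case by a self-contained perfectness argument.

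The proposal is nevertheless incomplete at two points. First, your treatment of $\ell_A(f)=0$ is not only sketched but contains a false step: for $h$ of affine length $1$ the conjugate $h\circ f\circ h^{-1}$ need not have affine length $1$ or $2$ --- if $f$ is a translation, then $t_2\circ f\circ t_2^{-1}$ is again a translation for every $t_2\in B$, so $h\circ f\circ h^{-1}$ is triangular for \emph{every} $h$ of affine length $1$. One must choose $h$ according to the type of $f$ (say $h\in A\smallsetminus B$ when $f\in B\smallsetminus A$, and a product of a non-affine triangular map with $\sigma$ when $f\in A\cap B$), which requires a short case analysis you have not carried out; the paper avoids this entirely by quoting Lemma 30 of \cite{Furter-Lamy}. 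Second, the difficulty you flag at the end and leave open is real and is not optional: for $B\circ\langle f\rangle_N$ to be a subgroup, $\langle f\rangle_N$ must be normalized by $B$, hence must be the normal closure of $f$ in $\Aut(\A^2_{\C})$ (this is exactly how the paper defines it in the paragraph preceding the theorem), whereas the statement concerns the normal closure inside $\SAut(\A^2_{\C})$. Your argument, as written, only proves that the $\Aut(\A^2_{\C})$-normal closure equals $\SAut(\A^2_{\C})$; reducing to the $\SAut(\A^2_{\C})$-closure requires handling the scalar conjugates $(\lambda x,\lambda y)\circ f\circ(\lambda^{-1}x,\lambda^{-1}y)$, and a proof that ends by acknowledging that this "requires genuine care" has not established the stated theorem. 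You should either close this gap or note explicitly that what you obtain is the statement for the normal closure taken in $\Aut(\A^2_{\C})$.
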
 

\begin{proof} The case where $f$ is a triangular automorphism being easy to treat (see  \cite{Furter-Lamy}*{Lemma 30, p. 590}), suppose that $f \in \SAut(\A^2_{\C} )$ is of affine length at most 4 and at least 1. By Proposition~\ref{prop:ell<4}, we have $\langle B, f\rangle= \Aut ( \A^2_{\C} )$. Since the group $B \circ\langle f\rangle_N$ contains $B$ and $f$, we get  $B\circ\langle f\rangle_N =\Aut(\A^2_{\C})$. In particular, the element $(-y,x)$ can be written as $(-y,x) = b \circ g$ for some $b \in B$ and $g \in \langle f \rangle_N$. Consequently, $\langle f\rangle_N$ contains the element $g=b ^{-1}\circ (-y,x) $ which is of affine length $1$.

Remark that the Jacobian determinant of $b$ is equal to $1$. Therefore, we can write $b^{-1}=(ax+P(y), a^{-1}y+c)$ for some $a\in\C^{*}$, $c\in\C$ and $P(y)\in\C[y]$. Thus, $g$ is given by 
\[g=(-ay+P(x), a^{-1}x+c).\]
Next, we consider the translation $\tau=(x+1,y)$ and compute the commutator $[\tau,g]=\tau\circ g\circ\tau^{-1}\circ g^{-1}$, which is an element of $\langle f\rangle_N$.  Since
\begin{align*}
[\tau,g] &=(x+1,y)\circ (-ay+P(x), a^{-1}x+c)\circ (x+1,y)\circ (ay-ac, -a^{-1}x+a^{-1}P(ay-ac)) \\
&=(x-P(ay-ac)+P(ay-ac-1)+1,y-a^{-1})
\end{align*}  
is a triangular automorphism different from the identity, the theorem follows directly from \cite{Furter-Lamy}*{Lemma 30, p. 590}.
\end{proof}

On the other hand, we can retrieve the fact that  the Jonquières subgroup is not a maximal subgroup of $\Aut(\A^2_{\C})$ as a corollary of \cite{Furter-Lamy}*{Theorem 2}. Indeed, the latter  produces  elements $f\in\SAut(\A^2_{\C})$ of affine length $\ell_A(f)=7$ such that $\langle f\rangle_N\neq\SAut(\A^2_{\C})$. In particular, by  \cite{Furter-Lamy}*{Theorem 1} above, the identity is the only  automorphism of affine length smaller than or equal to 4 contained in $\langle f\rangle_N$. Therefore, since  $\langle B, f\rangle \subset B\circ\langle f\rangle_N$, the subgroup $\langle B, f\rangle$ does not contain any non-triangular automorphism of affine length $\leq4$. Consequently, $\langle B, f\rangle$ is a strict subgroup of $\Aut(\A^2_{\C})$.

%%%%%%%%%%%%%%%%%%%%%%%%%%%%%%%%%%%%%%%%%%%%%%%%%%%%%%%%%%%%%%%%%%%%%%%%%%%%%%%%%%%%%%

\begin{bibdiv}
\begin{biblist}

\bib{Bass}{article}{
   author={Bass, Hyman},
   title={A nontriangular action of ${\bf G}_{a}$ on ${\bf A}^{3}$},
   journal={J. Pure Appl. Algebra},
   volume={33},
   date={1984},
   number={1},
   pages={1--5},
}

\bib{BCW}{article}{
   author={Bass, Hyman},
   author={Connell, Edwin H.},
   author={Wright, David},
   title={The {J}acobian conjecture: reduction of degree and formal expansion of the inverse},
   journal = {Bull. Amer. Math. Soc. (N.S.)},
   volume = {7},
   year = {1982},
   number = {2},
   pages = {287--330},
}

\bib{Beauville}{article}{
   author={Beauville, Arnaud},
   title={Finite subgroups of ${\rm PGL}_2(K)$},
   conference={
      title={Vector bundles and complex geometry},
   },
   book={
      series={Contemp. Math.},
      volume={522},
      publisher={Amer. Math. Soc., Providence, RI},
   },
   date={2010},
   pages={23--29},
}

\bib{BEE}{article}{
   author={Berest, Yuri},
   author={Eshmatov, Alimjon},
   author={Eshmatov, Farkhod},
   title={Dixmier groups and Borel subgroups},
   journal={Adv. Math.},
   volume={286},
   date={2016},
   pages={387--429},
}

\bib{Edo}{article}{
   author={Edo, {\'E}ric},
   title={Closed subgroups of the polynomial automorphism group containing the affine subgroup},
    journal={Transform. Groups},
   date={2016},
   doi={10.1007/s00031-016-9412-7}
}

\bib{Edo-Poloni}{article}{
   author={Edo, {\'E}ric},
   author={Poloni, Pierre-Marie},
   title={On the closure of the tame automorphism group of affine
   three-space},
   journal={Int. Math. Res. Not. IMRN},
   date={2015},
   number={19},
   pages={9736--9750},
}

\bib{Essen}{book}{
   author={van den Essen, Arno},
   title={Polynomial automorphisms and the Jacobian conjecture},
   series={Progress in Mathematics},
   volume={190},
   publisher={Birkh\"auser Verlag, Basel},
   date={2000},
   pages={xviii+329},
}

\bib{Friedland-Milnor}{article}{
   author={Friedland, Shmuel},
   author={Milnor, John},
   title={Dynamical properties of plane polynomial automorphisms},
   journal={Ergodic Theory Dynam. Systems},
   volume={9},
   date={1989},
   number={1},
   pages={67--99},
}

\bib{Fur02}{article}{
   author={Furter, Jean-Philippe},
   title={On the length of polynomial automorphisms of the affine plane},
   journal={Math. Ann.},
   volume={322},
   date={2002},
   number={2},
   pages={401--411},
}

%\bib{Fur07}{article}{
%    AUTHOR = {Furter, Jean-Philippe},
%     TITLE = {Jet groups},
%   JOURNAL = {J. Algebra},
%   VOLUME = {315},
%     YEAR = {2007},
%    NUMBER = {2},
%     PAGES = {720--737},
%}

%\bib{Fur09}{article}{
%   author={Furter, Jean-Philippe},
%   title={Plane polynomial automorphisms of fixed multidegree},
%   journal={Math. Ann.},
%   volume={343},
%   date={2009},
%   number={4},
%   pages={901--920},
%}

%\bib{Fur15}{article}{
%   author={Furter, Jean-Philippe},
% title={Polynomial composition rigidity and plane polynomial
%   automorphisms},
%   journal={J. Lond. Math. Soc. (2)},
%   volume={91},
%   date={2015},
%   number={1},
%   pages={180--202},
%}

\bib{Furter-Lamy}{article}{
   author={Furter, Jean-Philippe},
   author={Lamy, St{\'e}phane},
   title={Normal subgroup generated by a plane polynomial automorphism},
   journal={Transform. Groups},
   volume={15},
   date={2010},
   number={3},
   pages={577--610},
}

\bib{Hum81}{book}{
   author={Humphreys, James E.},
   title={Linear algebraic groups},
   note={Graduate Texts in Mathematics, No. 21},
   publisher={Springer-Verlag, New York-Heidelberg},
   date={1975},
   pages={xiv+247},
}

\bib{Jung}{article}{
   author={Jung, Heinrich W. E.},
   title={\"Uber ganze birationale Transformationen der Ebene},
   language={German},
   journal={J. Reine Angew. Math.},
   volume={184},
   date={1942},
   pages={161--174},
}

\bib{Kaliman}{article}{
   author={Kaliman, Shulim},
   title={Isotopic embeddings of affine algebraic varieties into ${\bf
   C}^n$},
   conference={
      title={The Madison Symposium on Complex Analysis},
      address={Madison, WI},
      date={1991},
   },
   book={
      series={Contemp. Math.},
      volume={137},
      publisher={Amer. Math. Soc., Providence, RI},
   },
   date={1992},
   pages={291--295},
}

\bib{Kulk}{article}{
   author={van der Kulk, W.},
   title={On polynomial rings in two variables},
   journal={Nieuw Arch. Wiskunde (3)},
   volume={1},
   date={1953},
   pages={33--41},
}

\bib{Lamy}{article}{
   author={Lamy, St{\'e}phane},
   title={L'alternative de Tits pour ${\rm Aut}[{\mathbb C}^2]$},
   language={French, with French summary},
   journal={J. Algebra},
   volume={239},
   date={2001},
   number={2},
   pages={413--437},
}

\bib{Malcev}{article}{
   author={Mal{\cprime}cev, Anatoli\u{\i} Ivanovich},
   title={On certain classes of infinite soluble groups},
   journal={Amer. Math. Soc. Translations (2)},
   volume={2},
   date={1956},
   pages={1--21},
}

\bib{Martelo-Ribon}{article}{
   author={Martelo, Mitchael},
   author={Rib{\'o}n, Javier},
   title={Derived length of solvable groups of local diffeomorphisms},
   journal={Math. Ann.},
   volume={358},
   date={2014},
   number={3-4},
   pages={701--728},
}

\bib{Nagao}{article}{
   author={Nagao, Hirosi},
   title={On ${\rm GL}(2,\,K[x])$},
   journal={J. Inst. Polytech. Osaka City Univ. Ser. A},
   volume={10},
   date={1959},
   pages={117--121},
}

\bib{Nagata}{book}{
   author={Nagata, Masayoshi},
   title={On automorphism group of $k[x,\,y]$},
   note={Department of Mathematics, Kyoto University, Lectures in
   Mathematics, No. 5},
   publisher={Kinokuniya Book-Store Co., Ltd., Tokyo},
   date={1972},
   pages={v+53},
}

\bib{New}{article}{
    AUTHOR = {Newman, Mike F.},
     TITLE = {The soluble length of soluble linear groups},
   JOURNAL = {Math. Z.},
    VOLUME = {126},
      YEAR = {1972},
     PAGES = {59--70},
}

\bib{Popov87}{article}{
   author={Popov, Vladimir L.},
   title={On actions of ${\bf G}_a$ on ${\bf A}^n$},
   conference={
      title={Algebraic groups Utrecht 1986},
   },
   book={
      series={Lecture Notes in Math.},
      volume={1271},
      publisher={Springer, Berlin},
   },
   date={1987},
   pages={237--242},
}

\bib{Popov2014}{article}{
   author={Popov, Vladimir L.},
   title={On infinite dimensional algebraic transformation groups},
   journal={Transform. Groups},
   volume={19},
   date={2014},
   number={2},
   pages={549--568},
 }

%\bib{Ramanujam}{article}{
%   author={Ramanujam, Chakravarthi Padmanabhan},
%   title={A note on automorphism groups of algebraic varieties},
%   journal={Math. Ann.},
%   volume={156},
%   date={1964},
%   pages={25--33},
%}

\bib{Serre}{book}{
    AUTHOR = {Serre, Jean-Pierre},
     TITLE = {Trees},
    SERIES = {Springer Monographs in Mathematics},
      NOTE = {Translated from the French original by John Stillwell,
              Corrected 2nd printing of the 1980 English translation},
 PUBLISHER = {Springer-Verlag, Berlin},
      YEAR = {2003},
     PAGES = {x+142},
}

\bib{Shafarevich1}{article}{
   author={Shafarevich, Igor R.},
   title={On some infinite-dimensional groups},
   journal={Rend. Mat. e Appl. (5)},
   volume={25},
   date={1966},
   number={1-2},
   pages={208--212},
}

\bib{Shafarevich2}{article}{
   author={Shafarevich, Igor R.},
   title={On some infinite-dimensional groups. II},
   language={Russian},
   journal={Izv. Akad. Nauk SSSR Ser. Mat.},
   volume={45},
   date={1981},
   number={1},
   pages={214--226, 240},
}

\bib{Wehrfritz}{book}{
   author={Wehrfritz, B. A. F.},
   title={Infinite linear groups. An account of the group-theoretic
   properties of infinite groups of matrices},
   note={Ergebnisse der Matematik und ihrer Grenzgebiete, Band 76},
   publisher={Springer-Verlag, New York-Heidelberg},
   date={1973},
   pages={xiv+229},
}

\bib{Zassenhaus}{article}{ 
    AUTHOR = {Zassenhaus, Hans},
     TITLE = {Beweis eines satzes \"uber diskrete gruppen},
   JOURNAL = {Abh. Math. Sem. Univ. Hamburg},
    VOLUME = {12},
      YEAR = {1937},
    NUMBER = {1},
     PAGES = {289--312},
}

\end{biblist}
\end{bibdiv}

\end{document}